\newtheorem{theo}{Theorem}[section]
\newtheorem{lemm}[theo]{Lemma}
\newtheorem{prop}[theo]{Proposition}
\newtheorem{coro}[theo]{Corollary}
\newtheorem*{theo*}{Theorem}
\theoremstyle{definition}
\newtheorem{defi}[theo]{Definition}
\newtheorem{cons}[theo]{Construction}
\newtheorem{assu}[theo]{Assumption}
\newtheorem{nota}[theo]{Notation}
\newtheorem{example}[theo]{Example}
\newtheorem{rem}[theo]{Remark}
\newtheorem{conj}[theo]{Conjecture}
\numberwithin{equation}{section}
\newcommand{\op}{^{\mathrm{op}}}
\newcommand{\cat}{\mathbf}
\newcommand{\ucat}{\mathrm} 
\newcommand{\on}{\operatorname}
\newcommand{\id}{\mathrm{id}}
\newcommand{\Fun}{\on{Fun}}
\newcommand{\map}{\on{map}}
\newcommand{\ZZ}{\mathbb{Z}} 
\renewcommand{\SS}{\mathbb{S}} 
\newcommand{\Mod}{\cat{Mod}}
\newcommand{\Ab}{\ucat{Ab}}
\newcommand{\THH}{\mathrm{THH}} 
\newcommand{\HH}{\mathrm{HH}} 
\newcommand{\HML}{\mathrm{HML}} 
\newcommand{\Ch}{\ucat{Ch}} 
\newcommand{\Alg}{\cat{Alg}}
\newcommand{\Sp}{\cat{Sp}}
\newcommand{\bQ}{\overline{Q}}
\newcommand{\FF}{\mathbb F}
\title[A multiplicative comparison of HML and THH]{A multiplicative comparison of MacLane homology and topological Hochschild homology}
\author{Geoffroy Horel}
\author{Maxime Ramzi}
\thanks{Geoffroy Horel was partially supported by the project ANR-16-CE40-0003 ChroK funded by the Agence Nationale pour la Recherche. Maxime Ramzi was supported by École Normale Supérieure under the status of ``fonctionnaire stagiaire'', and was hosted by the GeoTop center at K\o benhavns Universitet during the last stages of redaction}
\begin{document}

\address{Université Sorbonne Paris Nord, LAGA, CNRS (UMR 7539), 93430, Villetaneuse, France.}
\address{\'Ecole normale sup\'erieure, DMA, CNRS (UMR 8553), 45 rue d'Ulm, 75230 Paris Cedex 05, France}
\email{horel@math.univ-paris13.fr}
\address{Department of Mathematical Sciences, University of Copenhagen, Universitetsparken 5, DK-2100 Copenhagen, Denmark. }
\email{maxime.ramzi@ens.psl.eu}

\begin{abstract}
Let $Q$ denote MacLane's $Q$-construction, and $\otimes$ denote the smash product of spectra. In this paper we construct an equivalence $Q(R)\simeq \ZZ\otimes R$ in the category of $A_\infty$ ring spectra for any ring $R$, thus proving a conjecture of Fiedorowicz, Pirashvili, Schw\"{a}nzl, Vogt and Waldhausen. More precisely, we construct a symmetric monoidal structure on $Q$ (in the $\infty$-categorical sense) extending the usual monoidal structure, for which we prove an equivalence $Q(-)\simeq \ZZ\otimes -$ as symmetric monoidal functors. From this result, we obtain a new proof of the equivalence $\HML(R,M)\simeq \THH(R,M)$ originally proved by Pirashvili and Waldaushen. This equivalence is in fact made symmetric monoidal, and so it also provides a proof of the equivalence $\HML(R)\simeq \THH(R)$ as $E_\infty$ ring spectra, when $R$ is a commutative ring. 
\end{abstract}

\keywords{}

\maketitle

\section*{Introduction}
In 1992, Pirashvili and Waldhausen \cite{PirWald} proved  that MacLane homology and topological Hochschild homology of a ring, functors introduced earlier by MacLane and Bökstedt respectively, were isomorphic. 

They actually proved it for both homology theories with various coefficients; although their proof worked only at the homology level, and not at the level of the underlying spectra or chain complexes -- indeed it proceeds by showing that both homologies are universal $\delta$-functors for a certain functor. 

In 1995, Fiedorowicz, Pirashvili, Schwänzl, Vogt and Waldhausen \cite{FPSVW} outlined a ``brave new algebra'' proof of the same result, which relied on an analysis of the underlying spectra. The key ingredient in their strategy was the following conjecture.

\begin{conj}
For any discrete ring $R$, there is an equivalence $Q(R)\simeq \ZZ\otimes R$ as $A_\infty$ ring spectra.
\end{conj}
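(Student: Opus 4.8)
The plan is to upgrade $Q$ to a colimit-preserving, (lax) symmetric monoidal functor out of the derived $\infty$-category $\mathcal{D}(\ZZ)$ of abelian groups (equivalently, of $\ZZ$-module spectra), and then to pin it down by its value on the unit. First I would extend MacLane's $Q$-construction from $\Ab$ to a functor $Q\colon\mathcal{D}(\ZZ)\to\Sp$ — by left-deriving the simplicial/cubical construction and then stabilizing — and record its formal properties: $Q$ is reduced, preserves filtered and sifted colimits (immediate from the finitary nature of the construction) and is \emph{exact}, i.e.\ sends cofiber sequences of complexes to cofiber sequences of spectra. This exactness is the homotopical avatar of the classical fact that MacLane homology is a (stable) homology theory; it is really where the word ``stable'' in ``stable homology of Eilenberg--MacLane spaces'' lives. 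Together these properties make $Q$ colimit-preserving, so that by the Eilenberg--Watts/Schwede--Shipley recognition, evaluation at $\ZZ$ is an equivalence $\Fun^{L}(\mathcal{D}(\ZZ),\Sp)\xrightarrow{\ \sim\ }\Mod_{\ZZ}$ (with $F(\ZZ)$ a module over $\ZZ\simeq\on{End}_{\mathcal{D}(\ZZ)}(\ZZ)$). Thus, as a plain functor, $Q$ is determined by the $\ZZ$-module $Q(\ZZ)$, and the underlying-spectrum half of the statement becomes the assertion that $Q(\ZZ)\simeq\ZZ\otimes\ZZ$ as $\ZZ$-modules — the classical computation of the homology of the spaces $K(\ZZ,n)$, packaged at the level of spectra rather than homotopy groups.

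The substantive work is the \emph{monoidal} enhancement, which I expect to be the main obstacle: MacLane's $Q$ is defined by an explicit combinatorial construction, and classically its multiplicativity is only controlled to low order (Jibladze--Pirashvili, FPSVW). The plan is to build a genuine $\infty$-categorical symmetric monoidal structure, \emph{extending} the classical one, by first producing it on a small symmetric monoidal subcategory where everything is computable — the finitely generated free abelian groups, or the perfect complexes — and then transporting it along a Day-convolution/colimit extension. The target is to present $Q$ as a colimit-preserving \emph{lax} symmetric monoidal functor $\mathcal{D}(\ZZ)\to\Sp$ (refining further to a strong symmetric monoidal functor $\mathcal{D}(\ZZ)\to\Mod_{\ZZ\otimes\ZZ}$). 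Under the symmetric monoidal identification $\Fun^{L}(\mathcal{D}(\ZZ),\Sp)\simeq\mathcal{D}(\ZZ)$ given by Day convolution — for which the forgetful functor $\mathcal{D}(\ZZ)\to\Sp$ is the unit — such a $Q$ is exactly an $E_\infty$-$\ZZ$-algebra whose underlying $\ZZ$-module is $Q(\ZZ)$. The whole theorem is thereby reduced to: $Q(\ZZ)\simeq\ZZ\otimes\ZZ$ as $E_\infty$-$\ZZ$-algebras.

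To finish, observe that $\ZZ\otimes\ZZ$, regarded as a $\ZZ$-algebra via one of the two factors, is the base change of the $E_\infty$-ring $\ZZ$ along the unit $\SS\to\ZZ$; in particular it corepresents $B\mapsto\on{Map}_{E_\infty}(\ZZ,B)$ on connective $E_\infty$-$\ZZ$-algebras, so the structure map $\ZZ\to Q(\ZZ)$ classifies a canonical map of $E_\infty$-$\ZZ$-algebras $\ZZ\otimes\ZZ\to Q(\ZZ)$. It remains to see this is an equivalence; since both sides are connective with $\pi_0=\ZZ$ and with finitely generated homotopy in each degree — and both abstractly compute the stable integral homology of the $K(\ZZ,n)$ — it suffices to check surjectivity on homotopy groups, which one gets either by a direct generation argument or, more robustly, by identifying the transformation with the Eilenberg--MacLane stabilization $\Sigma^\infty\Omega^\infty\Rightarrow\on{id}$ smashed with $\ZZ$, which is an equivalence on each $HA$ by Freudenthal. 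Evaluating the resulting symmetric monoidal equivalence $Q(-)\simeq\ZZ\otimes-$ at an associative algebra object $R\in\Alg(\Ab)$, i.e.\ an ordinary ring, then yields $Q(R)\simeq\ZZ\otimes R$ as $A_\infty$- (equivalently $E_1$-) ring spectra, and as $E_\infty$-rings when $R$ is commutative.
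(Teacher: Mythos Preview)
Your outline shares its skeleton with the paper's proof: extend $Q$ to connective $\ZZ$-modules, identify colimit-preserving functors via a Morita/Eilenberg--Watts equivalence, and then reduce to a statement at the unit. The decisive difference is the choice of \emph{target}. You take $Q\colon\mathcal{D}(\ZZ)\to\Sp$; the paper keeps the target as $\Mod_\ZZ$. This looks innocuous, but it is exactly where the symmetric monoidal structure on $Q$ comes from, and where your argument has a real gap.

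In $\Fun^{rex}(\Mod_\ZZ^{c,\omega},\Mod_\ZZ)$ with its localized Day convolution, the unit is the rexification of $\ZZ[\,-\,]$ (the \emph{$\ZZ$-linear} corepresentable at the unit), and Johnson--McCarthy identifies this rexification with $Q$. Thus $Q$ \emph{is} the tensor unit: it carries a unique commutative algebra structure, and under the symmetric monoidal equivalence $\Fun^{rex}(\Mod_\ZZ^{c,\omega},\Mod_\ZZ)\simeq\Mod_{\ZZ\otimes\ZZ}$ it must correspond to the unit $\ZZ\otimes\ZZ$. Both the construction of the $E_\infty$-structure on $Q$ and the identification $Q\simeq\ZZ\otimes-$ fall out of this single observation. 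In your setup, with target $\Sp$, the Day unit of $\Fun^{L}(\mathcal{D}(\ZZ),\Sp)$ is $D_1(\Sigma^\infty_+\Omega^\infty)$, i.e.\ the \emph{forgetful} functor, not $Q$. So you genuinely need a separate construction of the lax symmetric monoidal structure on $Q$, and your plan for this (``produce it on a small subcategory and transport by Day extension'') is precisely the step the paper carries out and you leave unaddressed.

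There is a second, dependent gap in your endgame. Granting an $E_\infty$-$\ZZ$-algebra structure on $Q(\ZZ)$, you produce a map $\ZZ\otimes\ZZ\to Q(\ZZ)$ from the universal property and propose to check it is an equivalence because ``both sides compute stable homology of $K(\ZZ,n)$''. But knowing $\pi_*$ abstractly agree does not show \emph{this} map is an isomorphism on $\pi_*$; the map depends on the unspecified $E_\infty$-structure on $Q(\ZZ)$, and an $E_\infty$-$\ZZ$-algebra self-map of $\ZZ\otimes\ZZ$ need not be an equivalence (such maps are parametrized by $\on{Map}_{E_\infty}(\ZZ,\ZZ\otimes\ZZ)$, which is not contractible). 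The Freudenthal suggestion would require first identifying your map with $\ZZ\otimes(\Sigma^\infty\Omega^\infty\Rightarrow\id)$ evaluated at $H\ZZ$, which again presupposes control of the monoidal structure you have not built. The paper sidesteps all of this: once $Q$ is the unit of the functor category, there is nothing left to check.
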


Here $Q$ denotes MacLane's $Q$-construction, which is used to define MacLane homology, and $\otimes$ is our notation for the smash-product of spectra. Unfortunately, the authors of \cite{FPSVW} do not prove this conjecture and in fact, it seems that they were not completely convinced that it was true at the time (see \cite[Remark 3.9]{FPSVW}). From this conjecture, the equivalence between MacLane homology and topological Hochschild homology at the spectrum level follows essentially formally, via general base-change arguments for Hochschild homology. 

The purpose of this paper is to prove that conjecture, and conclude that there is an equivalence between topological Hochschild homology and MacLane homology at the spectrum level. In fact, our proof even provides an equivalence of $E_\infty$ ring spectra when $R$ is assumed to be commutative. The $E_\infty$-structure on $\ZZ\otimes R$ is clear when $R$ is commutative, and on $Q(R)$ it comes from an $E_\infty$-monoidal structure on the functor $Q$ itself, first constructed by Richter \cite{richter} and which we obtain in a different way through the universal property of $Q$. This $E_\infty$-equivalence implies an equivalence of the multiplicative structures on MacLane homology and topological Hochschild homology when $R$ is commutative, which was not known even at the level of homotopy groups, as far as the authors know.

The paper is organized as follows:
\begin{itemize}
    \item In Section \ref{section : HH}, we review Hochschild homology in an $\infty$-categorical setting, in order to establish the desired base-change formula which allows us to go from the conjecture to the result about MacLane homology and topological Hochschild homology.
    \item In Section \ref{section : QC}, we quickly review the definition and basic properties of the $Q$-construction, which we need for later work.
    \item In Section \ref{section : QCGD}, we review a theorem of Johnson and McCarthy \cite{johnsonlinearization} which interprets the $Q$-construction as a first Goodwillie derivative.
    \item  In Section \ref{sec : sym mon structure}, we construct natural symmetric monoidal structures on various categories which allow us to define a symmetric monoidal structure on the $Q$-construction.
    \item In Section \ref{section : sym mon equivalence}, we compare two symmetric monoidal structures in order to finally prove the conjecture in question.
    \item  In Section \ref{section : Richter}, we compare our symmetric monoidal structure on $Q$ to the one constructed by Richter in \cite{richter}.
    \item Section \ref{section : conclusion} is the conclusion, where we deduce the results about MacLane homology and topological Hochschild homology.
    \item In Appendix \ref{appendix : remark}, we explain Remark 3.9 in \cite{FPSVW}.
	\item In Appendix \ref{appendix : monoidal fibration} we prove a technical result that we needed in our proof and which could potentially be of independant interest.
\end{itemize}

\section*{Acknowledgements}
The authors are grateful to Birgit Richter and Teimuraz Pirashvili for a helpful conversation which led to the remark in Appendix A; to Thomas Nikolaus for spotting a mistake in one of the proofs in an earlier version of this work and for noting that a slight modification of the argument in Appendix A could be applied to odd primes as well; to Yonatan Harpaz for suggesting a simpler proof of the main result in Appendix B; and to Brice Le Grignou for suggesting a helpful reference for Section \ref{section : Richter}. We also thank the anonymous referee for making several useful comments about the first draft of this paper.

Finally the second named author would like to thank the first named author for suggesting this subject for his master's thesis.

\section*{Conventions}

By default the categorical terminology (limit, colimit, adjoints, etc.) will refer to the $\infty$-categorical notion. We will sometimes have to work in model categories, in that case we will use the standard terminology (homotopy limits and colimits, left/right Quillen functor, etc.) We write $\infty$-categories in boldface and $1$-categories in regular.

For $\cat{C}$ a category (recall that in our convention this means an $\infty$-category) and $S$ a collection of morphisms in $\cat{C}$, we denote by $S^{-1}\cat{C}$ the localization of $\cat{C}$ at $S$. If $\ucat{M}$ is a model category we denote by $W^{-1}\ucat{M}$ the localization of $\ucat{M}$ at its weak equivalences (the class $W$ will always be clear from context). For instance for $W$ the class of quasi-isomorphisms of chain complexes of abelian groups, we have an equivalence
\[W^{-1}\Ch_*(\ZZ)\simeq \Mod_{\ZZ}\]
where $\Mod_{\ZZ}$ denotes the category of $\ZZ$-modules in spectra.

We use $\otimes$ to denote the smash-product of spectra. We do not make a distinction notationally between a discrete or dg-ring and its associated Eilenberg-MacLane spectrum.
 
\section{Hochschild homology}\label{section : HH}

In this section, we review the definition of Hochschild homology with coefficients in a bimodule in an $\infty$-categorical setting. We denote by $\mathcal{AB}$ the $2$-colored operad whose algebras are pairs $(A,M)$ consisting of an algebra and a bimodule. We denote by $\mathcal{AB}^{\otimes}$ the category over $\ucat{Fin}_*$ constructed from $\mathcal{AB}$ using \cite[Construction 2.1.1.7]{HA}.

We denote by $\mathcal{AB}^{\otimes}_{act}$ the subcategory of $\mathcal{AB}^\otimes$ where we only allow morphisms that send the base point and nothing else to the base point. More concretely, this category can be described as follows.

\begin{cons}
The objects of $\mathcal{AB}^{\otimes}_{act}$ are pairs $(S,U)$ where $S$ is a finite set and $U$ is a subset of $S$. 

A morphism in $\mathcal{AB}^{\otimes}_{act}$ from $(S,U)$ to $(T,V)$ is the data of
\begin{itemize}
\item A map $f:S\to T$ whose restriction to $U$ is a bijection from $U$ to $V$.
\item A total order on the fiber $f^{-1}(t)$ for each $t$ in $T$.
\end{itemize}

Given a morphism $f:(S,U)\to (T,V)$ and $g:(T,V)\to (X,W)$, the composition is constructed as follows.
\begin{itemize}
\item The map of sets $g\circ f:S\to X$ is simply the composition of $g$ and $f$.
\item The total order on $(g\circ f)^{-1}(x)$ is given by the following concatenation of ordered sets
\[(g\circ f)^{-1}(x)=\star_{t\in g^{-1}(x)}f^{-1}(t).\]
\end{itemize}
\end{cons}

Given an $\mathcal{AB}$-algebra in a symmetric monoidal $\infty$-category $\cat{C}$, there is an induced functor
\[\mathcal{AB}^{\otimes}_{act}\to \cat{C}_{act}^{\otimes}\]
that we can compose with the functor $\cat{C}_{act}^{\otimes}\to \cat{C}$ (this is similar to \cite[Definition III.2.3]{nikolaustopological}) to produce a functor $\mathcal{AB}^{\otimes}_{act}\to \cat{C}$. Informally, if we think of an $\mathcal{AB}$-algebra as a pair $(A,M)$ with $A$ an associative algebra and $M$ a bimodule, this functor sends an object $(S,U)$ of $\mathcal{AB}^\otimes_{act}$ to the tensor product $A^{\otimes S-U}\otimes M^{\otimes U}$. A morphism $f:(S,U)\to (T,V)$ induces the map 
\[\overline{f}:A^{\otimes S-U}\otimes M^{\otimes U}\to A^{\otimes T-V}\otimes M^{\otimes V}\]
given by multiplying the copies of $A$ and $M$ using the algebra structure of $A$, the bimodule structure of $M$ and the data of the linear order on the fibers of $f$.

\begin{rem}
If $\cat{C}^{\otimes}$ is a symmetric monoidal $1$-category, the informal description above is in fact an accurate description of this functor.
\end{rem}

\begin{cons}
Now, we construct a functor
\[\chi:\Delta\op\to\mathcal{AB}^{\otimes}_{act}.\]
For this purpose it will be convenient to identify $\Delta\op$ with the category whose objects are totally ordered sets $[n]=\{0,1,\ldots,n+1\}$ and morphisms are order preserving maps that preserve the minimal and maximal element. With this description, the functor $\chi$ can be constructed as follows.
\begin{enumerate}
\item On objects, we set $\chi([n])=(\{0,1,\ldots,n\},0)$.
\item To a map $f:[n]\to[m]$, we assign a map $\chi(f): (\{0,1,\ldots,n\},0)\to (\{0,1,\ldots,m\},0)$ as follows :
\begin{align*}
\chi(f)(i)=f(i)&\;\,\textrm{if}\;\,f(i)\neq m+1\\
\chi(f)(i)=0&\;\,\textrm{if}\;\,f(i)= m+1.\\
\end{align*}
\item For $j\in\{1,\ldots,m\}$, the linear order on $\chi(f)^{-1}(j)=f^{-1}(j)$ is the obvious one. The linear order on $\chi(f)^{-1}(0)$ is the concatenation $ f^{-1}(m+1)\star f^{-1}(0)$
\end{enumerate}
\end{cons}

\begin{defi}
Given an $\mathcal{AB}$-algebra $(A,M)$ in a symmetric monoidal category $\cat C^{\otimes}$, the \emph{cyclic bar construction} of $(A,M)$ denoted $B^{cy}(A,M)$ is the following composite
\[\Delta\op\xrightarrow{\chi}\mathcal{AB}^{\otimes}_{act}\xrightarrow{(A,M)}\cat{C}^{\otimes}_{act}\to \cat{C}.\]
If the colimit of the cyclic bar construction of $(A,M)$ exists in $\cat{C}$ we call it the \emph{Hochschild homology} of $A$ with coefficients in $M$ and denote it by $\HH(A,M)$.
\end{defi}

\begin{rem}
Contrary to what the name suggests, the cyclic bar construction does not extend in general to a cyclic object unless we use $A$ as a bimodule of coefficients.
\end{rem}

\begin{nota}
We follow the standard convention and write $\THH(A,M)$ instead of $\HH(A,M)$ if $\cat{C}$ is the category of spectra.
\end{nota}
\begin{rem}
Assume that $\cat{C}$ is actually a symmetric monoidal $1$-category. In that case, a $\mathcal{AB}$-algebra is simply a pair $(A,M)$ with $A$ an associative algebra in $\cat{C}$ and $M$ a bimodule. The cyclic bar construction that we construct is the usual simplicial object $\Delta\op\to\cat{C}$ sending $[n]$ to $M\otimes A^{\otimes n}$ with all the face maps but the last given by multiplying two adjacent factors and the last face map given by multiplying the factor $M$ with the last copy of $A$ using the left $A$-module structure on $M$ (see for instance \cite[Section 2.1]{FPSVW} for a precise definition of this simplicial object).

Now, assume that our symmetric monoidal category $\cat{C}^{\otimes}$ is obtained by localizing a symmetric monoidal model category $(\ucat{D},\otimes)$ at its weak equivalences. In this situation we have a symmetric monoidal localization functor $\ucat{D}^c\to \cat C$, where $\ucat D^c$ denotes the full subcategory of $\ucat D$ spanned by cofibrant objects \cite[Proposition 4.1.7.4., Example 4.1.7.6.]{HA}. Then we can consider the following commutative diagram
\[
\xymatrix{
\cat{Alg}_{\mathcal{AB}}(\ucat{D}^{c,\otimes})\ar[r]^{B^{cy}}\ar[d]&\Fun(\Delta\op,\ucat{D}^c)\ar[d]\ar[rd]^{\mathrm{hocolim}}&\\
\cat{Alg}_{\mathcal{AB}}(\cat{C}^{\otimes})\ar[r]_{B^{cy}}&\Fun(\Delta\op,\cat{C})\ar[r]_-{\mathrm{colim}}&\cat{C}\\
}
\]
in which the vertical arrows are induced by the symmetric monoidal localization functor $\ucat{D}^{c,\otimes}\to\cat{C}^\otimes$. The commutation of the square is immediate and the commutation of the triangle is the definition of the $\mathrm{hocolim}$ functor. From this diagram, we see that, given an $\mathcal{AB}$-algebra $(A,M)$ in $\ucat{D}^c$, Hochschild homology in our sense of its image in $\cat{Alg}_{\mathcal{AB}}(\cat{C}^\otimes)$ is given by the homotopy colimit of its cyclic bar construction $B^{cy}(A,M)$. This implies that our construction is a generalization of the classical definition of topological Hochschild homology as given for example in \cite[Chapter IX, Definition 2.1]{EKMM} in the case of EKMM spectra or in \cite[Section 4.1]{ShipleySymmetric} in the case of symmetric spectra.
\end{rem}

\begin{prop}
Assume that $\cat{C}^{\otimes}$ is a symmetric monoidal category with sifted colimits and that the symmetric monoidal structure commutes with sifted colimits in each variable. Then the Hochschild homology functor can be promoted to a symmetric monoidal functor
\[\Alg_{\mathcal{AB}}(\cat{C}^{\otimes})\to\cat{C}.\]
\end{prop}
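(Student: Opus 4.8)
The plan is to exhibit $\HH$ as a composite of symmetric monoidal functors. By construction it is the composite
\[
\Alg_{\mathcal{AB}}(\cat C^\otimes)\xrightarrow{\ B^{cy}\ }\Fun(\Delta\op,\cat C)\xrightarrow{\ \on{colim}\ }\cat C ,
\]
where $B^{cy}$ sends an $\mathcal{AB}$-algebra to its cyclic bar construction and $\on{colim}$ is the colimit (geometric realisation) functor, which exists by the first hypothesis. We endow $\Fun(\Delta\op,\cat C)$ with the \emph{pointwise} symmetric monoidal structure, and it suffices to show that both functors above are symmetric monoidal: their composite then refines $\HH$, since its underlying functor is $\HH$ by construction.

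The functor $\on{colim}\colon\Fun(\Delta\op,\cat C)\to\cat C$ is left adjoint to the constant-diagram functor $\cat C\to\Fun(\Delta\op,\cat C)$, which is (strong) symmetric monoidal for the pointwise structure; hence $\on{colim}$ carries a canonical oplax symmetric monoidal structure, whose comparison maps are $\on{colim}(X_\bullet\otimes Y_\bullet)\to\on{colim}(X_\bullet)\otimes\on{colim}(Y_\bullet)$ and $\on{colim}(\mathbf 1)\to\mathbf 1$. Because $\Delta\op$ is sifted — so that it is weakly contractible and the diagonal $\Delta\op\to\Delta\op\times\Delta\op$ is cofinal — and because by the second hypothesis $\otimes$ preserves sifted colimits separately in each variable, a Fubini argument identifies $\on{colim}(X_\bullet)\otimes\on{colim}(Y_\bullet)$ with $\on{colim}_{\Delta\op\times\Delta\op}(X_m\otimes Y_n)$ and then, using cofinality of the diagonal, with $\on{colim}_{[n]\in\Delta\op}(X_n\otimes Y_n)=\on{colim}(X_\bullet\otimes Y_\bullet)$; similarly $\on{colim}(\mathbf 1)\simeq\mathbf 1$. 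So these comparison maps are equivalences and $\on{colim}$ is symmetric monoidal. This is the standard fact that geometric realisation is symmetric monoidal under the stated hypotheses (see \cite{HA}), and it is the only step where the hypotheses of the proposition enter.

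It remains to make $B^{cy}$ symmetric monoidal. It factors as
\[
\Alg_{\mathcal{AB}}(\cat C^\otimes)\xrightarrow{\ \Psi\ }\Fun(\mathcal{AB}^\otimes_{act},\cat C)\xrightarrow{\ \chi^*\ }\Fun(\Delta\op,\cat C),
\]
where $\Psi$ assigns to $(A,M)$ the composite $\mathcal{AB}^\otimes_{act}\xrightarrow{(A,M)}\cat C^\otimes_{act}\to\cat C$ occurring in the definition of $B^{cy}$, and $\chi^*$ is restriction along $\chi$. For the pointwise structures, restriction along any functor is symmetric monoidal, so $\chi^*$ is symmetric monoidal and the whole problem is concentrated in $\Psi$. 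Equip $\mathcal{AB}^\otimes_{act}$ and $\cat C^\otimes_{act}$ with the symmetric monoidal structures given by concatenation of tuples. With respect to these, the functor $\cat C^\otimes_{act}\to\cat C$ is symmetric monoidal, and an $\mathcal{AB}$-algebra, regarded as a map of operads, restricts on active morphisms to a symmetric monoidal functor $\mathcal{AB}^\otimes_{act}\to\cat C^\otimes_{act}$, naturally in the algebra; while the canonical symmetric monoidal structure on $\Alg_{\mathcal{AB}}(\cat C^\otimes)$ computes tensor products of $\mathcal{AB}$-algebras pointwise, $(A,M)\otimes(A',M')=(A\otimes A',M\otimes M')$. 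Combining these, the comparison map of $\Psi$ on an object $(S,U)$ is the re-sorting equivalence
\[
(A\otimes A')^{\otimes (S\setminus U)}\otimes (M\otimes M')^{\otimes U}\ \simeq\ \bigl(A^{\otimes (S\setminus U)}\otimes M^{\otimes U}\bigr)\otimes\bigl((A')^{\otimes (S\setminus U)}\otimes (M')^{\otimes U}\bigr),
\]
which is exactly the structure supplied by the symmetric monoidal functor $\cat C^\otimes_{act}\to\cat C$; one concludes that $\Psi$, and hence $B^{cy}$, is symmetric monoidal, which completes the argument.

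I expect the genuinely delicate step to be this last one. On objects and on the shuffle maps the compatibility of $\Psi$ with tensor products is transparent, but promoting it to an equivalence of symmetric monoidal functors $\Alg_{\mathcal{AB}}(\cat C^\otimes)\to\Fun(\mathcal{AB}^\otimes_{act},\cat C)$ forces one to engage with the $\infty$-operadic definitions: one must verify (i) that the functor $\cat C^\otimes_{act}\to\cat C$ admits a symmetric monoidal structure for the concatenation structure on its source, and (ii) that the symmetric monoidal structure on $\Alg_{\mathcal{AB}}(\cat C^\otimes)$, transported along the restriction-to-active-morphisms functor, is compatible with that concatenation structure. Neither point is hard in isolation, but assembling them coherently is where the work lies; everything else is formal.
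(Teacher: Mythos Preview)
Your proposal is correct and follows essentially the same route as the paper. The paper's proof simply writes down the factorization
\[
\Alg_{\mathcal{AB}}(\cat{C}^{\otimes})\simeq\Fun^{\otimes}(\mathcal{AB}^{\otimes}_{act},\cat{C}^{\otimes})\to \Fun(\mathcal{AB}^{\otimes}_{act},\cat{C})\to\Fun(\Delta\op,\cat{C})\xrightarrow{\on{colim}}\cat{C}
\]
and asserts each arrow is symmetric monoidal; your map $\Psi$ is exactly the composite of the first two arrows, and the identification $\Alg_{\mathcal{AB}}(\cat{C}^{\otimes})\simeq\Fun^{\otimes}(\mathcal{AB}^{\otimes}_{act},\cat{C}^{\otimes})$ (for the concatenation structure on the source) is precisely the packaging of your points (i) and (ii), which you correctly flag as the place where the real work lies.
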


\begin{proof}
Indeed this functor can be written as the following composition of symmetric monoidal functors
\[\cat{Alg}_{\mathcal{AB}}(\cat{C}^{\otimes})\simeq\Fun^{\otimes}(\mathcal{AB}^{\otimes}_{act},\cat{C}^{\otimes})\to \Fun(\mathcal{AB}^{\otimes}_{act},\cat{C})\to\Fun(\Delta\op,\cat{C})\xrightarrow{\on{colim}}\cat{C}.\]
\end{proof}

\subsection{Base change formula for Hochschild homology}\label{subsection : base change formula}

In this subsection, we specialize to categories of left modules $\Mod_R$ for $R$ a commutative ring spectrum. In that case, we write $\HH_R$ for the Hochschild homology $R$-module in order to keep track of the base ring.

Let $\alpha:R\to S$ be a map of commutative rings. In that case, we have the change of scalars adjunction
\[\alpha_!:\Mod_R\leftrightarrows\Mod_S:\alpha^*\]
with $\alpha_!$ symmetric monoidal and $\alpha^*$ lax symmetric monoidal.

Let $A$ be an associative algebra in $\Mod_R$ and $M$ be an $\alpha_!(A)$-bimodule. Since $\alpha^*$ is lax monoidal, $\alpha^*(M)$ inherits an $\alpha^*(\alpha_!A)$-bimodule structure. We can restrict along the map of associative algebras $A\to \alpha^*(\alpha_!A)$ and view the pair $(A,\alpha^*M)$ as an object in $\cat{Alg}_{\mathcal{AB}}(\Mod_R)$. In that case, we can consider the following composition
\begin{equation}\label{eqn : base change simplicial}
\alpha_![B^{cy}(A,\alpha^*M)]\xrightarrow{\simeq} B^{cy}(\alpha_!A,\alpha_!\alpha^*M)\to B^{cy}(\alpha_!A,M)
\end{equation}
where the first map comes from the fact that $\alpha_!$ is symmetric monoidal and the second map comes from the fact that the counit map $\alpha_!\alpha^*M\to M$ is a map of $\alpha_!A$-bimodules. Taking colimits (using the fact that $\alpha_!$ commutes with colimits), we obtain a map
\begin{equation}\label{eqn : base change}
\alpha_!\HH_R(A,\alpha^*M)\to \HH_S(\alpha_!A,M).
\end{equation}

\begin{prop}\label{prop : base change}
The map 
\begin{equation}\label{eqn : base change adjoint}
\HH_R(A,\alpha^*M)\to \alpha^*\HH_S(\alpha_!A,M)
\end{equation}
adjoint to the map \ref{eqn : base change} is an equivalence.
\end{prop}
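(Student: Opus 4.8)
The plan is to unwind the map \eqref{eqn : base change adjoint} into the colimit over $\Delta\op$ of a levelwise equivalence of simplicial objects. The preliminary observation is that the forgetful functor $\alpha^*\colon\Mod_S\to\Mod_R$ preserves all colimits, colimits of modules over a commutative ring spectrum being computed on underlying spectra; in particular $\alpha^*\HH_S(\alpha_!A,M)\simeq\on{colim}_{\Delta\op}\alpha^*B^{cy}(\alpha_!A,M)$. Since $\alpha_!$ also preserves colimits, unravelling the construction of \eqref{eqn : base change} out of \eqref{eqn : base change simplicial}, together with the formula for the map adjoint to \eqref{eqn : base change}, identifies \eqref{eqn : base change adjoint} with the colimit over $\Delta\op$ of the map of simplicial objects in $\Mod_R$
\[
B^{cy}(A,\alpha^*M)\longrightarrow \alpha^*\alpha_!B^{cy}(A,\alpha^*M)\longrightarrow \alpha^*B^{cy}(\alpha_!A,M),
\]
where the first map is, levelwise, the unit of $\alpha_!\dashv\alpha^*$ and the second is $\alpha^*$ applied to \eqref{eqn : base change simplicial}. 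As the functor $\on{colim}\colon\Fun(\Delta\op,\Mod_R)\to\Mod_R$ preserves equivalences, it is enough to prove that this composite is an equivalence in each simplicial degree $k$.

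In degree $k$ the composite is a map $\alpha^*M\otimes_R A^{\otimes_R k}\to\alpha^*\bigl(M\otimes_S(\alpha_!A)^{\otimes_S k}\bigr)$. Using that $\alpha_!$ is symmetric monoidal to rewrite $(\alpha_!A)^{\otimes_S k}\simeq\alpha_!\bigl(A^{\otimes_R k}\bigr)$, and then the projection formula $\alpha^*\bigl(N\otimes_S\alpha_!Z\bigr)\simeq\alpha^*N\otimes_R Z$ for $N\in\Mod_S$, $Z\in\Mod_R$ (which can be checked directly: both sides preserve colimits in $Z$ and agree for $Z=R$, since $\alpha_!Z=S\otimes_R Z$), the target is identified with $\alpha^*M\otimes_R A^{\otimes_R k}$. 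The degree-$k$ part of \eqref{eqn : base change simplicial} is the symmetric-monoidality equivalence $\alpha_!(\alpha^*M\otimes_R A^{\otimes_R k})\simeq\alpha_!\alpha^*M\otimes_S(\alpha_!A)^{\otimes_S k}$ followed by the counit $\alpha_!\alpha^*M\to M$ smashed with the identity of $(\alpha_!A)^{\otimes_S k}$. Applying $\alpha^*$ and chasing the identifications above, the contribution of the counit becomes $\alpha^*$ of the counit at $M$ smashed with an identity, and this cancels against the unit by the triangle identities; thus the degree-$k$ map is the identity of $\alpha^*M\otimes_R A^{\otimes_R k}$, hence an equivalence, and the proof is complete.

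I expect the main obstacle to be precisely the coherence bookkeeping in the previous paragraph: one must carefully match the symmetric-monoidality constraints of $\alpha_!$, the unit and counit of $\alpha_!\dashv\alpha^*$, and the projection formula in order to genuinely identify the degree-$k$ map with an equivalence (rather than only with a map between equivalent objects). Should this become unpleasant, one can instead argue as follows. Both functors $M\mapsto\HH_R(A,\alpha^*M)$ and $M\mapsto\alpha^*\HH_S(\alpha_!A,M)$ from the category $\mathrm{BiMod}_{\alpha_!A}(\Mod_S)$ of $\alpha_!A$-bimodules in $\Mod_S$ to $\Mod_R$ preserve colimits (again because $\alpha^*$ does, and $B^{cy}$ is colimit-preserving in the bimodule variable), and \eqref{eqn : base change adjoint} is natural in $M$; since $\mathrm{BiMod}_{\alpha_!A}(\Mod_S)$ is generated under colimits by the free bimodule $\alpha_!A\otimes_S\alpha_!A\simeq\alpha_!(A\otimes_R A\op)$, it then suffices to treat $M=\alpha_!(A\otimes_R A\op)$. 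For this $M$ the $A$-bimodule $\alpha^*M$ is free on $\alpha^*S$, so both sides are readily computed to be $\alpha^*S\otimes_R A$, and one is left only to check that \eqref{eqn : base change adjoint} is invertible on this single generator.
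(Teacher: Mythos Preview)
Your main argument is correct and essentially the same as the paper's: both reduce to a levelwise statement on the simplicial cyclic bar constructions (using that $\alpha^*$ preserves colimits), and both identify the degree-$k$ map with the projection-formula equivalence $\alpha^*M\otimes_R Y\to\alpha^*(M\otimes_S\alpha_!Y)$, proved by the same ``reduce to $Y=R$ via colimit preservation'' argument. The paper simply asserts that the degree-$n$ map \emph{is} the projection-formula map and stops there; your extra chase through the triangle identities to pin it down as the identity, and your alternative generator argument, are not needed.
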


\begin{proof}
We start with a simple observation. Take $X$ in $\Mod_S$ and $Y$ in $\Mod_R$. Then we can consider the following composite
\[
\alpha_!(\alpha^*X\otimes_R Y)\xrightarrow{\simeq}\alpha_!\alpha^*X\otimes_S\alpha_!Y\to X\otimes_S\alpha_!Y\]
where the second map is induced by the counit of the adjunction. We claim that the adjoint map
\[\alpha^*X\otimes_R Y\to \alpha^*(X\otimes_S\alpha_!Y)\]
is an equivalence. Indeed, this map is natural in $Y$ and both functors of $Y$ preserve colimits, therefore, it suffices to prove it for $Y=R$ which is straightforward.

We can now prove the proposition. The map \ref{eqn : base change adjoint} is obtained by applying the colimit functor to the map
\begin{equation}\label{eqn : base change simplicial adjoint}
B^{cy}(A,\alpha^*M)\to\alpha^* B^{cy}(\alpha_!A,M)
\end{equation}
that is adjoint to the map \ref{eqn : base change simplicial} (this uses the fact that the functor $\alpha^*$ preserves colimits). Therefore, it is enough to prove that the map \ref{eqn : base change simplicial adjoint} is degreewise an equivalence. In simplicial degree $n$, this map is simply the map
\[\alpha^*M\otimes_RA^{\otimes n}\to M\otimes_S\alpha^*(\alpha_!A^{\otimes n})\]
that we considered in the first paragraph of this proof and showed is an equivalence.
\end{proof}
Now we want to make the proposition above functorial and symmetric monoidal in the data. We introduce a category $\cat{Alg}_{\mathcal{AB}}(R,S)$. Informally, this category is the category of pairs $(A,M)$ with $A$ an associative algebra over $R$ and $M$ a bimodule over $\alpha_!(A)$. The precise definition is via the following cartesian square in $\cat{Pr}^L$:
\[
\xymatrix{
\cat{Alg}_{\mathcal{AB}}(R,S)\ar[r]\ar[d]&\cat{Alg}_{\mathcal{AB}}(\Mod_S)\ar[d]^{res}\\
\cat{Alg}_{\mathcal{A}ss}(\Mod_R)\ar[r]_{\alpha_!}&\cat{Alg}_{\mathcal{A}ss}(\Mod_S)
}
\]
Recall that $\cat{Pr}^L$ is the category of presentable categories \cite[Definition 5.5.3.1]{HTT}. 

The functor $res$ is induced by the obvious inclusion of operads $\mathcal{A}ss\to \mathcal{AB}$. Hence, a slightly more formal definition of an object of $\cat{Alg}_{\mathcal{AB}}(R,S)$ is a triple $(A,B,M)$, with $(B,M)$ an $\mathcal{AB}$-algebra in the category of $S$-modules, $A$, an associative algebra in $R$-modules and the data of an equivalence $\alpha_!(A)\simeq B$. Note that the functor $res$ is indeed a left adjoint as shown in the following proposition.

\begin{prop}
For any presentable symmetric monoidal category $\cat{C}$, the restriction functor 
\[res: \Alg_{\mathcal{AB}}(\cat{C})\to\Alg_{\mathcal{A}ss}(\cat{C})\]
has a right adjoint.
\end{prop}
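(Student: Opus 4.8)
The plan is to construct the right adjoint by hand. It should send an associative algebra $A$ to the pair $(A,t_A)$, where $t_A$ denotes the terminal $A$-bimodule in $\cat C$. The idea is that a morphism $(B,N)\to(A,t_A)$ in $\Alg_{\mathcal{AB}}(\cat C)$ consists of a map of algebras $B\to A$ together with a map of $B$-bimodules from $N$ into the restriction of $t_A$ along $B\to A$; since restriction of scalars preserves terminal objects, this restricted bimodule is again terminal, so the bimodule datum contributes nothing and we are left with just $\map_{\Alg_{\mathcal{A}ss}(\cat C)}(B,A)=\map_{\Alg_{\mathcal{A}ss}(\cat C)}(res(B,N),A)$, which is exactly the defining property of a right adjoint to $res$.

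To make this precise I would first recall the fibrational structure of $res$. The inclusion of operads $\mathcal{A}ss\to\mathcal{AB}$ exhibits $res\colon\Alg_{\mathcal{AB}}(\cat C)\to\Alg_{\mathcal{A}ss}(\cat C)$ as a Cartesian fibration, via the module-theoretic machinery of \cite[\S 4.3]{HA}: the fiber over an algebra $A$ is the category ${}_A\mathrm{BMod}_A(\cat C)$ of $A$-bimodules in $\cat C$, and the transition functor along a map $\phi\colon A\to A'$ is restriction of scalars $\phi^\ast\colon{}_{A'}\mathrm{BMod}_{A'}(\cat C)\to{}_A\mathrm{BMod}_A(\cat C)$; indeed, a morphism $(A,M)\to(A',M')$ lying over $\phi$ is precisely the datum of a bimodule map $M\to\phi^\ast M'$, and it is $res$-Cartesian exactly when this map is an equivalence. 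Next I would record two properties of the fibers. Since $\cat C$ is presentable symmetric monoidal, each ${}_A\mathrm{BMod}_A(\cat C)$ is presentable, in particular it has a terminal object $t_A$. And $\phi^\ast$ admits a left adjoint, namely extension of scalars (using that the relevant relative tensor products exist in the presentable setting, \cite[\S 4.4]{HA}), hence $\phi^\ast$ preserves limits, so $\phi^\ast t_{A'}\simeq t_A$.

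With these in hand, set $G(A)=(A,t_A)$. By \cite[Proposition 5.2.4.2]{HTT}, to conclude that $res$ admits a right adjoint with this value it suffices to check that for every $A\in\Alg_{\mathcal{A}ss}(\cat C)$ the presheaf $(B,N)\mapsto\map_{\Alg_{\mathcal{A}ss}(\cat C)}(res(B,N),A)=\map_{\Alg_{\mathcal{A}ss}(\cat C)}(B,A)$ is represented by $(A,t_A)$. Since $res$ is a Cartesian fibration, the map of spaces $\map_{\Alg_{\mathcal{AB}}(\cat C)}((B,N),(A,t_A))\to\map_{\Alg_{\mathcal{A}ss}(\cat C)}(B,A)$ induced by $res$ has, over a morphism $\phi\colon B\to A$, fiber equivalent to $\map_{{}_B\mathrm{BMod}_B(\cat C)}(N,\phi^\ast t_A)$; as $\phi^\ast t_A\simeq t_B$ is terminal, this fiber is contractible. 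Hence the map is an equivalence, which is exactly the asserted representability.

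I expect the main difficulty to be bookkeeping around this fibrational picture: getting the variance right — it is the restriction-of-scalars transition functors that are relevant, so $res$ is Cartesian (not coCartesian) — and invoking the correct form of the module-theoretic results of \cite{HA}. By contrast, the naive alternative of applying an adjoint functor theorem, i.e. showing that $res$ preserves colimits and is accessible, looks unpleasant, since colimits of $\mathcal{AB}$-algebras (like colimits of associative algebras) are not computed on underlying objects; so I would avoid that route in favour of the explicit construction above.
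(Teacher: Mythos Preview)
Your proposal is correct and matches the paper's approach: both recognize that the right adjoint is the section of the Cartesian fibration $res$ picking the terminal object in each fiber, and both reduce to checking this fiberwise terminality suffices. The only cosmetic difference is that the paper packages the construction of the section via \cite[Proposition 2.4.4.9]{HTT} and then invokes \cite[Proposition 5.2.4.3]{HTT} (the section/right-adjoint criterion for Cartesian fibrations), whereas you verify representability directly via \cite[Proposition 5.2.4.2]{HTT}; your explicit identification of why the transition functors preserve terminal objects (they are right adjoints to extension of scalars) is exactly what underlies the paper's appeal to $res$ being a presentable fibration.
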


\begin{proof}
The functor $res$ is a presentable fibration in the sense of \cite[Definition 5.5.3.2]{HTT}. The analogous situation with $\mathcal{AB}$ replaced by the operad for algebras and left modules is treated in \cite[Corollary 4.2.3.7]{HA}. The case of algebras and bimodules is completely analogous. In particular, it is both a cartesian and a cocartesian fibration. Using \cite[Proposition 2.4.4.9]{HTT}, we can construct a section $\sigma$ of $res$ such that, for each $A\in\Alg_{\mathcal{A}ss}(\cat{C})$, the object $\sigma(A)$ is a terminal object in $res^{-1}(A)$. Then, \cite[Proposition 5.2.4.3]{HTT} asserts that this section is a right adjoint to $res$.
\end{proof}

Observe moreover, that the functor $\alpha_!$ and $res$ are symmetric monoidal functors. It follows that the category $\Alg_{\mathcal{AB}}(R,S)$ inherits a symmetric monoidal structure. 

We shall denote by $\overline{\alpha}_!$ the top horizontal functor in the above square. Informally, we have $\overline{\alpha}_!(A,M)=(\alpha_!(A),M)$. We can also construct a left adjoint functor
\[\beta:\cat{Alg}_{\mathcal{AB}}(\Mod_R)\to\cat{Alg}_{\mathcal{AB}}(R,S).\]
The functor $\beta$ comes from the following commutative square in $\cat{Pr}^L$, using the universal property of pullbacks
\[
\xymatrix{
\cat{Alg}_{\mathcal{AB}}(\Mod_R)\ar[r]^{\alpha_!}\ar[d]_{res}&\cat{Alg}_{\mathcal{AB}}(\Mod_S)\ar[d]^{res}\\
\cat{Alg}_{\mathcal{A}ss}(\Mod_R)\ar[r]_{\alpha_!}&\cat{Alg}_{\mathcal{A}ss}(\Mod_S)
}
\]
Observe, moreover, that the functor $\beta$ inherits the structure of a symmetric monoidal functor from the symmetric monoidal structure on all the functors in the above square. We denote by $\overline{\alpha}^*$ the right adjoint to $\beta$, this is a lax monoidal functor. Informally, it is given by $\overline{\alpha}^*(A,M)=(A,\alpha^*(M))$. This descriptions follows from the equivalence $\cat{Pr}^L\simeq (\cat{Pr}^R)^{op}$.

Now, we can state precisely, the base change formula for Hochschild homology. Let us denote by $\HH_R$ (resp. $\HH_S$) the Hochschild homology functor in $\Mod_R$ (resp. in $\Mod_S$). We have two lax monoidal functors from $\Alg_{\mathcal{AB}}(R,S)$ to $\Mod_R$. One is given by $\alpha^*\circ \HH_S\circ \overline{\alpha}_!$ and the other by $\HH_R\circ \overline{\alpha}^*$.

\begin{theo}\label{theo : base change}
There is a weak equivalence of lax symmetric monoidal functors 
\[ \HH_R\circ \overline{\alpha}^*\xrightarrow{\sim} \alpha^*\circ \HH_S\circ \overline{\alpha}_!\]
\end{theo}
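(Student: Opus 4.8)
The plan is to upgrade the pointwise equivalence of Proposition \ref{prop : base change} to an equivalence of lax symmetric monoidal functors by exhibiting both functors as composites of (lax) symmetric monoidal functors and identifying the comparison map \ref{eqn : base change adjoint} with one that is manifestly natural and compatible with the lax structures. First I would record that $\overline{\alpha}_!$ and $\beta$ are symmetric monoidal and $\overline{\alpha}^*$, $\alpha^*$ are lax symmetric monoidal (already established in the excerpt), and that $\HH_R$, $\HH_S$ are lax symmetric monoidal by the Proposition preceding Subsection \ref{subsection : base change formula}, applied to $\cat C = \Mod_R$ and $\Mod_S$ (both have sifted colimits and a symmetric monoidal structure commuting with them in each variable). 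Hence both $\HH_R\circ\overline{\alpha}^*$ and $\alpha^*\circ\HH_S\circ\overline{\alpha}_!$ are lax symmetric monoidal functors $\Alg_{\mathcal{AB}}(R,S)\to\Mod_R$, so the statement at least typechecks.

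The key step is to construct the natural transformation \emph{as} a lax symmetric monoidal transformation rather than just a levelwise map, and here I would work simplicially. Recall from the proof of the Proposition preceding this subsection that $\HH$ factors as $\Alg_{\mathcal{AB}}(\cat C^\otimes)\simeq\Fun^\otimes(\mathcal{AB}^\otimes_{act},\cat C^\otimes)\to\Fun(\mathcal{AB}^\otimes_{act},\cat C)\to\Fun(\Delta\op,\cat C)\xrightarrow{\on{colim}}\cat C$, all symmetric monoidal. Precomposition with $\chi\colon\Delta\op\to\mathcal{AB}^\otimes_{act}$ and the forgetful functors are symmetric monoidal; the only lax (non-strong) step is $\on{colim}\colon\Fun(\Delta\op,\cat C)\to\cat C$, which is symmetric monoidal when $\cat C$ has sifted colimits compatible with $\otimes$. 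So it suffices to produce a lax symmetric monoidal natural transformation at the level of simplicial objects, i.e. a transformation $B^{cy}_R\circ\overline{\alpha}^*\Rightarrow \alpha^*\circ B^{cy}_S\circ\overline{\alpha}_!$ of functors $\Alg_{\mathcal{AB}}(R,S)\to\Fun(\Delta\op,\Mod_R)$ that is lax symmetric monoidal, and then apply the symmetric monoidal functor $\on{colim}$; the transformation $B^{cy}_R\circ\overline{\alpha}^*\Rightarrow\alpha^*\circ B^{cy}_S\circ\overline{\alpha}_!$ is \ref{eqn : base change simplicial adjoint}, which Proposition \ref{prop : base change} already shows is a levelwise equivalence, hence an equivalence of simplicial objects, hence induces an equivalence on colimits. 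What remains is purely to promote \ref{eqn : base change simplicial adjoint} to a lax symmetric monoidal transformation — and the cleanest way is to promote instead the \emph{Beck–Chevalley} type mate underlying it: the transformation is the mate of the base-change square relating $\overline{\alpha}_!$, $B^{cy}_S$, $B^{cy}_R$ and $\alpha_!$. I would phrase this using the fact that $B^{cy}$ is natural in symmetric monoidal functors of the ambient category: applying $B^{cy}$ to the symmetric monoidal functor $\alpha_!\colon\Mod_R\to\Mod_S$ gives a square of symmetric monoidal functors that strictly commutes (up to coherent symmetric monoidal natural equivalence), namely $B^{cy}_S\circ(\alpha_!)_*\simeq(\alpha_!)_*\circ B^{cy}_R$ on $\Alg_{\mathcal{AB}}(\Mod_R)$; precomposing along $\beta$ and using $\overline\alpha_!\circ\text{(nothing)}$ versus $(\alpha_!)_*$, and then taking the mate with respect to the lax symmetric monoidal adjunctions $(\beta\dashv\overline\alpha^*)$ and $((\alpha_!)_*\dashv\alpha^*_*)$ — which are adjunctions in the $2$-category of symmetric monoidal categories, lax symmetric monoidal functors and transformations — produces the desired lax symmetric monoidal transformation. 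Its underlying transformation of plain functors is \ref{eqn : base change simplicial adjoint} by construction.

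The main obstacle, and where I would spend the most care, is the coherence bookkeeping in that last "mate" step: one must genuinely work in a $2$-category (or rather $(\infty,2)$-category) of symmetric monoidal $\infty$-categories and lax symmetric monoidal functors so that the Beck–Chevalley mate of a commuting square of (lax) symmetric monoidal functors along (lax) symmetric monoidal adjunctions is itself lax symmetric monoidal — this is a standard but nontrivial $\infty$-categorical fact (the relevant adjunction lives in $\cat{CAlg}(\cat{Pr}^L)$ or an appropriate lax variant, cf. the $\cat{Pr}^L\simeq(\cat{Pr}^R)\op$ duality used in the excerpt, and one invokes naturality of the mate construction). An alternative, which sidesteps some of this and which I would present if the abstract-nonsense route proves unwieldy, is to directly construct the transformation of simplicial objects as a morphism in $\Fun(\Delta\op, \Fun^{\otimes}(\mathcal{AB}^\otimes_{act},\Mod_R))$-type data by unwinding \ref{eqn : base change simplicial} at the level of the operad $\mathcal{AB}^\otimes_{act}$ and checking the lax structure maps levelwise using the elementary computation in the first paragraph of the proof of Proposition \ref{prop : base change} (the equivalence $\alpha^*X\otimes_R Y\simeq\alpha^*(X\otimes_S\alpha_! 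Y)$), which is exactly the "lax compatibility constraint" the tensor product of two pairs must satisfy; the symmetric monoidal structure on $\Alg_{\mathcal{AB}}(R,S)$ being the one induced from the cartesian square in $\cat{CAlg}(\cat{Pr}^L)$, this levelwise check suffices. Either way, once the transformation is known to be lax symmetric monoidal and (by Proposition \ref{prop : base change}) a levelwise equivalence on underlying functors, applying the symmetric monoidal functor $\on{colim}\colon\Fun(\Delta\op,\Mod_R)^\otimes\to\Mod_R^\otimes$ yields the claimed equivalence of lax symmetric monoidal functors, since a lax symmetric monoidal transformation whose underlying transformation is an equivalence is an equivalence of lax symmetric monoidal functors.
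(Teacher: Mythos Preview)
Your proposal is correct and follows essentially the same approach as the paper: construct the comparison transformation from the counit of $(\beta \dashv \overline{\alpha}^*)$ together with the identification $\alpha_! \simeq \overline{\alpha}_! \circ \beta$ and the commutation $\HH_S \circ \alpha_! \simeq \alpha_! \circ \HH_R$, then pass to the adjoint and invoke Proposition~\ref{prop : base change} for the pointwise equivalence.

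Two minor presentational differences are worth noting. First, the paper works directly with $\HH$ rather than descending to the simplicial level $B^{cy}$ and then taking colimits; since $\alpha_!$ is symmetric monoidal and colimit-preserving, the identification $\HH_S\circ\alpha_!\simeq\alpha_!\circ\HH_R$ is already available as a symmetric monoidal equivalence, so your detour through $B^{cy}$ is harmless but unnecessary. Second, where you frame the coherence step as a Beck--Chevalley mate in an $(\infty,2)$-category of symmetric monoidal categories and lax functors, the paper simply lists the three ingredient transformations ($\alpha_! \xrightarrow{\sim} \overline{\alpha}_!\circ\beta$, the counit $\beta\circ\overline{\alpha}^*\to\id$, and $\alpha_!\circ\HH_R\simeq\HH_S\circ\alpha_!$) and observes that each is a transformation of lax symmetric monoidal functors, so their composite and its adjoint are as well. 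This is the same content as your mate argument, but packaged more elementarily; your concern about the ``coherence bookkeeping'' is legitimate, and the paper's treatment of that step is admittedly also brief.
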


\begin{proof}
The main difficulty is to construct a comparison map between these two functors. The adjunction $(\alpha_!,\alpha^*)$ induces an adjunction
\[ \alpha_!:\Alg_{\mathcal{AB}}(\Mod_R)\leftrightarrows\Alg_{\mathcal{AB}}(\Mod_R):\alpha^*\]
By definition of $\beta$, there is an equivalence of functors $\alpha_!\xrightarrow{\sim}\overline{\alpha}_!\circ\beta$. We can precompose both sides with $\overline{\alpha}^*$ and we get an equivalence
\[\alpha_!\circ \overline{\alpha}^*\xrightarrow{\sim}\overline{\alpha}_!\circ\beta\circ\overline{\alpha}^*.\]
We can then use the natural transformation $\beta\circ\overline{\alpha}^*\to\id_{\Alg_{\mathcal{AB}}(R,S)}$ given by the counit of the adjunction $(\beta,\overline{\alpha}^*)$. This gives a natural transformation
\[\alpha_!\circ \overline{\alpha}^*\xrightarrow{\sim}\overline{\alpha}_!\circ\beta\circ\overline{\alpha}^*\to \overline{\alpha}_!\]
of functors $\Alg_{\mathcal{AB}}(R,S)\to \Alg_{\mathcal{AB}}(\Mod_S)$. 
Now, we can further compose with the functor $\HH_S:\Alg_{\mathcal{AB}}(\Mod_S)\to \Mod_S$. Using the fact that $\alpha_!$ is symmetric monoidal and commutes with colimits, there is a natural equivalence $\alpha_!\circ \HH_R\simeq \HH_S\circ\alpha_!$. So the above natural transformation induces a natural transformation
\[\alpha_!\circ \HH_R\circ \overline{\alpha}^*\to \HH_S\circ\overline{\alpha}_!\]
Finally, by adjunction this gives a natural transformation
\begin{equation}\label{eqn : base change functorial}
\HH_R\circ \overline{\alpha}^*\to \alpha^*\circ\HH_S\circ\overline{\alpha}_!
\end{equation}
Now, all the functor in sight are at least lax symmetric monoidal functors and the natural transformations that we used : $\alpha_!\xrightarrow{\sim}\overline{\alpha}_!\circ\beta$, $\beta\circ\overline{\alpha}^*\to\id$, $\alpha_!\circ \HH_R\simeq \HH_S\circ\alpha_!$ are all natural transformation of lax symmetric monoidal functors. It follows that the natural transformation (\ref{eqn : base change functorial}) is a natural transformation of lax monoidal functors.

The fact that this natural transformation is a natural equivalence is simply the observation that if we apply it to an object $(A,M)$ of $\Alg_{\mathcal{AB}}(R,S)$, the natural transformation above is simply the map that appears in Proposition \ref{prop : base change}.
\end{proof}

\section{The $Q$-construction}\label{section : QC}

In this section, we review Mac Lane's $Q$-construction. We first briefly  recall its definition and some basic facts which will later help us analyze it and understand its universal property. In Section \ref{section : Richter}, we compare some of our constructions with Richter's work in \cite{richter}, so our definition of the $Q$-construction follows this paper.

\subsection{Generalities}

In what follows, $\ZZ[-]$ will denote the functor $\Ab\to \Ab$ sending an abelian group to the free abelian group on its underlying set. 

Let $C_n=\{0,1\}^n$ denote the $n$-cube, let $0_i,1_i: C_n\to C_{n+1}$ denote the maps that insert a $0$ or a $1$ in the $i$-th position, for $1\leq i \leq n+1$. For an abelian group $A$, we denote by $A^{C_n}$ the abelian group of maps $C_n\to A$. Then have the following natural maps $A^{C_{n+1}}\to A^{C_n}$: 
\[R_i'f(e) = f(0_ie), S_i'f(e) = f(1_ie), P_i' = R_i'+S_i'\]
for any $f\in A^{C_{n+1}}$. 

\begin{defi}
We set $Q_n'(A)= \ZZ[A^{C_n}]$, and $R_i,S_i,P_i :Q_{n+1}(A)\to Q_n(A)$ are defined as $\ZZ[R_i'],\ZZ[S_i'],\ZZ[P_i']$ respectively. We define $\delta : Q_{n+1}'(A)\to Q_n'(A)$ using the formula $\delta=\sum_{i=1}^{n+1}(-1)^i(P_i-R_i-S_i)$
\end{defi}

One can check, using cubical identities, that $(Q_*'(A),\delta)$ is a chain complex. 

\begin{rem}
It is important to observe that $\ZZ[-]$ is not an additive functor. It follows that $\ZZ[0] =~\ZZ[P_i'-~R_i'-~S_i']$ is different from $\ZZ[P_i']-\ZZ[R_i']-\ZZ[S_i'] = P_i-R_i-S_i$. This chain complex can be seen as an attempt to compensate this non-additivity. This interpretation is compatible with $Q$'s universal property which we will explain later. 
\end{rem}

The $Q$-construction is then defined as some quotient of $Q'_*$.

\begin{cons}
Let $N_n(A)$ denote the subgroup of $Q_n'(A) = \ZZ[A^{C_n}]$ generated by the $f:C_n\to A$ such that $f\circ 0_i$ or $f\circ 1_i$ is constant equal to $0$ for some $i$. For $n=0$, $N_0(A)$ is the subgroup spanned by the constant $0$ function $C_0\to A$. The collection of subgroups $N_*(A)$ is stable under $\delta$ and is therefore a subcomplex of $Q_*'(A)$. By definition, MacLane's $Q$-construction is the functor $Q_*(A)=Q_*'(A)/N_*(A)$. It is a functor $\Ab\to \Ch_*(\ZZ)$. We will also view it as a functor $\Ab\to \Mod_\ZZ$ without changing the notation.
\end{cons}

We will the following two results about the $Q$-construction~: 

\begin{lemm}[\cite{jibpi}, 2.6]\label{lemm : Q is summand of Q'}
The functor $Q_n:\Ab\to \Ab$ is a direct summand of the functor $Q_n' = \ZZ[-^{C_n}]$.
\end{lemm}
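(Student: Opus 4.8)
The plan is to realize $N_n(A)$ as a \emph{natural} direct-sum complement of $Q_n(A)$ inside $Q_n'(A)$, by decomposing $\ZZ[A^{C_n}]$ over the vertices of the cube and matching the degeneracy subgroup with a sub-sum of tensor summands. The first step is purely formal: since $A^{C_n}\cong\prod_{v\in C_n}A$ as sets and $\ZZ[-]$ sends finite products of sets to tensor products of abelian groups, there is a natural isomorphism $Q_n'(A)=\ZZ[A^{C_n}]\cong\bigotimes_{v\in C_n}\ZZ[A]$. Using the element $0\in A$ to split $\ZZ[A]\cong\ZZ\oplus\widetilde{\ZZ}[A]$ naturally, where $\widetilde{\ZZ}[A]\subseteq\ZZ[A]$ is the kernel of the map $\ZZ[A]\to\ZZ$ sending each basis element to $1$ (a natural complement of the subgroup $\ZZ\cdot[0]$), and distributing the tensor product over this direct sum, one obtains a natural decomposition
\[Q_n'(A)\;\cong\;\bigoplus_{W\subseteq C_n}\widetilde{\ZZ}[A]^{\otimes W},\]
in which $\widetilde{\ZZ}[A]^{\otimes W}$ abbreviates the tensor product having a factor $\widetilde{\ZZ}[A]$ in each slot indexed by an element of $W$ and a factor $\ZZ$ in the remaining slots. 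A basis element $[f]$, for a function $f\colon C_n\to A$, expands in this decomposition as $\sum_{W\subseteq\operatorname{supp}(f)}\bigotimes_{v\in W}\big([f(v)]-[0]\big)$, where $\operatorname{supp}(f):=\{v\in C_n:f(v)\neq0\}$; in particular every component of $[f]$ lives in a summand indexed by a subset of $\operatorname{supp}(f)$.

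The heart of the argument is to identify $N_n(A)$ with the sub-sum $\bigoplus_{W}\widetilde{\ZZ}[A]^{\otimes W}$ taken over the \emph{degenerate} subsets $W$, namely those that are empty or on which some cube coordinate is constant (equivalently, for $n\geq1$, those contained in a facet of $C_n$). For the inclusion of $N_n(A)$ into this sub-sum: a generator $[f]$ of $N_n(A)$ has $f$ vanishing on some facet $\{x_i=\epsilon\}$, hence $\operatorname{supp}(f)$ is contained in the opposite facet $\{x_i=1-\epsilon\}$; then $\operatorname{supp}(f)$, and so every subset of it, is degenerate, and by the expansion above $[f]$ lies in the sub-sum. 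For the reverse inclusion: if $W$ is degenerate then $\widetilde{\ZZ}[A]^{\otimes W}$ is spanned by elements $\bigotimes_{v\in W}\big([a_v]-[0]\big)$, and expanding such an element rewrites it as an integer combination of basis elements $[g]$ with $\operatorname{supp}(g)\subseteq W$; since $W$ lies in a facet, each such $g$ vanishes on the opposite facet, so $[g]\in N_n(A)$ --- the case $W=\emptyset$ being the relation $[0]\in N_n(A)$, which holds by the definition of $N_0$ and, for $n\geq1$, because the constant function $0$ vanishes on a facet. Hence $N_n(A)=\bigoplus_{W\text{ degenerate}}\widetilde{\ZZ}[A]^{\otimes W}$, so that
\[Q_n(A)=Q_n'(A)/N_n(A)\;\cong\;\bigoplus_{W\text{ non-degenerate}}\widetilde{\ZZ}[A]^{\otimes W},\]
which, being the complementary sub-sum, is a direct summand of $Q_n'(A)$. (When $n=0$ this is simply the splitting $\ZZ[A]\cong\ZZ\oplus\widetilde{\ZZ}[A]$ with $Q_0=\widetilde{\ZZ}[A]$.)

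I do not expect a genuine obstacle: every isomorphism above is visibly natural in $A$, so the splitting is one of functors $\Ab\to\Ab$, and the only real work is the combinatorial bookkeeping in the middle paragraph --- translating ``$f$ vanishes on a facet'' into the support condition and checking that the degenerate tensor summands assemble exactly to $N_n(A)$, with a little care needed for the empty-set summand and for the low-dimensional case $n=0$.
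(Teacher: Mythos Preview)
The paper does not supply its own proof of this lemma; it is quoted from Jibladze--Pirashvili with only the citation \cite{jibpi}. Your argument is correct and is essentially the standard one found there: decompose $\ZZ[A^{C_n}]\cong\bigotimes_{v\in C_n}\ZZ[A]$, split each tensor factor using the basepoint $0\in A$, and identify $N_n(A)$ with the sub-sum indexed by subsets $W\subseteq C_n$ contained in a facet, so that $Q_n(A)$ is the complementary natural summand.
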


\begin{rem}
It is not the case that $Q_*$ is a direct summand of $Q'_*$ as functors with values in chain complexes. 
\end{rem}

\begin{lemm}[\cite{johnsonlinearization}, 6.3]
The natural map $Q_*(U)\oplus Q_*(V)\to Q_*(U\oplus V)$ is a quasi-isomorphism for any $U,V\in \Ab$. 
\end{lemm}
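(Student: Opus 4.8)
The plan is to recast the statement as the acyclicity of the second cross-effect of $Q_*$, to compute that cross-effect explicitly from the cubical model, and then to annihilate it by an explicit homotopy.

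\textbf{Reduction to a cross-effect.} The elementary but crucial observation is that $Q_*$ kills zero maps: for the zero homomorphism $0\colon A\to B$, the map $Q_n(0)$ sends every generator $[f]$ of $Q_n'(A)$ to the class of the constant function $0\colon C_n\to B$, which lies in $N_n(B)$ for every $n$ (for $n=0$ because $N_0$ is generated by precisely that function). Hence, writing $i_U,i_V,\pi_U,\pi_V$ for the inclusions and projections of $U\oplus V$, the chain map $r=(Q_*(\pi_U),Q_*(\pi_V))\colon Q_*(U\oplus V)\to Q_*(U)\oplus Q_*(V)$ satisfies $r\circ\iota=\id$, where $\iota(x,y)=Q_*(i_U)(x)+Q_*(i_V)(y)$ is the natural map in the statement, because the off-diagonal composites $Q_*(\pi_U i_V)$, $Q_*(\pi_V i_U)$ factor through zero maps and hence vanish. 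So $Q_*(U\oplus V)\cong \on{im}(\iota)\oplus\ker(r)$ as chain complexes, and the statement is equivalent to the acyclicity of $\on{cr}_2 Q_*(U,V):=\ker(r)$ for all $U,V$.

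\textbf{An explicit model.} Since $(U\oplus V)^{C_n}=U^{C_n}\times V^{C_n}$ as sets pointed at $0$, and $\ZZ[X\times Y]\cong\ZZ[X]\otimes_{\ZZ}\ZZ[Y]$, one has $\ker(\ZZ[X\times Y]\to\ZZ[X])=\ZZ[X]\otimes_\ZZ\widetilde{\ZZ}[Y]$ with $\widetilde{\ZZ}[Y]$ the augmentation ideal; intersecting the two such kernels identifies $\ker(Q_n'(\pi_U))\cap\ker(Q_n'(\pi_V))$ with $\widetilde{\ZZ}[U^{C_n}]\otimes_{\ZZ}\widetilde{\ZZ}[V^{C_n}]$, and passing to the $N_*$-quotient presents $\on{cr}_2 Q_*(U,V)$ as a natural subquotient of $\widetilde{\ZZ}[U^{C_*}]\otimes_{\ZZ}\widetilde{\ZZ}[V^{C_*}]$. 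Because the cubical face maps $R_i',S_i'$ are additive and $P_i'(f,g)=(P_i'f,P_i'g)$, the operators $R_i,S_i,P_i$ act diagonally, so the differential is the ``diagonal'' one $\sum_i(-1)^i(\overline{P_i}\otimes\overline{P_i}-\overline{R_i}\otimes\overline{R_i}-\overline{S_i}\otimes\overline{S_i})$ on basis tensors; concretely $\on{cr}_2 Q_*$ is the normalized cross-effect of MacLane's construction applied, along the diagonal, to the bilinear bifunctor $\widetilde{\ZZ}[-]\otimes_\ZZ\widetilde{\ZZ}[-]$.

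\textbf{Acyclicity.} This is the heart of the matter and the step I expect to be the main obstacle. I would exhibit a natural contracting homotopy on $\on{cr}_2 Q_*(U,V)$ built from the extra cube direction: the inclusions $0_{n+1},1_{n+1}\colon C_n\hookrightarrow C_{n+1}$ induce, after applying $\ZZ[-]$ and normalizing, maps $Q_n\to Q_{n+1}$ whose suitable alternating combination nullhomotopes the identity on the cross-effect summand — the point being that the defect of additivity encoded by $P_i-R_i-S_i$ becomes a coboundary on the genuinely bilinear part, since the cubical construction is designed precisely to linearize $\ZZ[-]$ and a bilinear functor is already ``linear enough.'' Conceptually this is the statement that $Q_*$ computes the first Goodwillie (= Dold--Puppe) derivative $D_1(\ZZ[-])$, reviewed in Section~\ref{section : QCGD}, and derivatives are automatically additive, so one could alternatively simply invoke that identification. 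The genuinely delicate points of the hands-on argument will be the sign bookkeeping over the $n+1$ cube coordinates and verifying that the proposed homotopy descends to the $N_*$-quotient — exactly where the non-additivity of $\ZZ[-]$ interacts with the combinatorics of the cube.
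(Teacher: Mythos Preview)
The paper does not give its own proof of this lemma: it is stated with a bare citation to Johnson--McCarthy, Lemma~6.3, and no argument. So there is no ``paper's proof'' to compare your attempt against; the question is whether your sketch stands on its own.

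Your reduction step is correct and is the standard opening move: $Q_*$ is reduced (it kills zero maps, exactly because the constant zero function lies in $N_n$ for every $n$), so the natural map $Q_*(U)\oplus Q_*(V)\to Q_*(U\oplus V)$ is split injective and the lemma is equivalent to acyclicity of the second cross-effect. Your explicit identification of $\on{cr}_2 Q_*(U,V)$ inside $\widetilde\ZZ[U^{C_*}]\otimes\widetilde\ZZ[V^{C_*}]$ with the diagonal cubical operators is also correct in outline, though one has to be a little careful about how $N_*(U\oplus V)$ decomposes under the tensor splitting (a pair $(f,g)$ lies in $N_n$ only when $f$ and $g$ vanish on the \emph{same} face).

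The gap is in your Step~3. You explicitly say you ``would exhibit'' a contracting homotopy and that this is ``the step I expect to be the main obstacle,'' and indeed you do not carry it out. The combinatorics of that homotopy and the verification that it respects the $N_*$-quotient is precisely the content of Johnson--McCarthy's argument; without it, the proposal is a correct reduction followed by an unfulfilled promise.

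Your proposed shortcut --- invoking the identification $Q\simeq D_1(\widetilde\ZZ[-])$ from Section~\ref{section : QCGD} and using that $D_1$ is automatically additive --- is circular in the logic of this paper. In the proof of Theorem~\ref{QisD1} the identification $\overline Q\simeq\mathbf Q$ is established by appealing to the fact that ``$Q$ preserves direct sums as a functor $F_\ZZ\to\Mod_\ZZ$,'' which is exactly the lemma under discussion. Whether the circularity can be broken by citing Johnson--McCarthy's Theorem~7.5 directly depends on whether their 7.5 is independent of their 6.3; the numbering suggests not, and in any case you would then simply be re-citing the same source as the paper does.
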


\subsection{MacLane homology}

In order to define MacLane homology, it is necessary to know that $Q$ has a lax monoidal structure as a functor $\Ab\to \Ch_*(\ZZ)$, induced by the so-called Dixmier products. We will eventually review and promote to a symmetric monoidal structure (on $Q$ as a functor $\Ab\to \Mod_\ZZ$ this time: there is no $1$-categorical symmetric monoidal structure extending the Dixmier products), and a monoidal augmentation $Q_*\to i$, where $i:\Ab\to \Ch_*(\ZZ)$ is the inclusion in degree $0$. 

This monoidal structure is given by 
\[\ZZ[A^{C_n}]\otimes \ZZ[B^{C_m}] \to \ZZ[(A\times B)^{C_n\times C_m}] \to \ZZ[(A\otimes B)^{C_{n+m}}.\]
One checks that this is compatible with the quotient $Q'_*\to Q_*$ and with the differentials, as well as with the augmentation $\ZZ[A]\to A$.

With this definition, one can see any $A$-bimodule as a $Q_*(A)$-bimodule, and thus define~:

\begin{defi}
Let $A$ be a ring and $M$ an $A$-bimodule. Then \emph{the MacLane homology} of $A$ with coefficients in $M$ is defined to be the Hochschild homology (in $\Mod_\ZZ$) of $Q_*(A)$ with coefficients in $M$~: 
\[\HML(A,M) = \HH_\ZZ(Q_*(A),M).\]
\end{defi}

As a functor, this can be defined as the composite 
\[\Alg_{\mathcal{AB}}(\Ab)\to \Alg_{\mathcal{AB}}(\Ch_*(\ZZ)) \to \Alg_{\mathcal{AB}}(\Mod_\ZZ) \xrightarrow{\HH_\ZZ} \Mod_\ZZ\]
where the first map is $(A,M)\mapsto (Q_*(A),M)$. 

\begin{rem}
Note that $Q_*(R)$ is degreewise flat for any $R$, so the infinity-categorical version of Hochschild homology, where tensor products are all derived, agrees with the usual, underived construction and so this functor agrees with the usual MacLane homology. 
\end{rem}

\subsection{The $Q$-construction as a left Kan extension}

\begin{nota}
Let $F_\ZZ$ denote the full subcategory of $\Ab$ on finitely generated free abelian groups (observe that it is essentially small). Let $\Mod_R^c$ denote the full subcategory of $\Mod_R$ on connective modules. 
\end{nota}

\begin{lemm}\label{nonabder}
The inclusion $F_\ZZ \to \Mod_\ZZ^{c}$ exhibits $\Mod_\ZZ^{c}$ as the nonabelian derived category of $F_\ZZ$ (as defined in \cite[5.5.8]{HTT}).
\end{lemm}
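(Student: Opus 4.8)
The statement to prove is Lemma \ref{nonabder}: the inclusion $F_\ZZ \hookrightarrow \Mod_\ZZ^c$ exhibits $\Mod_\ZZ^c$ as the nonabelian derived category $\mathcal P_\Sigma(F_\ZZ)$ of $F_\ZZ$. Recall that $\mathcal P_\Sigma(F_\ZZ)$ is the full subcategory of $\Fun(F_\ZZ\op,\cat{Spaces})$ on the presheaves that preserve finite products, and the universal property (\cite[5.5.8.15]{HTT}) says that for any category $\cat D$ with sifted colimits, restriction along the Yoneda embedding induces an equivalence between $\Fun^{\mathrm{sift}}(\mathcal P_\Sigma(F_\ZZ),\cat D)$ (sifted-colimit-preserving functors) and $\Fun^\times(F_\ZZ,\cat D)$ (finite-product-preserving functors). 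So what I must check is: (i) $F_\ZZ$ admits finite coproducts, and these are preserved by the inclusion into $\Mod_\ZZ^c$; (ii) the objects of $F_\ZZ$ are compact and projective (in the $\infty$-categorical sense) inside $\Mod_\ZZ^c$; (iii) $F_\ZZ$ generates $\Mod_\ZZ^c$ under sifted colimits. Together these are exactly the hypotheses of the recognition criterion \cite[5.5.8.22]{HTT} (or \cite[5.5.8.25]{HTT}), which then yields the desired equivalence.

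For (i): $F_\ZZ$ is the category of finitely generated free abelian groups, whose coproduct is $\oplus$, which agrees with the coproduct $\oplus = \otimes_{\SS}\!$-free-sum in $\Mod_\ZZ$ since $\ZZ$-module spectra are additive; since $\ZZ^n = \bigoplus^n \ZZ$ and $\ZZ$ is connective, these lie in $\Mod_\ZZ^c$. For (ii): $\ZZ$ is a compact projective generator of $\Mod_\ZZ^c$ — projectivity in the sense relevant here means $\map_{\Mod_\ZZ}(\ZZ, -) = \Omega^\infty(-)$ commutes with sifted colimits on connective modules, which holds because $\Omega^\infty$ commutes with filtered colimits and with geometric realizations of simplicial connective spectra (the latter being the content of, e.g., the fact that $\pi_0$ and homotopy groups commute with filtered colimits plus the standard connectivity estimate for realizations); and $\ZZ^n$ is a finite coproduct of copies of $\ZZ$, hence also compact projective. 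For (iii): every connective $\ZZ$-module is the geometric realization of a simplicial object with terms free $\ZZ$-modules on sets (the bar resolution of the free–forgetful adjunction $\cat{Spaces}\rightleftarrows \Mod_\ZZ^c$), and an arbitrary free $\ZZ$-module $\bigoplus_{I}\ZZ$ is a filtered colimit of its finitely generated free submodules; so $\Mod_\ZZ^c$ is generated under sifted (even: geometric realizations and filtered) colimits by $F_\ZZ$.

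With (i)–(iii) in hand, \cite[5.5.8.22]{HTT} applies verbatim: the inclusion extends to a functor $\mathcal P_\Sigma(F_\ZZ)\to \Mod_\ZZ^c$ preserving sifted colimits, which is fully faithful (because the generators are compact projective, so mapping spaces out of them are computed the same way on both sides) and essentially surjective (by (iii)), hence an equivalence. I would organize the write-up as: recall the three conditions, verify each in a sentence or two citing the standard facts about $\Omega^\infty$ and the bar resolution, then invoke the recognition theorem.

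The only genuinely delicate point is the compact-projectivity in (ii) — specifically that $\Omega^\infty\colon \Mod_\ZZ^c \to \cat{Spaces}$ preserves geometric realizations. This is not formal from $\Omega^\infty$ being a right adjoint; it uses that a geometric realization of connective spectra can be computed, in a range, by finite colimits, so that $\pi_i$ of the realization is a filtered colimit / is detected levelwise, which transports through $\Omega^\infty$. I expect to handle it by citing the standard statement that sifted colimits in $\Mod_\ZZ^c$ are computed as in $\cat{Spaces}$ after applying $\Omega^\infty$, equivalently that $\Mod_\ZZ^c \to \cat{Spaces}$ is conservative and preserves sifted colimits — this is e.g. a consequence of $\Mod_\ZZ^c$ being the category of connective modules over a connective ring together with \cite[Proposition 7.1.4.11 / Lemma 7.2.2.10 or similar]{HA} — rather than reproving it. Everything else is bookkeeping with universal properties.
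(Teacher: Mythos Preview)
Your argument is correct, but it takes a different route from the paper's. You verify the hypotheses of the recognition criterion \cite[5.5.8.22]{HTT} directly: finite coproducts in $F_\ZZ$, compact projectivity of its objects in $\Mod_\ZZ^c$ (via the fact that $\Omega^\infty$ preserves sifted colimits on connective modules), and generation under sifted colimits via the bar resolution. The paper instead argues through model categories and Lawvere theories: it identifies $\Fun^\times(F_\ZZ\op,\ucat{sSet})$ with $\ucat{sAb}$ (since $F_\ZZ\op$ is the Lawvere theory of abelian groups), invokes \cite[5.5.9.3]{HTT} to get $W^{-1}\Fun^\times(F_\ZZ\op,\ucat{sSet})\simeq \Fun^\times(F_\ZZ\op,\cat S)$, and then passes through Dold--Kan to $\Ch_{\geq 0}(\ZZ)$ and its localization $\Mod_\ZZ^c$. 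Your approach is more self-contained within the $\infty$-categorical framework and generalizes immediately to connective modules over any connective ring spectrum; the paper's approach, by contrast, makes explicit contact with the model-categorical/Badzioch perspective that is referenced in the subsequent Remark, and avoids having to isolate and cite the ``$\Omega^\infty$ preserves geometric realizations on connective objects'' fact that you correctly flag as the one nonformal step in your argument.
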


\begin{proof}
The nonabelian derived category of $F_\ZZ$ is, by definition, the category $\Fun^\times(F_\ZZ\op,\cat S)$. The category $F_\ZZ\op$ is an algebraic theory, and so $\Fun^\times(F_\ZZ\op,\ucat{sSet})$, the category of product-preserving functors $F_\ZZ\op\to \ucat{sSet}$, is the category of models of $F_\ZZ\op$ in $\ucat{sSet}$, that is, $\ucat{sAb}$, the category of simplicial abelian groups. The latter is known to be Quillen equivalent (and in fact, equivalent) to $\Ch_{\geq 0}(\ZZ)$. Also, by \cite[5.5.9.3]{HTT}, we have an equivalence $W^{-1}\Fun^\times(F_\ZZ\op,\ucat{sSet})\simeq \Fun^\times(F_\ZZ\op,\cat S)$. It follows that 
\[\Mod_\ZZ^{c}\simeq W^{-1}\Ch_{\geq 0}(\ZZ)\simeq W^{-1}\Fun^\times(F_\ZZ\op,\ucat{sSet})\simeq \Fun^\times(F_\ZZ\op,\cat S).\]
Moreover one can easily trace through these equivalences to check that the Yoneda embedding $F_\ZZ\to \Fun(F_\ZZ\op,\cat S)$ is identified with the usual inclusion $F_\ZZ\to \Mod_\ZZ^c$ 
\end{proof}

\begin{rem}\label{remkbadz}
The fact that the model category $\Fun^\times(F_\ZZ\op,\ucat{sSet})$ presents $\Fun^\times(F_\ZZ\op,\cat S)$ is specific to $\ucat{sSet}$ and $\cat S$ and is not true for more general model categories. In fact an easy consequence of some of the results of this paper is that in general it does not hold for $\Ch_*(R)$ and $\Mod_R$. 

These ideas can be traced back to \cite{Badzioch}, where a similar result is proved for an arbitrary algebraic theory. 
\end{rem}

\begin{coro}\label{univnonabder}
For any presentable category $\cat{C}$, the restriction functor
\[\Fun^{sif}(\Mod_\ZZ^c,\cat{C})\to\Fun(F_\ZZ,\cat{C})\]
is an equivalence, where $\Fun^{sif}$ denotes the full subcategory of $\Fun$ on sifted-colimit preserving functors. This equivalence further restricts to an equivalence 
\[\Fun^L(\Mod_\ZZ^c,\cat{C})\to\Fun^\amalg(F_\ZZ,\cat{C})\]
between colimit-preserving functors and coproduct-preserving functors.
\end{coro}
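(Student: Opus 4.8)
The plan is to read this off from the universal property of the nonabelian derived category, feeding in the identification of $\Mod_\ZZ^c$ obtained in Lemma \ref{nonabder}. Recall that for a small $\infty$-category $\cat D$ admitting finite coproducts, its nonabelian derived category $\Fun^\times(\cat D\op,\cat S)$ is freely generated by $\cat D$ under sifted colimits: concretely, \cite[Proposition 5.5.8.15]{HTT} states that restriction along the Yoneda embedding $\cat D\to\Fun^\times(\cat D\op,\cat S)$ induces an equivalence $\Fun^{sif}(\Fun^\times(\cat D\op,\cat S),\cat C)\xrightarrow{\sim}\Fun(\cat D,\cat C)$ for every $\cat C$ admitting sifted colimits, and that when $\cat C$ moreover admits all small colimits this restricts to an equivalence between colimit-preserving functors out of $\Fun^\times(\cat D\op,\cat S)$ and finite-coproduct-preserving functors out of $\cat D$.

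First I would check the hypotheses with $\cat D = F_\ZZ$: this category is essentially small, as already noted, and it admits finite coproducts, since a finite direct sum of finitely generated free abelian groups is again finitely generated and free. Any presentable $\cat C$ admits all small colimits, in particular all sifted colimits, so both halves of the cited proposition apply. Then I would transport the universal property across the equivalence $\Mod_\ZZ^c\simeq\Fun^\times(F_\ZZ\op,\cat S)$ of Lemma \ref{nonabder}; under that equivalence the Yoneda embedding is carried to the standard inclusion $F_\ZZ\hookrightarrow\Mod_\ZZ^c$ --- this is exactly the last assertion proved in Lemma \ref{nonabder} --- so restriction along the Yoneda embedding becomes restriction along $F_\ZZ\hookrightarrow\Mod_\ZZ^c$, and the two equivalences in the statement are precisely parts (1) and (2) of \cite[Proposition 5.5.8.15]{HTT}, with $\Fun^\amalg(F_\ZZ,\cat C)$ read as the full subcategory of finite-coproduct-preserving functors.

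Since this is essentially an invocation of a reference, I do not expect a genuine obstacle. The one point that really requires care --- and which carries the actual content --- is the compatibility of the Yoneda embedding with the inclusion $F_\ZZ\hookrightarrow\Mod_\ZZ^c$ under the chain of equivalences in Lemma \ref{nonabder}; granting that, everything else is a matter of matching the conventions for ``sifted-colimit-preserving'', ``colimit-preserving'' and ``coproduct-preserving'' functors used here against the hypotheses of \cite[Proposition 5.5.8.15]{HTT}.
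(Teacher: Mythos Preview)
Your proposal is correct and matches the paper's intended argument: the corollary is stated without proof precisely because it is an immediate consequence of Lemma~\ref{nonabder} together with the universal property of the nonabelian derived category recorded in \cite[5.5.8]{HTT}, exactly as you have written out. The only content, as you note, is the compatibility of the Yoneda embedding with the inclusion $F_\ZZ\hookrightarrow\Mod_\ZZ^c$, and that is handled by the final sentence of the proof of Lemma~\ref{nonabder}.
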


We can now define a new functor denoted $\bQ$.

\begin{defi}\label{defiQ}
We denote by $\bQ$ the functor $\Mod_{\ZZ}^c\to \Mod_{\ZZ}$ corresponding to $Q_{|F_{\ZZ}}$ via the above equivalence.
\end{defi}

\begin{prop}
The restriction of $\bQ$ to $\Ab\subset \Mod_{\ZZ}^c$ is naturally equivalent to $Q$.
\end{prop}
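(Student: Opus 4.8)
The plan is to produce a canonical comparison map $\bQ|_\Ab \to Q$, exhibiting $\bQ|_\Ab$ as the left Kan extension of $Q|_{F_\ZZ}$ along $F_\ZZ \hookrightarrow \Ab$, and then to check this map is an equivalence by reducing --- using filtered colimits and additivity --- to the case of finite cyclic groups.

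First I would observe that $Q|_{F_\ZZ}\colon F_\ZZ \to \Mod_\ZZ$ preserves finite coproducts: this is the Johnson--McCarthy additivity lemma recalled above together with the computation $Q_*(0)=0$, which is immediate from the definition. By the second half of Corollary~\ref{univnonabder}, $\bQ$ therefore preserves \emph{all} colimits, not merely sifted ones. Since the inverse of the equivalence of Corollary~\ref{univnonabder} is left Kan extension along the (fully faithful) Yoneda-type inclusion $F_\ZZ \hookrightarrow \Mod_\ZZ^c$ of Lemma~\ref{nonabder}, the functor $\bQ$ is the left Kan extension of $Q|_{F_\ZZ}$ along that inclusion. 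Because $\Ab$ sits fully faithfully in $\Mod_\ZZ^c$ and contains $F_\ZZ$, transitivity of left Kan extensions together with the fact that a left Kan extension along a fully faithful functor restricts back to the original functor yields a natural equivalence $\bQ|_\Ab \simeq \on{LKE}_{F_\ZZ \hookrightarrow \Ab}(Q|_{F_\ZZ})$, and hence a comparison transformation $c\colon \bQ|_\Ab \to Q$ which is the identity on $F_\ZZ$; the proposition is exactly the assertion that $c$ is an equivalence.

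To prove this I would first note that both $\bQ|_\Ab$ and $Q$ preserve filtered colimits: for $\bQ|_\Ab$ because $\bQ$ preserves all colimits while $\Ab \hookrightarrow \Mod_\ZZ^c$ preserves filtered colimits, and for $Q$ because each $Q_n$ is a retract of $\ZZ[-^{C_n}]$ by Lemma~\ref{lemm : Q is summand of Q'}, this last functor preserves filtered colimits, and filtered colimits of chain complexes of abelian groups are exact. Both functors are also additive (for $Q$, again by the Johnson--McCarthy lemma). Since every abelian group is a filtered colimit of finitely generated ones and every finitely generated abelian group is a finite direct sum of cyclic groups, it suffices to check that $c$ is an equivalence on cyclic groups; on the free groups $\ZZ^{\oplus k}$ it is the identity, so only the finite cyclic groups $\ZZ/m$ remain.

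The case of $\ZZ/m$ is the one genuine obstacle. Write $\ZZ/m$ as the cofiber of $\ZZ \xrightarrow{m} \ZZ$; because $m$ acts injectively this cofiber has no higher homotopy, so the corresponding pushout square in $\Ab$ maps to a pushout square in $\Mod_\ZZ^c$, which $\bQ$ then sends to a pushout square in $\Mod_\ZZ$. I claim $Q$ also sends this square to a pushout square --- equivalently, that $Q$ carries the short exact sequence $0 \to \ZZ \xrightarrow{m} \ZZ \to \ZZ/m \to 0$ to a cofiber sequence. This is the crucial input, and it is strictly stronger than the additivity lemma, which only addresses \emph{split} extensions; it holds because $Q$ is, by \cite{johnsonlinearization} (the identification reviewed in Section~\ref{section : QCGD}), the first Goodwillie derivative --- the linearization --- of the functor $\ZZ[-]$, and a linearization, being excisive with values in a stable category, takes cofiber sequences to cofiber sequences. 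Granting this, both $\bQ|_\Ab$ and $Q$ send the defining pushout square of $\ZZ/m$ to pushout squares, and since $c$ is an equivalence (indeed the identity) on the three other vertices $\ZZ$, $\ZZ$, $0$ of that square, it is an equivalence on $\ZZ/m$. This completes the reduction, and hence the proof.
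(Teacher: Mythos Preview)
Your proof is correct but takes a genuinely different route from the paper's. The paper argues directly that $Q:\Ab\to\Mod_\ZZ$ is the left Kan extension of $Q|_{F_\ZZ}$ by working $1$-categorically: it observes that each $Q_n'=\ZZ[(-)^{C_n}]$ preserves sifted colimits, that the slice $F_\ZZ/A$ is sifted for every abelian group $A$, and that the $\infty$-categorical and $1$-categorical Kan extensions agree here because $Q$ is degreewise projective (hence cofibrant in the projective model structure on functor categories). This argument is entirely self-contained within Section~\ref{section : QC} and uses nothing about excisiveness or Goodwillie calculus.

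Your approach instead leans on the Johnson--McCarthy identification of the naive extension $\mathbf Q$ with $D_1(\tilde\ZZ[-])$ to obtain that $Q$ sends the short exact sequence $\ZZ\xrightarrow{m}\ZZ\to\ZZ/m$ to a cofiber sequence; combined with additivity and preservation of filtered colimits, this pins down $Q$ on all of $\Ab$. Two remarks: first, what Johnson--McCarthy literally prove is that the \emph{naive extension} $\mathbf Q$ (not $Q$ itself) is excisive on $\Mod_\ZZ^{c,\omega}$, so your argument implicitly uses that $\mathbf Q|_{\Ab}=Q$ and that the pushout square defining $\ZZ/m$ lies in $\Mod_\ZZ^{c,\omega}$ and remains a pushout there---both true, but worth making explicit. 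Second, in the paper's organization this is a forward reference to Section~\ref{section : QCGD}; there is no circularity (the Johnson--McCarthy theorem is external and the proof of Theorem~\ref{QisD1} does not use the proposition you are proving), but it does invert the paper's logical order. In fact, once you grant Johnson--McCarthy, a cleaner version of your strategy is simply to note that $\mathbf Q$ and $\bQ$ are both colimit-preserving functors $\Mod_\ZZ^c\to\Mod_\ZZ$ agreeing on $F_\ZZ$ (this is exactly the argument in the proof of Theorem~\ref{QisD1}), hence $\bQ\simeq\mathbf Q$, and then $\bQ|_{\Ab}\simeq\mathbf Q|_{\Ab}=Q$ by definition of $\mathbf Q$. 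Your reduction to cyclic groups is correct but more circuitous than necessary given the tools you are invoking.
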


\begin{proof}
The functor $\bQ$ is the left Kan extension of $Q$ along the fully-faithful inclusion $F_\ZZ\to\Mod_\ZZ^c$. This inclusion factors as $F_\ZZ\to \Ab\to \Mod_\ZZ^c$, both of which are fully-faithful, so that the restriction of $\overline Q$ to $\Ab$ is the left Kan extension of $Q$ to $\Ab$. The result then follows from the following lemma.
\end{proof} 

\begin{lemm}
The functor  $Q: \Ab\to \Mod_\ZZ$ is the left Kan extension of its restriction to $F_\ZZ$.
\end{lemm}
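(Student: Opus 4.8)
The plan is to identify $\bQ$ (Definition \ref{defiQ}) with the classical chain-level functor $Q_*$ directly on all of $\Ab$, and observe this amounts to the lemma. The first step is to reduce to such an identification: the left Kan extension $\on{Lan}_{F_\ZZ\to\Ab}(Q_{|F_\ZZ})$ is computed pointwise at $A\in\Ab$ by the colimit of $Q_{|F_\ZZ}$ over the comma category $F_\ZZ\times_{\Ab}\Ab_{/A}$, and since $F_\ZZ$ consists of $0$-truncated objects and $A$ is $0$-truncated, this comma category coincides with $F_\ZZ\times_{\Mod_\ZZ^c}(\Mod_\ZZ^c)_{/A}$. By Corollary \ref{univnonabder} together with Lemma \ref{nonabder}, $\bQ$ is exactly the left Kan extension of $Q_{|F_\ZZ}$ along the Yoneda embedding $F_\ZZ\to\Mod_\ZZ^c$, so its pointwise formula at $A\in\Ab$ gives $\bQ(A)\simeq\on{Lan}_{F_\ZZ\to\Ab}(Q_{|F_\ZZ})(A)$, naturally. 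Hence it suffices to produce a natural equivalence $\bQ_{|\Ab}\simeq Q$.

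Next I would resolve $A$ by free abelian groups. Let $B_\bullet\to A$ be the canonical free simplicial resolution coming from the free–forgetful adjunction between $\ucat{Set}$ and $\Ab$, so that each $B_m$ is a (possibly infinitely generated) free abelian group, $|B_\bullet|\simeq A$ in $\Mod_\ZZ^c$, and the underlying augmented simplicial set $UB_\bullet\to UA$ (where $U\colon\Ab\to\ucat{Set}$ is the forgetful functor) admits an extra degeneracy and is therefore a simplicial homotopy equivalence. Since $\bQ$ preserves sifted colimits, and in particular geometric realizations, $\bQ(A)\simeq|\bQ(B_\bullet)|$. Each $B_m$, being free, is a filtered colimit of objects of $F_\ZZ$; both $\bQ$ and $Q_*$ preserve filtered colimits (for $Q_*$ because $\ZZ[-^{C_n}]$ does and filtered colimits of chain complexes are exact, for $\bQ$ because filtered colimits are sifted), and they agree on $F_\ZZ$ by construction, so $\bQ(B_m)\simeq Q_*(B_m)$ and therefore $\bQ(A)\simeq|Q_*(B_\bullet)|$, the geometric realization now taken in $\Mod_\ZZ$.

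It then remains to show that the augmentation induces an equivalence $|Q_*(B_\bullet)|\xrightarrow{\sim}Q_*(A)$. Here I would use Lemma \ref{lemm : Q is summand of Q'}: for each $n$ the functor $Q_n$ is a retract of $\ZZ[-^{C_n}]$, which factors through $U$ as $A\mapsto\ZZ[(UA)^{C_n}]$. Since $UB_\bullet\to UA$ is a simplicial homotopy equivalence, so is $(UB_\bullet)^{C_n}\to(UA)^{C_n}$ (finite products of simplicial sets preserve weak equivalences), hence $\ZZ[(UB_\bullet)^{C_n}]\to\ZZ[(UA)^{C_n}]$ is a weak equivalence of simplicial abelian groups; passing to the retract $Q_n$ and then to realizations in $\Mod_\ZZ$ gives $|Q_n(B_\bullet)|\simeq Q_n(A)$ for every $n$, compatibly with the maps $R_i,S_i,P_i$ and hence with $\delta$. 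Assembling these degreewise equivalences — concretely, running the spectral sequence of the first-quadrant double complex $(m,n)\mapsto Q_n(B_m)$, whose first page (taking homology in the simplicial direction) is concentrated in simplicial degree $0$ with entries $Q_n(A)$ and differential $\delta$, so that it collapses to $H_*(Q_*(A))$ — yields $|Q_*(B_\bullet)|\simeq Q_*(A)$, as desired, all identifications being natural in $A$. Combining the three steps gives $\bQ_{|\Ab}\simeq Q$, hence the lemma.

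I expect the main obstacle to be this last equivalence $|Q_*(B_\bullet)|\xrightarrow{\sim}Q_*(A)$: the point is that the only failure of additivity of $Q_*$ is ``set-theoretic'', i.e.\ confined to the levels $Q_n\subset\ZZ[-^{C_n}]$ of Lemma \ref{lemm : Q is summand of Q'}, so that $Q_*$ carries the $U$-split resolution $B_\bullet\to A$ to a resolution after realization even though $Q_*$ is very far from preserving short exact sequences of abelian groups at the level of chain complexes. A secondary point requiring care is the standard but not entirely formal fact that the canonical bar resolution $B_\bullet$ realizes to $A$ in $\Mod_\ZZ^c$.
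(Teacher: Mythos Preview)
Your proof is correct but takes a genuinely different route from the paper's. The paper proceeds by reducing the $\infty$-categorical left Kan extension to the $1$-categorical one: restricting to $\Ab_\kappa$, it uses the projective model structure on $\Fun(\Ab_\kappa,\Ch_*(\ZZ))$ together with the observation that $Q$ is a connective complex of projective functors (hence cofibrant) to conclude that the derived and underived Kan extensions agree, and then proves the $1$-categorical statement degreewise by showing $F_\ZZ/A$ is sifted and $Q_n'=\ZZ[(-)^{C_n}]$ preserves sifted colimits. You instead bypass the model-categorical reduction entirely: after identifying $\on{Lan}_{F_\ZZ\to\Ab}(Q_{|F_\ZZ})$ with $\bQ_{|\Ab}$ via the comma-category comparison, you compute $\bQ(A)$ directly using the $U$-split bar resolution $B_\bullet\to A$, and exploit that $Q_n$ is a retract of a functor factoring through $U$ to show that $Q_*$ sends this particular resolution to a resolution, even though $Q_*$ is far from exact. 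Both arguments hinge on Lemma~\ref{lemm : Q is summand of Q'}, but use it differently: the paper to deduce preservation of sifted colimits, you to propagate the $U$-split contracting homotopy. Your approach is more hands-on and avoids invoking cofibrancy in a functor category; the paper's is more structural and isolates cleanly the single $1$-categorical fact (siftedness of $F_\ZZ/A$) that drives everything. One minor point worth making explicit in your write-up: the equivalence $\bQ(B_m)\simeq Q_*(B_m)$ in step~5 should be taken to be the canonical comparison map $\eta$ from the Kan extension, so that naturality in $m$ is automatic; your filtered-colimit argument then shows this specific $\eta_{B_m}$ is an equivalence.
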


\begin{proof}
The Kan extension will be pointwise, so it suffices to prove the result for 
\[Q:~\Ab_\kappa\to~\Mod_\ZZ\]
where $\Ab_\kappa$ is the (essentially small) category of abelian groups of cardinality $<\kappa$ for $\kappa$ an infinite cardinal. 

We restrict to $\Ab_\kappa$ as a technical convenience, to be able to identify $\Fun(\Ab_\kappa,\Mod_\ZZ)$ with $W^{-1}\Fun(\Ab_\kappa,\Ch_*(\ZZ))$, which requires the domain to be (essentially) small. The argument does not depend on it significantly, apart from this. 

The restriction functor $i^* : \Fun(\Ab_\kappa,\Ch_*(\ZZ))\to \Fun(F_\ZZ,\Ch_*(\ZZ))$ is exact, therefore its derived functor $\Fun(\Ab_\kappa, \Mod_\ZZ)\to \Fun(F_\ZZ,\Mod_\ZZ)$ is just restriction, denoted $i^*$ as well. 

Its left adjoint, the $1$-categorical left Kan extension, can be derived. The resulting functor is the $\infty$-categorical left Kan extension. Since $Q: F_\ZZ \to\Ch_*(\ZZ)$ is a connective chain complex of projective functors, it is cofibrant in the projective model structure, and therefore the $\infty$-categorical left Kan extension of $Q$ is simply computed as the $1$-categorical left Kan extension.  

The result now follows from the following lemma.
\end{proof}

\begin{lemm}
Let $\kappa$ be an arbitrary infinite cardinal. Then, as a functor of $1$-categories, $\Ab_\kappa\to \Ch_*(\ZZ)$, the functor $Q$ is the left Kan extension of its restriction to $F_\ZZ$.
\end{lemm}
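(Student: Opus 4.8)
The plan is to establish the statement in its pointwise form. Since $F_\ZZ\hookrightarrow\Ab_\kappa$ is fully faithful and $\Ch_*(\ZZ)$ is cocomplete, the left Kan extension of $Q|_{F_\ZZ}$ is computed pointwise, and the assertion is precisely that for every abelian group $A$ the canonical comparison map
\[\on{colim}_{(\phi\colon P\to A)\,\in\, F_\ZZ\downarrow A} Q(P)\;\longrightarrow\; Q(A)\]
is an isomorphism of chain complexes. The comma category $F_\ZZ\downarrow A$, and hence this colimit, is unchanged if we replace $\Ab_\kappa$ by $\Ab$, so I would argue over $\Ab$ throughout.

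I would then make two reductions. First, colimits in $\Ch_*(\ZZ)$ are computed degreewise, so it suffices to treat each functor $Q_m\colon\Ab\to\Ab$ separately. Second, by Lemma~\ref{lemm : Q is summand of Q'}, $Q_m$ is a \emph{natural} direct summand of $Q_m'=\ZZ[(-)^{C_m}]$; hence, evaluated at any $A$, the comparison map for $Q_m$ is a retract, in the arrow category of $\Ab$, of the comparison map for $Q_m'$, and since a retract of an isomorphism is an isomorphism it suffices to prove the statement for $Q_m'$. Note that the additivity statement of \cite{johnsonlinearization} is only a quasi-isomorphism, hence useless for this strictly $1$-categorical claim; it is exactly the splitting of $Q_m$ off $Q_m'$ that lets the argument run on the nose at the level of complexes.

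For $Q_m'$ I would push the two outer functors past the colimit one at a time. Let $U\colon\Ab\to\ucat{Set}$ denote the forgetful functor, so that $Q_m'(A)=\ZZ[U(A)^{C_m}]$. The free functor $\ZZ[-]\colon\ucat{Set}\to\Ab$ is a left adjoint and so preserves colimits. The finite Cartesian power $(-)^{C_m}\colon\ucat{Set}\to\ucat{Set}$ commutes with the colimit over $F_\ZZ\downarrow A$ because that category has finite coproducts --- the coproduct of $\phi\colon P\to A$ and $\phi'\colon P'\to A$ is $(\phi,\phi')\colon P\oplus P'\to A$, with initial object $0\to A$ --- so it is sifted, and colimits over sifted categories commute with finite products in $\ucat{Set}$. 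What remains is the elementary fact that the canonical map $\on{colim}_{F_\ZZ\downarrow A}U(P)\to U(A)$ is a bijection of sets: it is surjective because every $a\in A$ is the image of $1$ under $\ZZ\xrightarrow{1\mapsto a}A$, and injective because any representative $(\phi\colon P\to A,\,x\in U(P))$ gets identified, via the morphism $\ZZ\xrightarrow{1\mapsto x}P$ of $F_\ZZ\downarrow A$, with the representative $(\ZZ\xrightarrow{1\mapsto\phi(x)}A,\,1)$. Chaining these three identifications --- and checking that they carry the canonical comparison map to the identity --- finishes the argument.

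The step I expect to be the crux is the passage to $Q_m'$ and the colimit computation for it: one must recognise that, once $Q_m'$ is stripped down, everything rests on the facts that $\ZZ[-]$ and finite Cartesian powers of sets commute with sifted colimits of sets and that $U$, restricted to $F_\ZZ\downarrow A$, has colimit $U(A)$, together with the siftedness of $F_\ZZ\downarrow A$ which is what permits moving $(-)^{C_m}$ past the colimit. Conceptually this is exactly where the non-additivity of $\ZZ[-]$ is harmless: on underlying sets $\ZZ[-]$ does commute with sifted colimits, whereas its derived analogue does not, and it is this discrepancy that forces the corresponding $\infty$-categorical statement to be phrased in terms of the full $Q$-construction rather than a cruder functor.
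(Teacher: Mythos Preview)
Your proof is correct and follows essentially the same approach as the paper's: reduce degreewise, pass to $Q_m'$ via the natural splitting, observe that $F_\ZZ\downarrow A$ is sifted (it has finite coproducts), and use that $Q_m'$ preserves sifted colimits. The only cosmetic difference is that the paper packages the last step as ``$Q_n'$ preserves sifted colimits as a composite of two functors that do'' and implicitly uses $A\cong\on{colim}_{F_\ZZ/A}P$ in $\Ab$, whereas you unwind this into the three-step factorisation through $\ucat{Set}$ and verify $\on{colim}_{F_\ZZ/A}U(P)\cong U(A)$ by hand; both arguments amount to the same thing.
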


\begin{proof}
The category $\Ch_*(\ZZ)$ is cocomplete and colimits of chain complexes are computed degreewise. So we only need to prove the result for each $Q_n: \Ab_\kappa\to \Ab$. 

Moreover, note that $F_\ZZ\subset \Ab_\kappa\subset \Ab$ are full-subcategory inclusions, therefore to prove the result about $Q_n : \Ab_\kappa\to \Ab$, it suffices to prove that $Q_n : \Ab\to \Ab$ is the left Kan extension of its restriction to $F_\ZZ$, and the result for $\Ab_\kappa$ will follow. 

Recall from Lemma \ref{lemm : Q is summand of Q'} that $Q_n$ is a direct summand of $Q_n' = \ZZ [(-)^{C_n}]$, and the Kan extension functor is left adjoint so it preserves direct sums; in particular it suffices to prove the result for $Q_n'$. Now the result will follow from the fact that $Q_n'$ preserves sifted colimits, as a composite of two functors that do. 

Given any abelian group $A$, we can form the slice category $D:=F_\ZZ/A$ of pairs 
\[(P,f), P\in F_\ZZ, f:P\to A.\]
We claim that $D$ is sifted. To show this, consider $((P,f),(Q,g))\in D\times D$, note that $(P\oplus Q, f\oplus g)$ is initial in $((P,f),(Q,g))/D$, so that the latter is a weakly contractible category. It follows that $D\to D\times D$ is cofinal, so $D$ is sifted. 

In particular, $Q_n'(A) \cong \mathrm{colim}_{(P,f)\in F_\ZZ/A}Q_n'(P) = \mathrm{Lan}_i (Q_n'\circ i)(A)$, where $i: F_\ZZ\to \Ab$ is the inclusion, so $Q_n'$ is indeed the Kan extension of its restriction. The result follows. 
\end{proof}

\section{The $Q$-construction as a first Goodwillie derivative}\label{section : QCGD}
In this section, we review a theorem of Johnson and McCarthy by interpreting \cite{johnsonlinearization} in terms of Goodwillie derivative. 

We denote by $\Mod_{\ZZ}^c$ the category of connective $\ZZ$-modules. We denote by $\Mod_\ZZ^{c,\omega}$ the category of $\ZZ$-modules that are connective and compact. For $\cat{C}$ a pointed presentable $\infty$-category, we denote by $\Fun^{exc}(\Mod_\ZZ^{c,\omega},\cat{C})$ the full subcategory of $\Fun(\Mod_\ZZ^{c,\omega},\cat{C})$ spanned by functors that are excisive. We will slightly differ from the usual conventions, and say that a functor is excisive if it preserves the terminal object and sends cocartesian squares to cartesian squares (this is usually called \textit{reduced excisive}).

The inclusion into pointed functors (those that preserve the terminal object)
\[\Fun^{exc}(\Mod_\ZZ^{c,\omega},\cat{C})\to \Fun^*(\Mod_\ZZ^{c,\omega},\cat{C})\]
has a left adjoint denoted $D_1$. An explicit construction is given by the following formula
\[D_1F(X)=\on{colim}_n\Omega^nF(\Sigma^n X).\]
This formula is due to Goodwillie in the context of functors from spaces to spaces (see \cite[Section 1]{GoodwillieCalculusIII}). See also \cite[Example 5.3]{KuhnGoodwillie} for more general model categories  and \cite[Example 6.1.1.28]{HA} for an $\infty$-categorical version of this statement.

Recall that the functor $\ZZ[-]:\Ab\to \Ab$ is the functor that sends an abelian group $A$ to the free abelian group generated by the set $A$. We can extend $\ZZ[-]$ to a functor from simplicial abelian groups to simplicial abelian groups. The resulting functor preserves weak equivalences and thus induces a functor $\Mod_{\ZZ}\to\Mod_{\ZZ}$. This functor can easily be seen to be equivalent to the functor $A\mapsto \ZZ\otimes \Sigma^{\infty}_+A$ (recall that $\otimes $ is the tensor product over $\SS$, i.e. the smash product of spectra)

Similarly, there is a reduced version $\tilde{\ZZ}[-]$ that sends $A$ to the quotient $\ZZ[A]/\ZZ[0]$. Applying this functor levelwise we obtain a functor
\[\tilde{\ZZ}[-]:s\Ab\to s\Ab.\]
This functor preserves weak equivalences and thus induces a functor
\[\Mod_{\ZZ}^c\to \Mod_{\ZZ}^{c}.\] 
This last functor can easily be checked to be the functor $A\mapsto \ZZ\otimes\Sigma^\infty A$ (where $A$ is based at $0$).

The main theorem of Johnson and McCarthy is the following.

\begin{theo}\label{QisD1}
There is an equivalence 
\[\overline Q\simeq D_1(\tilde{\ZZ}[-])\]
in the category $\Fun^{exc}(\Mod_{\ZZ}^{c,\omega},\Mod_{\ZZ})$.
\end{theo}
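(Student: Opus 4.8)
The key is to recognize that both sides are sifted-colimit-preserving functors $\Mod_\ZZ^c \to \Mod_\ZZ$ (equivalently, excisive functors on the compact connective part, extended), so by Corollary \ref{univnonabder} it suffices to produce a natural equivalence after restricting to $F_\ZZ$, the category of finitely generated free abelian groups, and then check that this equivalence is compatible with the sifted-colimit extensions. Concretely, $\overline Q$ is \emph{by definition} the left Kan extension of $Q_{|F_\ZZ}$, and $D_1(\tilde\ZZ[-])$ restricted to $F_\ZZ$ agrees with Goodwillie's formula $X\mapsto \on{colim}_n \Omega^n \tilde\ZZ[\Sigma^n X]$. So the real content is: for $P$ a finitely generated free abelian group, there is a natural equivalence $Q_*(P)\simeq \on{colim}_n \Omega^n\tilde\ZZ[\Sigma^n P]$.

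First I would recall that this last equivalence is precisely the main result of Johnson--McCarthy \cite{johnsonlinearization}: they compute the first (stable) derivative of the functor $\tilde\ZZ[-]$ (or equivalently of $A\mapsto \ZZ\otimes\Sigma^\infty A$) and identify it with MacLane's $Q$-construction, via the comparison of the cubical cross-effect filtration underlying $Q$ with the cross-effects computing Goodwillie derivatives. The cubical nature of $Q'_*(A)=\ZZ[A^{C_n}]$ is exactly a combinatorial model for the $n$-fold cross-effect of $\ZZ[-]$, and the differential $\delta=\sum(-1)^i(P_i - R_i - S_i)$ is the standard alternating sum assembling these cross-effects into the linearization; the passage $Q'_*\to Q_*$ by killing the degenerate subcomplex $N_*$ corresponds to passing from cross-effects to reduced cross-effects. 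I would cite their Theorem directly for the pointwise statement on $F_\ZZ$ and spend my effort instead on upgrading it to the stated equivalence in $\Fun^{exc}(\Mod_\ZZ^{c,\omega},\Mod_\ZZ)$.

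The upgrade goes as follows. On the one hand, $\overline Q$ is by construction the sifted-colimit-preserving extension of $Q_{|F_\ZZ}$ (Definition \ref{defiQ}, using Corollary \ref{univnonabder}), and its restriction to $\Mod_\ZZ^{c,\omega}$ is excisive because $Q$ sends direct sums to direct sums up to quasi-isomorphism (the Johnson--McCarthy additivity lemma quoted above) and preserves the zero object — so it is a reduced, $1$-excisive functor, being additive and reduced on a category generated under finite colimits by $F_\ZZ$. On the other hand $D_1(\tilde\ZZ[-])$ is excisive essentially by definition of $D_1$. Two excisive functors out of $\Mod_\ZZ^{c,\omega}$ into a stable (hence differentiable) category agreeing on the compact projective generator $\ZZ$ — more precisely on all of $F_\ZZ$ — and compatibly with the additive structure, must agree: excisive functors on $\Mod_\ZZ^{c,\omega}$ are determined by their restriction to $F_\ZZ$ via $\Fun^{exc}(\Mod_\ZZ^{c,\omega},\cat C)\simeq \Fun^\amalg(F_\ZZ,\cat C)$, the stable analogue of Corollary \ref{univnonabder}. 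So I would produce the comparison map as a natural transformation $\overline Q\to D_1(\tilde\ZZ[-])$ (for instance the one induced by the universal property of $D_1$ applied to the augmentation/linearization map, or directly from the natural maps $Q'_*(A)\to \Omega^n\tilde\ZZ[\Sigma^n A]$ assembling the cubical complex), restrict to $F_\ZZ$, invoke Johnson--McCarthy there, and conclude by the uniqueness statement.

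**Main obstacle.** The hard part is the bookkeeping in the upgrade, \emph{not} the pointwise identification (which is Johnson--McCarthy). Specifically: (i) checking that $\overline Q_{|\Mod_\ZZ^{c,\omega}}$ really is excisive and not merely additive-and-reduced — this uses that $\Mod_\ZZ^{c,\omega}$ is generated under finite colimits and retracts by $\ZZ$, so that a reduced functor preserving finite direct sums up to equivalence is automatically $1$-excisive, which needs the precise form of the Johnson--McCarthy additivity lemma together with the fact that the extension $\overline Q$ preserves finite colimits in the relevant range; and (ii) constructing the comparison transformation in a way that is manifestly natural and manifestly restricts on $F_\ZZ$ to the Johnson--McCarthy equivalence, rather than to some other equivalence — the cleanest route is to observe that the linearization map $\tilde\ZZ[-]\to D_1\tilde\ZZ[-]$ together with the natural transformation $\overline Q\to \tilde\ZZ[-]$ (coming from $Q'_0(A)=\ZZ[A]$, i.e. the inclusion of the bottom of the complex, composed with the quotient to $Q_*$) factors through $D_1$ by excisiveness of $\overline Q$, and then to identify this factorization on $F_\ZZ$ with Johnson--McCarthy's comparison. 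I would present this last identification as a citation to \cite{johnsonlinearization} rather than reprove it.
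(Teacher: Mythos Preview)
Your approach is essentially the paper's: reduce to Johnson--McCarthy and then use the equivalence $\Fun^{exc}(\Mod_\ZZ^{c,\omega},\Mod_\ZZ)\simeq \Fun^L(\Mod_\ZZ^c,\Mod_\ZZ)\simeq \Fun^\amalg(F_\ZZ,\Mod_\ZZ)$ to conclude. The paper organizes it slightly differently by introducing an intermediate functor $\mathbf Q$, the naive extension of $Q$ to $\Ch_{\geq 0}(\ZZ)$ via Dold--Kan, because that is precisely the object about which \cite[Theorem 7.5]{johnsonlinearization} is stated: it gives $\mathbf Q\simeq D_1(\tilde\ZZ[-])$ directly. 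The paper then shows $\mathbf Q\simeq \overline Q$ by checking both lie in $\Fun^L(\Mod_\ZZ^c,\Mod_\ZZ)$ and both restrict to $Q$ on $F_\ZZ$.

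Your ``main obstacle'' (ii) is a red herring and is where you overcomplicate things. Once you know both functors are excisive on $\Mod_\ZZ^{c,\omega}$ and extend to colimit-preserving functors on $\Mod_\ZZ^c$, the equivalence of functor categories tells you that \emph{any} equivalence between their restrictions to $F_\ZZ$ promotes to an equivalence globally; you do not need to construct a specific comparison transformation on all of $\Mod_\ZZ^{c,\omega}$ and then verify that it restricts to Johnson--McCarthy's map. The paper does not attempt this. Incidentally, the map you describe (``inclusion of the bottom of the complex, composed with the quotient to $Q_*$'') goes $\tilde\ZZ[-]\to Q$, not $\overline Q\to \tilde\ZZ[-]$ as you wrote, so the factorization-through-$D_1$ argument you sketch would actually produce a map $D_1(\tilde\ZZ[-])\to \overline Q$ rather than the other direction. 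But again, none of this is needed.
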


In order to prove Johnson and McCarthy's theorem above, we need to relate $\overline Q$ and the ``naive'' extension of $Q$ to $\Mod_\ZZ^c$ that is defined in \cite{johnsonlinearization}. 

\begin{defi}
We let $\mathbf Q$ denote the naive extension of $Q$ to $\Ch_{\geq 0}(\ZZ)$. Explicitly, this is given by the following composite
\[\Ch_{\geq 0}(\ZZ)\xrightarrow{\on{DK}^{-1}}\mathrm s\Ab\xrightarrow{Q}\mathrm s\Ch_{\geq 0}(\ZZ)\xrightarrow{\on{DK}}\Ch_{\geq 0}\Ch_{\geq 0}(\ZZ)\xrightarrow{\on{Tot}}\Ch_{\geq 0}(\ZZ)\]
where $\on{DK}$ denotes the Dold Kan equivalence and $\on{Tot}$ denotes the total complex of a double complex.
\end{defi}

\begin{prop}
The functor $\mathbf{Q}$ from $\Ch_{\geq 0}(\ZZ)$ to itself preserves quasi-isomorphisms, and filtered colimits. 
\end{prop}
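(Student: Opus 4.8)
The plan is to reduce both claims to already-known properties of the more elementary functors out of which $\mathbf{Q}$ is built, using the fact that each stage in the defining composite
\[
\Ch_{\geq 0}(\ZZ)\xrightarrow{\on{DK}^{-1}}\mathrm s\Ab\xrightarrow{Q}\mathrm s\Ch_{\geq 0}(\ZZ)\xrightarrow{\on{DK}}\Ch_{\geq 0}\Ch_{\geq 0}(\ZZ)\xrightarrow{\on{Tot}}\Ch_{\geq 0}(\ZZ)
\]
is reasonably well understood. The outer two functors $\on{DK}^{\pm 1}$ are equivalences of categories, in particular they preserve both quasi-isomorphisms and all colimits, so they may be ignored for the purposes of this proposition. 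Likewise $\on{Tot}$ of a first-quadrant double complex is built out of direct sums in each total degree and preserves quasi-isomorphisms (for instance by the usual spectral sequence of a double complex, or by a direct filtration argument), and it commutes with filtered colimits since filtered colimits are exact and commute with the finite direct sums computing each total degree. So everything comes down to the middle functor $Q : \mathrm s\Ab\to \mathrm s\Ch_{\geq 0}(\ZZ)$, applied levelwise.

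For the middle functor, I would argue levelwise: a map of simplicial abelian groups is a weak equivalence (resp.\ a filtered colimit) iff it is so after applying $\pi_*$, and applying $Q$ levelwise, it suffices to control $Q : \Ab\to \Ch_{\geq 0}(\ZZ)$ together with the realization/totalization. For quasi-isomorphisms this is essentially the statement that $Q$ (equivalently $\bQ$, by an earlier proposition in the excerpt) carries a weak equivalence of simplicial abelian groups to a quasi-isomorphism; but in fact it is cleaner to invoke that $\bQ$ is a sifted-colimit preserving functor $\Mod_\ZZ^c\to\Mod_\ZZ$ (Definition \ref{defiQ} and Corollary \ref{univnonabder}), hence in particular preserves weak equivalences once we pass to the localizations, and that realization of a simplicial object is a sifted colimit. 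Concretely: $\mathbf{Q}$ as a functor on the $\infty$-category $\Mod_\ZZ^c = W^{-1}\Ch_{\geq 0}(\ZZ)$ agrees with $\bQ$, which is a well-defined functor of $\infty$-categories, so $\mathbf{Q}$ automatically preserves quasi-isomorphisms at the level of the underlying $1$-categorical functor — or rather, one checks that the $1$-categorical functor descends to the localization, which is exactly the assertion that it sends quasi-isomorphisms to quasi-isomorphisms.

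For filtered colimits, the key input is that $Q_n' = \ZZ[(-)^{C_n}]$ preserves filtered (indeed sifted) colimits, as was already used in the last lemma of Section \ref{section : QC}, hence so does each $Q_n$ (a direct summand of $Q_n'$, by Lemma \ref{lemm : Q is summand of Q'}) and so does $Q$ into $\Ch_{\geq 0}(\ZZ)$ since colimits of chain complexes are computed degreewise. Applying this levelwise, $Q : \mathrm s\Ab\to\mathrm s\Ch_{\geq 0}(\ZZ)$ preserves filtered colimits (colimits of simplicial objects are levelwise), and then composing with $\on{DK}$, $\on{Tot}$ and $\on{DK}^{-1}$, each of which preserves filtered colimits as noted above, we conclude that $\mathbf{Q}$ preserves filtered colimits. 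The main obstacle, such as it is, is bookkeeping: making sure the interchange of $\on{Tot}$ with filtered colimits is legitimate (it is, because $\on{Tot}$ in each degree is a finite direct sum of the bidegree pieces, these being first-quadrant) and making sure the "preserves quasi-isomorphisms" claim is stated at the level of $1$-categorical functors rather than accidentally using the $\infty$-categorical localization we are trying to construct — the honest content is precisely the levelwise statement about $Q$ on abelian groups, which one extracts from the fact that a levelwise quasi-isomorphism of bicomplexes induces a quasi-isomorphism on totalizations.
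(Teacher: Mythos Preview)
Your filtered-colimit argument is fine and matches the paper: $Q_n$ is a summand of $\ZZ[(-)^{C_n}]$, which preserves filtered colimits, and the outer functors $\on{DK}^{\pm 1}$, $\on{Tot}$ cause no trouble.

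The quasi-isomorphism argument, however, has a genuine gap. Your ``cleaner'' route invokes the identification of $\mathbf{Q}$ with $\bQ$ on $\Mod_\ZZ^c$, but that identification is precisely what the paper proves \emph{after} this proposition (in the proof of Theorem \ref{QisD1}), and its proof uses that $\mathbf{Q}$ descends to $\Mod_\ZZ^c$ --- i.e.\ that $\mathbf{Q}$ preserves quasi-isomorphisms. So this is circular. Your fallback, ``a levelwise quasi-isomorphism of bicomplexes induces a quasi-isomorphism on totalizations,'' is a true statement that does not apply here: a weak equivalence $A_\bullet\to B_\bullet$ of simplicial abelian groups is \emph{not} a levelwise isomorphism, so $Q(A_n)\to Q(B_n)$ has no reason to be a quasi-isomorphism for each $n$.

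The paper instead appeals to Johnson--McCarthy's theorem (their Theorem 7.5), which gives a natural quasi-isomorphism $\mathbf{Q}\simeq D_1(\tilde\ZZ[-])$ at the level of $\Ch_{\geq 0}(\ZZ)$; since $D_1(\tilde\ZZ[-]) = \on{colim}_n \Omega^n\tilde\ZZ[\Sigma^n-]$ is a filtered colimit of functors that visibly preserve quasi-isomorphisms, the claim follows. If you want a direct argument avoiding Johnson--McCarthy, one that actually works is: every weak equivalence in $\mathrm{sAb}$ is a simplicial homotopy equivalence (all objects are bifibrant), any functor applied levelwise preserves simplicial homotopies, and $\on{DK}$ then $\on{Tot}$ carry simplicial homotopy equivalences to chain homotopy equivalences. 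But that is not the argument you wrote.
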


\begin{proof}
The claim for filtered colimits follows from the fact that $Q$ preserves them. This is the case because, by Lemma \ref{lemm : Q is summand of Q'}, the functor $Q$ is degreewise a direct summand of $\ZZ[(-)^{C_n}]$, which clearly preserves filtered colimits.

As for quasi-isomorphisms, this follows from \cite[Theorem 7.5]{johnsonlinearization}, and the fact that $D_1\tilde\ZZ[-]$ preserves quasi-isomorphisms. For the latter, this is the case because it is a filtered colimit of quasi-isomorphism-preserving functors. 
\end{proof}
In particular, $\mathbf Q$ descends to a functor $\Mod_\ZZ^c\to \Mod_\ZZ$. We use the same notation for this new functor. 

\begin{nota}
We denote by $\Fun^{rex}(\cat C,\cat D)$ the full subcategory of $\Fun(\cat C,\cat D)$ on the right exact functors, that is, those that preserve finite colimits. Observe that if $\cat{D}$ is stable and $\cat{C}$ is pointed, we have an equivalence
\[\Fun^{rex}(\cat C,\cat D)\simeq \Fun^{exc}(\cat C,\cat D).\]
\end{nota}

\begin{theo}
There is an equivalence 
\[\mathbf Q\simeq D_1(\tilde{\ZZ}[-]).\]
in the category $\Fun^{rex}(\Mod_{\ZZ}^{c,\omega},\Mod_{\ZZ})\simeq \Fun^{exc}(\Mod_{\ZZ}^{c,\omega},\Mod_{\ZZ})$.
\end{theo}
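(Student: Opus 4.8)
The plan is to reduce the statement about $\mathbf Q$, the ``naive'' extension of $Q$ to $\Ch_{\geq 0}(\ZZ)$ (equivalently $\Mod_\ZZ^c$), to the already-known universal property of $\Mod_\ZZ^{c,\omega}$, and then invoke the Johnson--McCarthy computation of $Q$ in its linear (degree-one) range. First I would observe that both $\mathbf Q$ and $D_1(\tilde\ZZ[-])$, restricted to $\Mod_\ZZ^{c,\omega}$, are reduced excisive functors landing in the stable category $\Mod_\ZZ$: for $D_1(\tilde\ZZ[-])$ this is by construction (the formula $\on{colim}_n\Omega^n F(\Sigma^n X)$ always produces an excisive functor), and for $\mathbf Q$ we have just shown it preserves quasi-isomorphisms and filtered colimits and it preserves the zero object, so the only real content is excision, i.e. that $\mathbf Q$ sends pushout squares in $\Mod_\ZZ^{c,\omega}$ to pullback squares. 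This is precisely (a reformulation of) the statement that $\mathbf Q$ is ``linear'' in the Johnson--McCarthy sense, which is \cite[Theorem 7.5]{johnsonlinearization}; alternatively one deduces excision from the additivity lemma (the natural map $Q_*(U)\oplus Q_*(V)\to Q_*(U\oplus V)$ is a quasi-isomorphism) together with the fact that $\mathbf Q$ preserves the relevant colimits and the suspension, using that in a stable target a reduced functor that is additive and commutes with suspension and sequential colimits is automatically excisive. So the first key step is: \emph{both sides are objects of $\Fun^{exc}(\Mod_\ZZ^{c,\omega},\Mod_\ZZ)$.}

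Next I would use that $\Mod_\ZZ^{c,\omega}$ generates $\Mod_\ZZ^c$ under sifted (even filtered) colimits and that both $\mathbf Q$ and $D_1(\tilde\ZZ[-])$ are defined on all of $\Mod_\ZZ^c$ compatibly with these colimits, so that it is enough to produce an equivalence after restricting to $\Mod_\ZZ^{c,\omega}$ — this is why the theorem is phrased there. The comparison on $\Mod_\ZZ^{c,\omega}$ is then obtained by comparing with $\overline Q$: by Definition~\ref{defiQ}, $\overline Q$ is the sifted-colimit-preserving extension of $Q_{|F_\ZZ}$, and one checks (this is the content of the preceding propositions, or can be checked directly on $F_\ZZ$ where $\mathbf Q$ and $Q$ visibly agree since the Dold--Kan machinery is trivial on discrete free groups) that $\mathbf Q$ and $\overline Q$ agree on $\Mod_\ZZ^c$, both being the left Kan extension of $Q_{|F_\ZZ}$ up the chain $F_\ZZ\to \Mod_\ZZ^c$. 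Hence the present theorem is equivalent to Theorem~\ref{QisD1}, whose proof the excerpt promises, so strictly speaking I would phrase this final theorem as ``this is a restatement of Theorem~\ref{QisD1} via the identifications $\Fun^{rex}(\Mod_\ZZ^{c,\omega},\Mod_\ZZ)\simeq \Fun^{exc}(\Mod_\ZZ^{c,\omega},\Mod_\ZZ)$ and $\mathbf Q\simeq \overline Q$'' and do the honest work in proving Theorem~\ref{QisD1}.

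For the substantive comparison $\overline Q\simeq D_1(\tilde\ZZ[-])$ I would proceed as follows. Since $\overline Q$ is excisive, it is its own first derivative, so it suffices to produce a natural transformation $\tilde\ZZ[-]\to \overline Q$ (or a zig-zag through $Q$) on $\Mod_\ZZ^{c,\omega}$ and check it becomes an equivalence after applying $D_1$. On $F_\ZZ$, i.e. on finitely generated free abelian groups viewed as discrete objects, there is the canonical map $\tilde\ZZ[A] = \ZZ\otimes\Sigma^\infty A \to Q_0(A)\hookrightarrow Q_*(A)$ — in fact the augmentation-related map — and this is the universal arrow from $\tilde\ZZ[-]$ to an excisive functor in the appropriate sense because $Q$ was designed precisely to ``linearize'' $\ZZ[-]$; this is exactly the universal-property interpretation alluded to in the Remark after the definition of $Q_*'$. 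Extending by the sifted-colimit universal property (Corollary~\ref{univnonabder}) gives the natural transformation $\tilde\ZZ[-]\to \overline Q$ on $\Mod_\ZZ^c$, and applying $D_1$ gives $D_1(\tilde\ZZ[-])\to D_1\overline Q\simeq \overline Q$. To see this is an equivalence one checks it on generators: on $\ZZ\in F_\ZZ$ one must show $D_1(\tilde\ZZ[-])(\ZZ)\simeq \overline Q(\ZZ) = Q_*(\ZZ)$, i.e. that $\on{colim}_n\Omega^n\tilde\ZZ[\Sigma^n\ZZ]\simeq Q_*(\ZZ)\simeq \ZZ$ (the stable homotopy of the sphere, $\ZZ$-homology of spheres, all concentrated correctly) — this is the classical computation that $\HML_*(\ZZ)$ (equivalently $\THH_*(\ZZ)$) and the first Goodwillie derivative of $\ZZ[-]$ coincide, and it is where one genuinely uses \cite{johnsonlinearization}.

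The main obstacle, and the step I expect to require the most care, is establishing excision for $\mathbf Q$ on $\Mod_\ZZ^{c,\omega}$ — equivalently that $\mathbf Q$ is already ``linear'' so that no further derivative is needed — since this is the place where the genuinely non-formal input of Johnson--McCarthy (their Theorem 7.5, resting on the additivity of $Q$) enters; everything else is a matter of unwinding universal properties and Kan extensions. A secondary subtlety is making sure the identification $\mathbf Q\simeq \overline Q$ is clean, i.e. that the Dold--Kan/$\on{Tot}$ model for $\mathbf Q$ really computes the left Kan extension of $Q_{|F_\ZZ}$ and not merely something equivalent to $Q$ on discrete groups; this uses degreewise projectivity of $Q_n$ (Lemma~\ref{lemm : Q is summand of Q'}) to guarantee that the naive (1-categorical) construction is already derived, exactly as in the chain of lemmas proving $Q$ is a left Kan extension.
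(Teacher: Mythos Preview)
The paper's proof is a single sentence: ``This is \cite[Theorem 7.5]{johnsonlinearization}.'' Johnson and McCarthy's Theorem~7.5 is stated precisely for the naive extension $\mathbf Q$ (defined via Dold--Kan and total complex) and asserts exactly the equivalence $\mathbf Q\simeq D_1(\tilde\ZZ[-])$ on connective chain complexes, so there is nothing further to argue. Your proposal inverts the logical flow of the paper: you want to deduce the present theorem from Theorem~\ref{QisD1} (the statement about $\overline Q$) together with an identification $\mathbf Q\simeq\overline Q$, but in the paper Theorem~\ref{QisD1} is a \emph{consequence} of the present theorem, and the identification $\mathbf Q\simeq\overline Q$ is only established once one already knows from the present theorem that $\mathbf Q$ is right exact on $\Mod_\ZZ^{c,\omega}$.

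Your argument is not circular, because you do cite Johnson--McCarthy, but you invoke their theorem only for the weaker consequence that $\mathbf Q$ is excisive (or, in the third paragraph, for a single value), and then rebuild the full equivalence by passing through $\overline Q$. Since their Theorem~7.5 already gives the full natural equivalence $\mathbf Q\simeq D_1(\tilde\ZZ[-])$, this detour is superfluous. One genuine error: the claim in your third paragraph that $Q_*(\ZZ)\simeq\ZZ$ is false. By the main theorem of the paper $Q(\ZZ)\simeq\ZZ\otimes\ZZ$, whose homotopy groups are the integral homology of the Eilenberg--MacLane spectrum $H\ZZ$ and are far from concentrated in degree~$0$; the parenthetical about ``$\ZZ$-homology of spheres'' confuses $\ZZ\otimes\SS$ with $\ZZ\otimes\ZZ$.
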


\begin{proof}
This is \cite[Theorem 7.5]{johnsonlinearization}.
\end{proof}

We can now prove Theorem \ref{QisD1}

\begin{proof}[Proof of Theorem \ref{QisD1}]
Because of the previous theorem, it suffices to prove that $\mathbf Q\simeq \overline Q$ as functors on $\Mod_\ZZ^c$. 
The functor $Q$ preserves direct sums as a functor $F_\ZZ\to \Mod_\ZZ$. It follows that $\overline Q$ lives in $\Fun^L(\Mod_\ZZ^c,\Mod_\ZZ)$. The theorem and proposition above imply that $\mathbf Q$ preserves filtered colimits, and that its restriction to $\Mod_\ZZ^{c,\omega}$ preserves finite colimits, hence $\mathbf Q $ lives in $\Fun^L(\Mod_\ZZ^c,\Mod_\ZZ)$ as well. Therefore, by Corollary \ref{univnonabder} they are equivalent if and only if their restriction to $F_\ZZ$ are. But both these restrictions are just $Q$, by definition. 
\end{proof}

There is also a universal property of the restriction of $Q$ to $F_{\ZZ}$. Let $\Fun^{\oplus}(F_{\ZZ},\Mod_{\ZZ})$ be the category of functors that preserve finite direct sums. Let
\[\on{Add}:\Fun(F_{\ZZ},\Mod_{\ZZ})\to\Fun^{\oplus}(F_{\ZZ},\Mod_{\ZZ})\]
be the left adjoint to the inclusion (its existence follows from Theorem \ref{presentability} in the next section, the proof of which does not depend on the results in this section). Equipped with this functor, we have the following proposition. We were not able to find it in the literature but as is clear from the proof, it is an easy consequence of \cite[Theorem 7.5]{johnsonlinearization}.

\begin{prop}\label{QisAdd}
There is an equivalence
\[Q\simeq\on{Add}(\ZZ[-]).\]
in the category of functors $\Fun(F_\ZZ,\Mod_{\ZZ})$.
\end{prop}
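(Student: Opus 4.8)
The plan is to reduce the statement to Theorem~\ref{QisD1} together with an identification of $D_1$ in the "additivization" language. The key observation is that the functor category $\Fun(F_\ZZ,\Mod_\ZZ)$ only sees a small amount of information, so both $Q$ and $\Add(\ZZ[-])$ will be pinned down by their relationship to known universal constructions. Concretely, I would first note that since $F_\ZZ$ is a pointed additive $1$-category (or at least has finite coproducts and a zero object) and $\Mod_\ZZ$ is stable, the subcategory $\Fun^\oplus(F_\ZZ,\Mod_\ZZ)$ of direct-sum-preserving functors coincides with the subcategory of excisive (equivalently reduced $1$-excisive) functors on $F_\ZZ$: a functor out of an additive category preserves finite direct sums iff it sends the square exhibiting $X\oplus Y$ to a (co)cartesian square, using stability of the target. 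Hence $\Add$ is precisely the first Goodwillie derivative $D_1$ in this setting.

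Next I would relate this to the $D_1$ appearing in Theorem~\ref{QisD1}. There is an obvious restriction functor $\Fun(\Mod_\ZZ^{c,\omega},\Mod_\ZZ)\to \Fun(F_\ZZ,\Mod_\ZZ)$, and one should check that it is compatible with $D_1$, i.e.\ that restricting a functor and then additivizing agrees with additivizing (taking $D_1$) and then restricting. This compatibility holds because the explicit formula $D_1F(X)=\operatorname{colim}_n\Omega^n F(\Sigma^n X)$ is computed entirely within the subcategory generated by $F_\ZZ$ under suspension and loop, and more structurally because left Kan extension along $F_\ZZ\hookrightarrow \Mod_\ZZ^{c,\omega}$ intertwines the two $D_1$'s (both are left adjoints, so one compares the adjoints: restriction of excisive functors, which is easily seen to land in direct-sum-preserving functors, and the colimit formula is preserved by restriction along a coproduct-preserving inclusion). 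Either way, the upshot is that $\Add(\ZZ[-])$, as a functor on $F_\ZZ$, is the restriction of $D_1(\tilde\ZZ[-])$ to $F_\ZZ$ — where I am using that $\ZZ[-]$ and $\tilde\ZZ[-]$ differ only by the constant summand $\ZZ[0]$, which is killed by $D_1$ (a constant functor is already excisive and reduced-additivization sends it to $0$), so $\Add(\ZZ[-])\simeq\Add(\tilde\ZZ[-])$ and the latter agrees with $D_1(\tilde\ZZ[-])$ restricted to $F_\ZZ$.

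Finally I would invoke Theorem~\ref{QisD1}, which gives $\overline Q\simeq D_1(\tilde\ZZ[-])$ on $\Mod_\ZZ^{c,\omega}$; restricting to $F_\ZZ$ and using that $\overline Q|_{F_\ZZ}\simeq Q|_{F_\ZZ}$ by construction (Definition~\ref{defiQ}), we conclude $Q\simeq D_1(\tilde\ZZ[-])|_{F_\ZZ}\simeq \Add(\ZZ[-])$ in $\Fun(F_\ZZ,\Mod_\ZZ)$.

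The step I expect to require the most care is the compatibility of $D_1$ with restriction along $F_\ZZ\hookrightarrow\Mod_\ZZ^{c,\omega}$ — one must be slightly careful that $\Add$ on $F_\ZZ$ (a finite-coproduct-preservation condition) really is the same as $D_1$ (an excision condition), and that the left-adjoint $\Add$ genuinely commutes with the restriction functor rather than only a lax comparison. Once the correct adjoint diagram is set up, though, this is formal: the restriction functor $\Fun^{exc}(\Mod_\ZZ^{c,\omega},\Mod_\ZZ)\to\Fun^\oplus(F_\ZZ,\Mod_\ZZ)$ is well-defined (an excisive functor, restricted to an additive subcategory, preserves finite direct sums), and its composite with $\Add$ applied to $\ZZ[-]$ computes the claimed equivalence. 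Alternatively, one sidesteps this entirely by running the same universal-property argument as in the proof of Theorem~\ref{QisD1}: both $Q$ and $\Add(\ZZ[-])$, viewed on $F_\ZZ$, extend to direct-sum-preserving, hence colimit-preserving functors on $\Mod_\ZZ^c$ (using Corollary~\ref{univnonabder}), and the extensions agree since $\overline{\Add(\ZZ[-])}\simeq D_1(\tilde\ZZ[-])\simeq \overline Q$, so the restrictions agree.
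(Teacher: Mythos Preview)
Your proposal is correct and the underlying strategy matches the paper's: both reduce to Theorem~\ref{QisD1} by relating $\Add$ on $F_\ZZ$ to $D_1$ on $\Mod_\ZZ^{c,\omega}$ through an adjoint comparison. The paper's execution is exactly the clean ``alternative'' you sketch at the end: one writes a commutative square of \emph{left} adjoints with left Kan extension on the verticals and $\Add$, respectively $D_1\circ P$ (where $P$ is reduction), on the horizontals, observes it commutes because the square of right adjoints (restriction and full inclusion) trivially does, and then traces $\ZZ[-]$ around --- its Kan extension is again $\ZZ[-]$, $P(\ZZ[-])=\tilde\ZZ[-]$, and $D_1(\tilde\ZZ[-])\simeq Q$ by Johnson--McCarthy.

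This sidesteps the difficulty you rightly flagged in your main line. The framing ``restriction commutes with $D_1$'' mixes a right adjoint with a left adjoint and is neither formal nor true for arbitrary functors, and the Goodwillie colimit formula cannot be run inside $F_\ZZ$ since $F_\ZZ$ has no suspensions. The reason it works for $\ZZ[-]$ specifically is that $\ZZ[-]$ on $\Mod_\ZZ^{c,\omega}$ is already the left Kan extension of its restriction to $F_\ZZ$ --- but once you use that fact, you are running the paper's argument.
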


\begin{proof}
Let us consider the following commutative diagram of left adjoints
\[
\xymatrix{
\Fun(F_{\ZZ},\Mod_{\ZZ})\ar[rr]^{\on{Add}}\ar[d]&&\Fun^\oplus(F_{\ZZ},\Mod_{\ZZ})\ar[d]^{\simeq}\\
\Fun(\Mod_{\ZZ}^{c,\omega},\Mod_{\ZZ})\ar[r]_{P}&\Fun^*(\Mod_{\ZZ}^{c,\omega},\Mod_{\ZZ})\ar[r]_{D_1}&\Fun^{exc}(\Mod_{\ZZ}^{c,\omega},\Mod_{\ZZ})
}
\]
The vertical functors are given by left Kan extension and the horizontal functors are the left adjoint to the inclusions. The diagram commutes since the corresponding diagram of right adjoints obviously commutes. Let us start with $\ZZ[-]$ in the top left corner. The left Kan extension of $\ZZ[-]$ to $\Mod_{\ZZ}^{c,\omega}$ is again the functor $\ZZ[-]$. If we apply $P$ to this we obtain the functor $\tilde{\ZZ}[-]$ and then if we apply $D_1$ we obtain $Q$ by the theorem of Johnson and McCarthy above. By the commutativity of the diagram, we obtain the desired result.
\end{proof}
Let $\Ab^{\omega}$ denote the full subcategory of $\Ab$ on finitely generated modules. Then we can deduce from the above proposition a similar result for the restriction of $Q$ to $\Ab^{\omega}$ and $\Ab_\kappa$. 

\begin{coro}\label{QisAdd2}
There is an equivalence $Q\simeq \on{Add}(\ZZ[-])$ in the category  $\Fun(\Ab^{\omega},\Mod_\ZZ)$, as well as in $\Fun(\Ab_\kappa,\Mod_\ZZ)$. 
\end{coro}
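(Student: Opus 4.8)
\emph{Plan.} Let $\mathcal A$ denote either $\Ab^\omega$ or $\Ab_\kappa$; in both cases $F_\ZZ\subseteq\mathcal A$ is a full subcategory closed under finite direct sums, since it contains $0$ and $\ZZ^m\oplus\ZZ^n\cong\ZZ^{m+n}$. The plan is to deduce the statement from Proposition~\ref{QisAdd} by a corepresentability argument transported along $F_\ZZ\hookrightarrow\mathcal A$. Two preliminary facts are needed. First, $Q$ and $\ZZ[-]$, regarded as functors $\mathcal A\to\Mod_\ZZ$, are both left Kan extended from their restrictions to $F_\ZZ$: for each $Q_n'=\ZZ[(-)^{C_n}]$ --- in particular for $\ZZ[-]=Q_0'$ --- this is the content of the last lemma of Section~\ref{section : QC}, whose proof identifies $Q_n'(A)$ with $\on{colim}_{(P,f)\in F_\ZZ/A}Q_n'(P)$ for \emph{every} $A\in\Ab$ (using that $F_\ZZ/A$ is sifted and $Q_n'$ preserves sifted colimits), hence in particular for every $A\in\mathcal A$; since $Q_n$ is a retract of $Q_n'$ by Lemma~\ref{lemm : Q is summand of Q'}, and $Q$ is a bounded-below complex of projective functors --- so that the $\infty$-categorical left Kan extension agrees with the $1$-categorical one, formed degreewise, exactly as in loc.\ cit.\ --- the same holds for $Q$. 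Second, $Q|_{\mathcal A}$ lies in $\Fun^\oplus(\mathcal A,\Mod_\ZZ)$: indeed $Q$ preserves finite direct sums, by the quasi-isomorphism $Q_*(U)\oplus Q_*(V)\xrightarrow{\sim}Q_*(U\oplus V)$ of \cite{johnsonlinearization} together with the evident identity $Q_*(0)=0$.

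Granting these, fix $G\in\Fun^\oplus(\mathcal A,\Mod_\ZZ)$; its restriction $G|_{F_\ZZ}$ lies in $\Fun^\oplus(F_\ZZ,\Mod_\ZZ)$ because $F_\ZZ\hookrightarrow\mathcal A$ preserves finite direct sums. One then has a chain of equivalences
\begin{align*}
\map(\on{Add}(\ZZ[-]),G)&\simeq\map(\ZZ[-]|_{F_\ZZ},G|_{F_\ZZ})\\
&\simeq\map(\on{Add}(\ZZ[-]|_{F_\ZZ}),G|_{F_\ZZ})\\
&\simeq\map(Q|_{F_\ZZ},G|_{F_\ZZ})\simeq\map(Q,G),
\end{align*}
natural in $G$, where the first equivalence uses the adjunction $\on{Add}\dashv(\Fun^\oplus\hookrightarrow\Fun)$, the adjunction $\mathrm{Lan}\dashv\mathrm{res}$ and the fact that $\ZZ[-]$ is left Kan extended from $F_\ZZ$; the second is the defining adjunction of $\on{Add}$ on $F_\ZZ$; the third is Proposition~\ref{QisAdd}; and the last runs the first set of adjunctions in reverse, using now that $Q$ is left Kan extended from $F_\ZZ$. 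Since $Q|_{\mathcal A}$ itself lies in $\Fun^\oplus(\mathcal A,\Mod_\ZZ)$, this exhibits $Q$ and $\on{Add}(\ZZ[-])$ as corepresenting the same functor on $\Fun^\oplus(\mathcal A,\Mod_\ZZ)$, so they are equivalent, which gives the claim in $\Fun(\mathcal A,\Mod_\ZZ)$.

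I do not expect a serious obstacle: modulo Proposition~\ref{QisAdd} the argument is formal. The two points deserving a little care are checking that $Q$ and $\ZZ[-]$ are genuinely left Kan extended from $F_\ZZ$ over the small but non-cocomplete categories $\Ab^\omega$ and $\Ab_\kappa$ --- for which the pointwise Kan extension formula of Section~\ref{section : QC} already suffices, as it only involves the comma categories $F_\ZZ/A$, which do not change --- and keeping the bookkeeping of the various adjunctions in the displayed chain straight. A more streamlined packaging of the same argument is to observe that $\on{Add}$ commutes with left Kan extension along $F_\ZZ\hookrightarrow\mathcal A$, via the mate of the (evidently commuting) square of right adjoints built from the restriction functors and the inclusions $\Fun^\oplus\hookrightarrow\Fun$.
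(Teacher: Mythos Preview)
Your proof is correct and essentially the same as the paper's. The paper packages the argument as a commutative square of left adjoints (left Kan extension horizontally, $\on{Add}$ vertically), observes that the square of right adjoints evidently commutes, and then chases $\ZZ[-]$ around it; your corepresentability chain is exactly the unpacked form of that square, and you yourself note this ``more streamlined packaging'' at the end.
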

\begin{proof}
Consider the following commutative diagram of left adjoints~: 
\[
\xymatrix{\Fun(F_\ZZ,\Mod_\ZZ) \ar[r] \ar[d] & \Fun(\Ab^{\omega},\Mod_\ZZ) \ar[d]\\
 \Fun^\oplus(F_\ZZ,\Mod_\ZZ) \ar[r] & \Fun^\oplus(\Ab^{\omega},\Mod_\ZZ)}
 \]
The horizontal arrows are left adjoint to restriction, and the vertical arrows are left adjoint to inclusion. Since the diagram of right adjoints obviously commutes, this diagram does too. 

Note that $\ZZ[-]: \Ab^{\omega}\to \Mod_\ZZ$ is the left Kan extension of its restriction to $F_\ZZ$ (the proof is the same as for $Q$), so we can compute its image in $\Fun^\oplus(\Ab^{\omega},\Mod_\ZZ)$ by computing the image of $\ZZ[-]$ in the top left hand corner. But this factors through $\Fun^\oplus(F_\ZZ, \Mod_\ZZ)$ so by the previous proposition, this image is the same as the image of $Q$. 

But $Q_{\mid \Ab^{\omega}}$ is also the left Kan extension of its restriction to $F_\ZZ$, so if we start from $Q$ in the top left hand corner, we get $Q$ in $\Fun(\Ab^{\omega},\Mod_\ZZ)$, which is already additive, so we get $Q$ in the bottom right hand corner, which is what we wanted. 

The same proof works verbatim for $\Ab_\kappa$. 
\end{proof}
All the reasonable candidates for $Q$ having been identified, we can now use a common notation for all of them.

\begin{nota}
We let $Q$ denote any of the functors $Q,\overline Q,\mathbf Q, Q_{\mid F_\ZZ}$.
\end{nota}

Here is a summary of this work on the $Q$-construction~: 

There is a functor $Q: \Mod_\ZZ^c\to \Mod_\ZZ$ whose restriction to $\Ab$ is the classical $Q$-construction, which also agrees, on the level of $\Ch_{\geq 0}(\ZZ)$, with the naive extension of $Q$ using the Dold-Kan equivalence. This functor is the left Kan extension of its restriction to $F_\ZZ$ and therefore to all intermediate full subcategories ($\Ab,\Ab^\omega,\Ab_\kappa,\Mod_\ZZ^{c,\omega}$). 

This functor is the ``rexification'' of the functor
\[A\mapsto \ZZ\otimes \Sigma^\infty A\]
and when restricted to $F_\ZZ,\Ab^\omega, \Ab_\kappa$, it is the additivization of $\ZZ[-]$ (or $\tilde\ZZ[-]$).

\section{The symmetric monoidal structure}\label{sec : sym mon structure}

In the following, we denote by $\mathcal{A}$ a small symmetric monoidal category. We give ourselves a set of diagrams
\[f_u:K_{u}^{\triangleleft}\to \mathcal{A}\]
indexed by a set $U$ where the categories $K_u$ are finite. For $\cat{C}$ a category with limits, we denote by $\Fun^U(\mathcal{A},\cat{C})$ the category of functors $F:\mathcal{A}\to\cat{C}$ sending the diagrams in $U$ to limit diagrams.

\begin{example}\label{examples}
\begin{enumerate}
\item If $U=\{u\}$ is a singleton, $K_u=\varnothing$ and $f_u:[0]=\varnothing^{\triangleleft}\to \mathcal{A}$ is the map hitting the object $a\in\mathcal{A}$, then $\Fun^U(\mathcal{A},\cat{C})$ is the category of functors sending $a$ to the terminal object of $\cat{C}$.
\item Assume that $\mathcal{A}$ has finite products, we can take $U$ to be the set of pairs of objects of $\mathcal{A}$ and for $(a,a')\in U$ we consider the diagram
\[a\leftarrow a\times a'\rightarrow a'\]
in $\mathcal{A}$. Then $\Fun^U(\mathcal{A},\cat{C})$ is the category of functors that preserve binary products. If we add the diagram $*= \emptyset^\triangleleft$ with value the terminal object of $\mathcal A$, we get the category of functors preserving arbitrary finite products.
\item Take $U$ to be the set of cartesian squares in $\mathcal{A}$. We can view a cartesian square as a diagram $K^{\triangleleft}\to\mathcal{A}$ with $K$ the category freely generated by the graph $\bullet\rightarrow\bullet\leftarrow\bullet$. In that case, the category $\Fun^U(\mathcal{A},\cat{C})$ is the category of functors that preserve cartesian squares.
\item Using cocartesian squares instead of cartesian squares in the previous example, the category $\Fun^U(\mathcal{A},\cat{C})$ is the category of functors sending cocartesian squares to cartesian squares. 
\item Combining the first and fourth example, we see that the category of excisive functors is a category of the form $\Fun^U(\mathcal{A},\cat{C})$ for a certain choice of $U$.
\end{enumerate}

\end{example}

\begin{theo}\label{presentability}
Let $\cat{C}$ be a presentable category. 
\begin{enumerate}
\item The category $\Fun^U(\mathcal{A},\cat{S})$ is presentable.
\item The canonical functor
\[\Fun^U(\mathcal{A},\cat{S})\otimes\cat{C}\to\Fun^U(\mathcal{A},\cat{C})\]
is an equivalence. In particular, the category $\Fun^U(\mathcal{A},\cat{C})$ is presentable.

\item The inclusion
\[\Fun^U(\mathcal{A},\cat{C})\to \Fun(\mathcal{A},\cat{C})\]
has a left adjoint. 
\end{enumerate}
\end{theo}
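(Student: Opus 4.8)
The plan is to reduce everything to the case $\cat{C} = \cat{S}$ by a tensoring argument, and to handle $\cat{S}$ by recognizing $\Fun^U(\mathcal{A},\cat{S})$ as an accessible localization of the presheaf category. First, for part (1), I would observe that $\Fun(\mathcal{A},\cat{S})$ is presentable (since $\mathcal{A}$ is small), and that $\Fun^U(\mathcal{A},\cat{S})$ is carved out by requiring, for each $u \in U$, that the composite $K_u^{\triangleleft} \xrightarrow{f_u} \mathcal{A} \xrightarrow{F} \cat{S}$ be a limit diagram. Since each $K_u$ is finite, being a limit diagram over $K_u^{\triangleleft}$ is a condition detecting whether $F(f_u(\ast))$ is local with respect to a single map between representable-ish objects: concretely, the limit $\lim_{K_u} F\circ f_u$ is computed as a finite limit, hence is an accessible functor of $F$, and the condition "$F(\text{cone point}) \xrightarrow{\sim} \lim_{K_u} F\circ f_u$" is the condition of being local with respect to a set $S$ of morphisms in $\Fun(\mathcal{A},\cat{S})$ (built from the representables $\map_{\mathcal{A}}(f_u(k),-)$ and their finite (co)limits over $K_u$). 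Therefore $\Fun^U(\mathcal{A},\cat{S}) = S^{-1}\Fun(\mathcal{A},\cat{S})$ is an accessible localization of a presentable category, hence presentable, and part (3) for $\cat{C} = \cat{S}$ follows immediately since localizations come with a left adjoint to the inclusion.

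Next, for part (2), I would use that $\Fun(\mathcal{A},\cat{C}) \simeq \Fun(\mathcal{A},\cat{S})\otimes \cat{C}$ for any presentable $\cat{C}$ (Lurie, $\cat{Pr}^L$ is symmetric monoidal and $\Fun(\mathcal{A},-)$ with small $\mathcal{A}$ is computed by tensoring with the presheaf category; more precisely $\Fun(\mathcal{A},\cat{C}) \simeq \mathcal{P}(\mathcal{A}\op)\otimes\cat{C}$-style identification). Under this identification I want to check that the subcategory $\Fun^U(\mathcal{A},\cat{C})$ corresponds to $\Fun^U(\mathcal{A},\cat{S})\otimes\cat{C}$. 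Since tensoring with $\cat{C}$ in $\cat{Pr}^L$ preserves localizations — an accessible localization $S^{-1}\cat{D}$ has $S^{-1}\cat{D}\otimes\cat{C} \simeq (S\otimes\cat{C})^{-1}(\cat{D}\otimes\cat{C})$, where $S\otimes\cat{C}$ denotes the image of the generating maps — the object $\Fun^U(\mathcal{A},\cat{S})\otimes\cat{C}$ is the localization of $\Fun(\mathcal{A},\cat{C})$ at the same generating maps $S$, now viewed inside the bigger functor category. It then remains to identify $S$-local objects of $\Fun(\mathcal{A},\cat{C})$ with functors sending the $f_u$ to limit diagrams; this is where I need that the generating morphisms $S$ really do detect the limit condition after tensoring, which is a routine check using that finite limits in $\cat{C}$ are computed compatibly (the finite diagram $K_u$ is fixed) and that mapping spaces out of the tensored generators compute the relevant comparison map. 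This gives the equivalence in (2), and presentability of $\Fun^U(\mathcal{A},\cat{C})$ as a consequence. Part (3) in general then follows: the equivalence in (2) is compatible with the inclusions into $\Fun(\mathcal{A},\cat{C})\simeq\Fun(\mathcal{A},\cat{S})\otimes\cat{C}$, and the inclusion $\Fun^U(\mathcal{A},\cat{S})\otimes\cat{C} \to \Fun(\mathcal{A},\cat{S})\otimes\cat{C}$ has a left adjoint obtained by tensoring the localization functor for $\cat{S}$ with $\id_{\cat{C}}$ (left adjoints are preserved by $-\otimes\cat{C}$ in $\cat{Pr}^L$).

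The main obstacle I anticipate is the bookkeeping in part (2): precisely exhibiting the set $S$ of morphisms so that $S$-locality is literally the condition of sending each $f_u$ to a limit diagram, and checking this identification is stable under $-\otimes\cat{C}$. The subtlety is that "being a limit cone over $K_u^{\triangleleft}$" must be repackaged as an orthogonality condition against a single map; the natural candidate is the map $\on{colim}\bigl(\map_{\mathcal{A}}(f_u(-),-) : K_u\op \to \Fun(\mathcal{A},\cat{S})\bigr) \to \map_{\mathcal{A}}(f_u(\ast),-)$ induced by the cone, and one must verify that $F$ is local against this map iff $F(f_u(\ast)) \xrightarrow{\sim} \lim_{K_u} F f_u$, using the co-Yoneda lemma and finiteness of $K_u$. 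Once this is set up cleanly for $\cat{S}$, the passage to arbitrary presentable $\cat{C}$ is formal via the symmetric monoidal structure on $\cat{Pr}^L$ and the fact that $\map_{\Fun(\mathcal{A},\cat{C})}$ against tensored generators is computed by $\map_{\cat{C}}$ of the corresponding $\cat{S}$-valued mapping objects.
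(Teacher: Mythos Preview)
Your proposal is correct, and for part (1) it is essentially identical to the paper's argument: both recognize $\Fun^U(\mathcal{A},\cat{S})$ as the local objects for the set of maps
\[
\on{colim}_{i\in K_u}\map_{\mathcal{A}}(f_u(i),-)\to \map_{\mathcal{A}}(f_u(\infty_u),-).
\]

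For part (2), however, the paper takes a different and somewhat cleaner route. Rather than invoking that $-\otimes\cat{C}$ preserves accessible localizations and then identifying the $S$-local objects in $\Fun(\mathcal{A},\cat{C})$ (the step you flag as the ``main obstacle''), the paper uses the formula $\cat{E}\otimes\cat{C}\simeq\Fun^R(\cat{C}\op,\cat{E})$ directly. Under the swap
\[
\Fun^R(\cat{C}\op,\Fun(\mathcal{A},\cat{S}))\simeq \Fun(\mathcal{A},\Fun^R(\cat{C}\op,\cat{S}))\simeq \Fun(\mathcal{A},\cat{C}),
\]
a functor $\cat{C}\op\to\Fun(\mathcal{A},\cat{S})$ lands pointwise in $\Fun^U(\mathcal{A},\cat{S})$ if and only if the corresponding $\mathcal{A}\to\cat{C}$ sends each $f_u$ to a limit diagram, simply because limits in $\Fun^R(\cat{C}\op,\cat{S})$ are computed pointwise. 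This bypasses entirely the bookkeeping you anticipate about transporting $S$-locality through the tensor product. Your approach would work, but the verification that $S$-locality after tensoring coincides with the $U$-condition is precisely the content the paper's swap argument gets for free.

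For part (3), you deduce the left adjoint from part (2) by tensoring the localization functor with $\id_{\cat{C}}$. The paper instead appeals directly to the adjoint functor theorem: the inclusion preserves limits (limits commute with limits) and is accessible (for large enough $\kappa$, $\kappa$-filtered colimits commute with $\kappa$-small limits). This is logically independent of part (2) and slightly more self-contained.
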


\begin{proof}
The first part comes from the observation that $\Fun^U(\mathcal{A},\cat{S})$ is the category of local objects with respect to the maps
\[\on{colim}_{i\in K_u} \map_{\mathcal{A}}({f_u(i)},-)\to \map_{\mathcal{A}}(f_u(\infty_u),-),\]
where $\infty_u$ denotes the tip of the cone $K_u^{\triangleleft}$. This immediatly gives the presentability and the fact that the inclusion $\Fun^U(\mathcal A,\cat S)\to \Fun(\mathcal A,\cat S)$ has a left adjoint, which is given by the localization.

In order to prove the second part, we will use the equivalence  $\cat E\otimes\cat C \simeq \Fun^R(\cat C\op,\cat E)$, for presentable categories $\cat{E,C}$ (see \cite[4.8.1.17]{HA}). Let us consider the full inclusion
\[
\iota : \Fun^R(\cat{C}\op, \Fun^U(\mathcal A,\cat S))\subset  \Fun^R(\cat C\op, \Fun(\mathcal A,\cat S)).\]
The essential image is the category of functors $\cat{C} \op\to \Fun(\mathcal A,\cat S)$ with the property that for each object $c\in\cat{C}\op$ the value of the functor lands in $\Fun^U(\mathcal A,\cat S)$. On the other hand, there is an equivalence
\[\Fun^R(\cat C\op, \Fun(\mathcal A,\cat S))\simeq \Fun(\mathcal{A}, \Fun^R(\cat C\op,\cat S)).\]
Seen through this equivalence, the essential image of the functor $\iota$ above is identified with the full subcategory
\[\Fun^U(\mathcal{A}, \Fun^R(\cat C\op,\cat S))\subset \Fun(\mathcal{A}, \Fun^R(\cat C\op,\cat S)).\]
Since $\Fun^R(\cat C\op,\cat S)$ is equivalent to $\cat{C}$, this full subcategory is the category $\Fun^U(\mathcal{A},\cat{C})$.

Presentability of $\Fun^U(\mathcal A,\cat C)$ immediately follows from this description. 

The third part follows from the fact that the inclusion preserves limits (as limits commute with limits) and is accessible (as in any presentable category, for $\kappa$ large enough, $\kappa$-filtered colimits commute with $\kappa$-small limits), and from the adjoint functor theorem. 
\end{proof}

\subsection{Localization and Day convolution}

Now, we assume that $\mathcal{A}$ and $\cat{C}$ are symmetric monoidal categories. In that case, there is a symmetric monoidal structure on the functor category $\Fun(\mathcal{A},\cat{C})$ called the Day convolution structure. This symmetric monoidal structure enjoys the following universal property (see \cite[2.2.6.8]{HA})~:

The data of a lax symmetric monoidal functor $\cat B\to\Fun(\mathcal{A},\cat{C})$ is equivalent to the data of a lax symmetric monoidal functor $\cat B\times\mathcal{A}\to\cat{C}$.

\begin{nota}
We will let $\otimes^{Day}$ denote the Day tensor product.
\end{nota}

We make the following additional assumption on $\mathcal{A}$ and $U$.

\begin{assu}
The category $\mathcal{A}$ has a symmetric monoidal structure and for all diagrams $f_u:K_u^{\triangleleft}\to\mathcal{A}$ and all object $a$ of $\mathcal{A}$, the diagram $a\otimes f_u$ is also in $U$ (or equivalent to a diagram of $U$). 
\end{assu} 

\begin{theo}\label{locDay}
Let $\cat{C}$ be a symmetric monoidal presentable category.
\begin{enumerate}
\item The category $\Fun(\mathcal{A},\cat{C})$ equipped with the Day convolution tensor product is a symmetric monoidal presentable category.
\item There is a unique symmetric monoidal structure on $\Fun^U(\mathcal{A},\cat{C})$ such that the localization functor
\[\Fun(\mathcal{A},\cat{C})\to \Fun^U(\mathcal{A},\cat{C})\]
is symmetric monoidal.
\item The inclusion
\[\Fun^U(\mathcal{A},\cat{C})\to \Fun(\mathcal{A},\cat{C})\]
is lax symmetric monoidal.
\end{enumerate}
\end{theo}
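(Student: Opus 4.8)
The plan is to deduce all three parts from the general theory of symmetric monoidal localizations, specifically \cite[Proposition 2.2.1.9]{HA}, which says that if $L:\cat{D}\to\cat{D}$ is a localization functor on a symmetric monoidal category $\cat{D}$ whose local equivalences are stable under tensoring by arbitrary objects, then $L\cat{D}$ inherits a unique symmetric monoidal structure making $L$ symmetric monoidal, and the fully faithful right adjoint (the inclusion of local objects) becomes lax symmetric monoidal. So the real content is to verify this hypothesis in our situation.

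First I would establish part (1): the Day convolution on $\Fun(\mathcal{A},\cat{C})$ is symmetric monoidal by \cite[2.2.6.17]{HA}, and it is presentable because $\mathcal{A}$ is small and $\cat{C}$ is presentable symmetric monoidal, so that the Day tensor product preserves colimits in each variable (again \cite[2.2.6.X]{HA}) — this gives a presentable symmetric monoidal category. Then, for parts (2) and (3), by Theorem \ref{presentability}(3) the inclusion $\Fun^U(\mathcal{A},\cat{C})\hookrightarrow\Fun(\mathcal{A},\cat{C})$ admits a left adjoint $L$, exhibiting $\Fun^U(\mathcal{A},\cat{C})$ as a localization; concretely, from the proof of Theorem \ref{presentability}, the $L$-local objects in the case $\cat{C}=\cat{S}$ are exactly those inverting the maps $\on{colim}_{i\in K_u}\map_{\mathcal{A}}(f_u(i),-)\to \map_{\mathcal{A}}(f_u(\infty_u),-)$, and for general presentable $\cat{C}$ this persists because $\Fun^U(\mathcal{A},\cat{C})\simeq \Fun^U(\mathcal{A},\cat{S})\otimes\cat{C}$.

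The key step, and the main obstacle, is to show that the class $S$ of $L$-local equivalences in $\Fun(\mathcal{A},\cat{C})$ is stable under $-\otimes^{Day} G$ for any $G\in \Fun(\mathcal{A},\cat{C})$. Since $S$ is generated under colimits and the two-out-of-three property by the set of maps $s_u: j_{f_u(\infty_u)}\to \on{colim}_{i\in K_u} j_{f_u(i)}$, where $j_a = \map_{\mathcal{A}}(a,-)\otimes \mathbf 1_{\cat{C}}$ is the Day-convolution representable twisted by the unit — and since $-\otimes^{Day}G$ preserves colimits — it suffices to check that each $s_u\otimes^{Day} j_a$ lies in $S$ for $a\in\mathcal{A}$, and then that each $s_u\otimes^{Day}G$ lies in $S$ for general $G$ by writing $G$ as a colimit of such twisted representables. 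Now $j_a\otimes^{Day}j_b\simeq j_{a\otimes b}$ by the defining property of Day convolution (representables are sent to representables along $\otimes:\mathcal{A}\times\mathcal{A}\to\mathcal{A}$), so $s_u\otimes^{Day}j_a$ is precisely the comparison map associated to the diagram $a\otimes f_u$, which lies in $U$ (or is equivalent to a diagram in $U$) by the Assumption above — hence it is in $S$. Having verified this, \cite[Proposition 2.2.1.9]{HA} delivers the unique symmetric monoidal structure on $\Fun^U(\mathcal{A},\cat{C})$ with $L$ symmetric monoidal (part (2)), and the fact that the right adjoint inclusion is then automatically lax symmetric monoidal (part (3)). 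The only slightly delicate point to get right is the bookkeeping identifying $s_u\otimes^{Day}j_a$ with the comparison map for $a\otimes f_u$, using that Day convolution commutes with the relevant colimits and is computed on representables via $\otimes_{\mathcal{A}}$; once that identification is in place, everything else is formal.
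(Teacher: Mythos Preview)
There is a genuine gap when $\cat{C}\neq\cat{S}$. Your generating set of local equivalences is too small: with $j_a=\map_{\mathcal{A}}(a,-)\otimes\mathbf{1}_{\cat{C}}$, an object $F$ is local with respect to the $s_u$'s precisely when each $\map_{\cat{C}}(\mathbf{1}_{\cat{C}},F(-))$ lies in $\Fun^U(\mathcal{A},\cat{S})$, which is strictly weaker than $F\in\Fun^U(\mathcal{A},\cat{C})$ unless $\mathbf{1}_{\cat{C}}$ detects equivalences. The correct generating set is $\{s_u^c\}$ with $s_u^c$ the analogous map built from $j_a^c=\map_{\mathcal{A}}(a,-)\otimes c$, for $c$ ranging over (a small generating set of) $\cat{C}$. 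The same issue resurfaces in your last reduction: not every $G\in\Fun(\mathcal{A},\cat{C})$ is a colimit of the $j_a$'s; one needs the $j_a^c$'s for varying $c$ as well. Once you carry the $c$-coefficients through, the computation works: $s_u^c\otimes^{Day} j_a^d$ is the comparison map for $a\otimes f_u$ twisted by $c\otimes d$, hence lies in $S$ by the Assumption, and the general case follows since the $j_a^c$'s do generate under colimits. (Minor point: your $s_u$ is written with the arrow reversed relative to the localizing maps in the paper.)

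The paper avoids exactly this bookkeeping by first treating the case $\cat{C}=\cat{S}$, where your argument goes through verbatim because the representables genuinely generate and suffice to describe the local objects. It then proves a separate lemma identifying $\Fun(\mathcal{A},\cat{S})\otimes\cat{C}\simeq\Fun(\mathcal{A},\cat{C})$ as \emph{symmetric monoidal} categories (Day on the right, coproduct-of-commutative-algebras on the left), and concludes by recognizing the localization $\Fun(\mathcal{A},\cat{C})\to\Fun^U(\mathcal{A},\cat{C})$ as $(\Fun(\mathcal{A},\cat{S})\to\Fun^U(\mathcal{A},\cat{S}))\otimes\id_{\cat{C}}$, which inherits a symmetric monoidal structure from the $\cat{S}$ case. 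Your direct route, once repaired, bypasses that comparison lemma and is more self-contained; the paper's route isolates the nontrivial verification in the universal case and then transports it formally.
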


To establish this, we will need a couple of lemmas.  First, recall the following result due to Glasman.

\begin{prop}{\cite[Section 3]{Glasman}}\label{SymMonYoneda}
Suppose $C$ is a small symmetric monoidal category, and let $\Fun(C\op,\cat S)$ have the Day convolution monoidal structure. Then there is a canonical symmetric monoidal structure on the Yoneda embedding $C\to \Fun(C\op,\cat S)$
\end{prop}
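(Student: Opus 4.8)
The plan is to recall Glasman's construction and verify it has the stated universal/symmetric-monoidal properties, rather than redevelop the theory of Day convolution from scratch. First I would use the universal property of Day convolution recalled just above: a lax symmetric monoidal functor $\cat B \to \Fun(C\op,\cat S)$ is the same as a lax symmetric monoidal functor $\cat B \times C\op \to \cat S$. To produce the symmetric monoidal Yoneda embedding, I would exhibit the mapping-space functor $C \times C\op \to \cat S$, $(c,d)\mapsto \map_C(d,c)$, as a lax symmetric monoidal functor out of $C \times C\op$ (with its product symmetric monoidal structure, noting $C\op$ is symmetric monoidal since $C$ is), using the structure maps $\map_C(d,c)\times \map_C(d',c') \to \map_C(d\otimes d', c\otimes c')$ induced by the monoidal product on $C$, together with the unit $\ast \to \map_C(\mathbf 1, \mathbf 1)$. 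Unwinding the Day convolution universal property in the variable $\cat B = C$ then yields precisely a lax symmetric monoidal functor $C \to \Fun(C\op,\cat S)$ whose underlying functor is Yoneda.

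Next I would upgrade "lax" to "strong" symmetric monoidal. This is where the real content lies: one must check that the structure maps $yd \otimes^{Day} yc \to y(d\otimes c)$ are equivalences, i.e. that Yoneda sends $\otimes_C$ to Day convolution $\otimes^{Day}$. I would argue this pointwise: by the coend formula for Day convolution, $(yd \otimes^{Day} yc)(e) \simeq \int^{x,y} \map_C(e, x\otimes y)\times \map_C(x,d)\times \map_C(y,c)$, and the two coends collapse by the co-Yoneda lemma to $\map_C(e, d\otimes c) = y(d\otimes c)(e)$, naturally in $e$; tracking the equivalence shows it agrees with the lax structure map constructed above. Alternatively — and this is likely the cleanest route — one cites that Yoneda $C \to \Fun(C\op,\cat S)$ is the unit of the free-cocompletion/presentable-envelope adjunction, and that Day convolution is characterized as the unique symmetric monoidal structure making this a symmetric monoidal left adjoint when the target carries the "cocomplete" symmetric monoidal structure (colimit-preserving in each variable); the universal property of the symmetric monoidal envelope then forces the Yoneda embedding to be symmetric monoidal. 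I would follow whichever of these is developed in \cite[Section 3]{Glasman}.

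The main obstacle I anticipate is purely bookkeeping at the $\infty$-categorical level: constructing the lax symmetric monoidal structure on the two-variable mapping functor $C\times C\op \to \cat S$ coherently (not just on the level of homotopy categories) and identifying it correctly after transporting through the Day convolution universal property. In a $1$-categorical setting this is an elementary check, but coherence for all the higher simplices requires the machinery of \cite[2.2.6]{HA}; since the statement is explicitly attributed to Glasman, I would keep the proof here to a precise pointer plus the coend computation sketched above, which is the conceptual heart and is all that is needed downstream (namely, that localizing Day convolution and the Yoneda embedding interact well, which is the input to Theorem \ref{locDay}).
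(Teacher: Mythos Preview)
The paper does not prove this proposition; it is stated purely as a citation to Glasman with no accompanying argument. There is therefore no ``paper's own proof'' to compare against: the authors treat the symmetric monoidal Yoneda embedding as a black box and immediately use it.

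Your sketch is a reasonable reconstruction of the argument one finds in Glasman's paper. The route via the universal property of Day convolution (a lax symmetric monoidal functor into $\Fun(C\op,\cat S)$ corresponds to a lax symmetric monoidal functor out of $\cat B \times C\op$) applied to the mapping-space pairing, followed by the co-Yoneda collapse of the coend to upgrade lax to strong, is exactly the conceptual content. Your caveat about $\infty$-categorical bookkeeping is also apt: this is precisely what Glasman's Section~3 is devoted to, and is why the present paper simply cites the result rather than reproducing it. For the purposes of this paper your final paragraph is the right attitude: what is actually used downstream is only that the Yoneda embedding is strong symmetric monoidal for Day convolution, so a pointer plus the coend identification is all that is needed.
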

Using this and our assumption, we can conclude that our localization is compatible with the Day structure on $\Fun(\mathcal A,\cat S)$. 
\begin{prop}
The  localization $\Fun(\mathcal A,\cat S)\to \Fun^U(\mathcal A,\cat S)$ is compatible with the Day symmetric monoidal structure. Concretely, this means that if $f$ becomes an equivalence after localization, then $f\otimes^{Day}F$ does so too, for any $F\in \Fun(\mathcal A,\cat S)$.  
\end{prop}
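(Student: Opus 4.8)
The plan is to reduce the statement to a generating-set computation. By Theorem \ref{presentability}, the localization $\Fun(\mathcal A,\cat S)\to \Fun^U(\mathcal A,\cat S)$ is the localization at the set of maps
\[g_u\colon \on{colim}_{i\in K_u}\map_{\mathcal A}(f_u(i),-)\to \map_{\mathcal A}(f_u(\infty_u),-).\]
By \cite[Proposition 2.2.1.9]{HA} (or the analogous statement about compatibility of localizations with symmetric monoidal structures, \cite[Definition 2.2.1.6]{HA}), to show the localization is compatible with $\otimes^{Day}$ it suffices to show that for each generating map $g_u$ and each object $F$ in a set of generators of $\Fun(\mathcal A,\cat S)$ under colimits, the map $g_u\otimes^{Day}F$ is sent to an equivalence by the localization. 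Since $\Fun(\mathcal A,\cat S)$ is generated under colimits by the representables $\map_{\mathcal A}(a,-)$, and since $\otimes^{Day}$ preserves colimits in each variable, while the class of maps inverted by a localization is closed under colimits (in the arrow category), it is enough to treat $F=\map_{\mathcal A}(a,-)$ for $a\in\mathcal A$.

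Next I would use the symmetric monoidal Yoneda embedding of Proposition \ref{SymMonYoneda}: the Day convolution of two representables is again representable, namely $\map_{\mathcal A}(a,-)\otimes^{Day}\map_{\mathcal A}(b,-)\simeq \map_{\mathcal A}(a\otimes b,-)$, naturally in $a$ and $b$. Applying this with $b$ ranging over $f_u(i)$ and $f_u(\infty_u)$, and using that $\otimes^{Day}$ with the fixed representable $\map_{\mathcal A}(a,-)$ commutes with the colimit over $K_u$ defining the source of $g_u$, we identify $g_u\otimes^{Day}\map_{\mathcal A}(a,-)$ with the map
\[\on{colim}_{i\in K_u}\map_{\mathcal A}(a\otimes f_u(i),-)\to \map_{\mathcal A}(a\otimes f_u(\infty_u),-)\]
induced by the diagram $a\otimes f_u\colon K_u^\triangleleft\to\mathcal A$. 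By the Assumption preceding the theorem, $a\otimes f_u$ is (equivalent to) one of the diagrams in $U$, so this map is precisely (equivalent to) the generating map $g_{u'}$ for that index $u'$, and hence is inverted by the localization. This proves the claim.

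The main obstacle is bookkeeping rather than conceptual: one must be careful that the colimit $\on{colim}_{i\in K_u}$ appearing in the source of $g_u$ interacts correctly with Day convolution, i.e.\ that $(-)\otimes^{Day}\map_{\mathcal A}(a,-)$ really does commute with this particular colimit and that the resulting map agrees with the one induced functorially by $a\otimes f_u$ — this uses both the cocontinuity of Day convolution in each variable and the naturality in Proposition \ref{SymMonYoneda}. Once that identification is in place, and once one invokes the standard fact that compatibility of a localization with a monoidal structure can be checked on a colimit-generating family of objects (here the representables) — which is where \cite[Proposition 2.2.1.9]{HA} enters — the proof is essentially a direct translation of the Assumption on $U$ into a statement about the generating maps $g_u$.
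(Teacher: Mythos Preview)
Your proof is correct and follows essentially the same approach as the paper: both reduce $F$ to a representable using that Day convolution preserves colimits and that local equivalences are closed under colimits, then use the symmetric monoidality of the Yoneda embedding (Proposition~\ref{SymMonYoneda}) together with the Assumption on $U$ to identify $g_u\otimes^{Day}\map_{\mathcal A}(a,-)$ with one of the generating local equivalences. The only cosmetic difference is that the paper explicitly invokes accessibility of the localization to reduce from arbitrary local equivalences to the generating maps $g_u$, whereas you package this reduction into the statement that it suffices to check generators against generators.
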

\begin{proof}
Local equivalences are closed under colimits, and the Day convolution structure is compatible with colimits by \cite[2.13]{Glasman}, so we may assume $F=\map(P,-)$ for some $P\in \mathcal A$. 

For $f= \on{colim}_{i\in K_u} \map_{\mathcal{A}}({f_u(i)},-)\to \map_{\mathcal{A}}(f_u(\infty_u),-)$, since the Yoneda embedding is symmetric monoidal, $f\otimes^{Day}\map(P,-)$ is identified with 
\[\on{colim}_{i\in K_u} \map_{\mathcal{A}}({f_u(i)}\otimes P,-)\to \map_{\mathcal{A}}(f_u(\infty_u)\otimes P,-).\]
By assumption, the diagram $f_u\otimes P$ is (equivalent to a diagram) in $U$, so clearly this map is a local equivalence. Since our localization is accessible and the class of local equivalences is generated by maps $f$  of this form (see the proof of theorem \ref{presentability}), we are done.
\end{proof}

By \cite[2.2.1.9 and 4.1.7.4]{HA}, we automatically get Theorem \ref{locDay} for the special case $\cat C = \cat S$.  To extend to the general case, we use part (2) of Theorem \ref{presentability}. 

For this, we compare the two natural symmetric monoidal structures on $\Fun(\mathcal A,\cat C)$~: the Day convolution structure, and the one obtained from $\Fun(\mathcal A,\cat C)\simeq \Fun(\mathcal A,\cat S)\otimes \cat C$ (note that in $\cat{Pr}^L$, this tensor product is a coproduct of commutative algebras, so it gets a natural symmetric monoidal structure compatible with colimits in each variable).

\begin{lemm}\label{ExtDay}
The canonical functor $\Fun(\mathcal A,\cat S)\otimes\cat C\to \Fun(\mathcal A,\cat C)$ has a natural symmetric monoidal structure, where the right hand side has the Day convolution structure and the left hand side has the structure described above. It is therefore an equivalence of symmetric monoidal categories.
\end{lemm}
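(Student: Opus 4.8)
The underlying functor of the map in question is already known to be an equivalence: this is part (2) of Theorem~\ref{presentability} applied to the empty family $U=\varnothing$. So the only thing left to prove is that this functor can be upgraded to a symmetric monoidal functor between the two stated structures; since a symmetric monoidal functor whose underlying functor is an equivalence is automatically a symmetric monoidal equivalence, this also yields the last sentence of the statement. I would first recall that $\Fun(\mathcal A,\cat C)$ with the Day convolution is a presentable symmetric monoidal category, i.e.\ a commutative algebra object of $\cat{CAlg}(\cat{Pr}^L)$ (this is standard, see \cite{Glasman}; alternatively it follows from what is proved here once the lemma is established). The plan is then to recognize the source $\Fun(\mathcal A,\cat S)^{Day}\otimes\cat C$ as the coproduct of $\Fun(\mathcal A,\cat S)^{Day}$ and $\cat C$ in $\cat{CAlg}(\cat{Pr}^L)$ — which is exactly the meaning of ``the structure described above'' — and to produce a colimit-preserving symmetric monoidal functor out of it into $\Fun(\mathcal A,\cat C)^{Day}$ by prescribing its two legs, then to check that the resulting underlying functor is the canonical one.

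For the first leg I would take the colimit-preserving symmetric monoidal functor $\Fun(\mathcal A,\cat S)^{Day}\to\Fun(\mathcal A,\cat C)^{Day}$ given by post-composition with the unique colimit-preserving symmetric monoidal functor $\cat S\to\cat C$, $X\mapsto X\otimes\mathbf 1_{\cat C}$: that post-composition with such a functor induces a colimit-preserving symmetric monoidal functor on Day convolution categories is the standard functoriality of the Day product in the target variable (it is visible from the coend formula, since such a functor commutes with the coend and with the copowers over $\cat S$ appearing in it). For the second leg I would take $e\colon\cat C\to\Fun(\mathcal A,\cat C)^{Day}$, $c\mapsto\map_{\mathcal A}(\mathbf 1_{\mathcal A},-)\otimes c$, which is the left Kan extension along the (essentially unique, symmetric monoidal) unit inclusion $\{*\}\to\mathcal A$ under the identification $\Fun(\{*\},\cat C)^{Day}\simeq\cat C$; this is colimit-preserving, and it is symmetric monoidal — with $e(\mathbf 1_{\cat C})$ the Day unit — which one checks at the level of objects via the co-Yoneda computation $(\map_{\mathcal A}(\mathbf 1_{\mathcal A},-)\otimes c)\otimes^{Day}(\map_{\mathcal A}(\mathbf 1_{\mathcal A},-)\otimes c')\simeq\map_{\mathcal A}(\mathbf 1_{\mathcal A},-)\otimes(c\otimes c')$, coherence being supplied by the functoriality of Day convolution in the source variable.

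Prescribing these two legs, the universal property of the coproduct in $\cat{CAlg}(\cat{Pr}^L)$ yields a colimit-preserving symmetric monoidal functor $\Phi\colon\Fun(\mathcal A,\cat S)^{Day}\otimes\cat C\to\Fun(\mathcal A,\cat C)^{Day}$, and it remains to identify its underlying functor with the canonical $T$, which by the universal property of $\otimes$ in $\cat{Pr}^L$ corresponds to pointwise tensoring $(F,c)\mapsto(a\mapsto F(a)\otimes c)$. Both $\Phi$ and $T$ are colimit-preserving in each variable, and $\Fun(\mathcal A,\cat S)\otimes\cat C$ is generated under colimits by the objects $F\boxtimes c$ with $F$ a corepresentable $\map_{\mathcal A}(a,-)$ (corepresentables generate $\Fun(\mathcal A,\cat S)\simeq\mathcal P(\mathcal A\op)$ under colimits), so it suffices to compare the induced functors $\mathcal A\op\times\cat C\to\Fun(\mathcal A,\cat C)$. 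There $T$ sends $(a,c)$ to $\map_{\mathcal A}(a,-)\otimes c$, while $\Phi$ sends it to $(\map_{\mathcal A}(a,-)\otimes\mathbf 1_{\cat C})\otimes^{Day}(\map_{\mathcal A}(\mathbf 1_{\mathcal A},-)\otimes c)$, which two applications of the co-Yoneda lemma identify with $\map_{\mathcal A}(a\otimes\mathbf 1_{\mathcal A},-)\otimes(\mathbf 1_{\cat C}\otimes c)\simeq\map_{\mathcal A}(a,-)\otimes c$. So $\Phi$ and $T$ agree, and $T$ inherits the symmetric monoidal structure of $\Phi$.

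The object-level computations above are routine; the genuine difficulty is to perform them coherently — to produce the symmetric monoidal structure on $e$ and the symmetric monoidal identification $\Phi\simeq T$ at the $\infty$-categorical level rather than just objectwise. I would handle this by systematically reducing everything to (co)representables, using Glasman's theorem that the Yoneda embedding into the Day convolution category is symmetric monoidal (Proposition~\ref{SymMonYoneda}) together with the compatibility of Day convolution with colimits \cite[2.13]{Glasman}. The one structural input that deserves a careful reference is the functoriality of Day convolution with respect to symmetric monoidal functors in both the source and the target, which I would take from \cite{Glasman} (or the relevant parts of \cite[Section 2.2.6]{HA}).
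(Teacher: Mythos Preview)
Your proposal is correct and follows essentially the same strategy as the paper: recognize the source as a coproduct in $\cat{CAlg}(\cat{Pr}^L)$, construct the two legs as symmetric monoidal colimit-preserving functors, and verify that the induced map agrees with the canonical equivalence. The two legs you write down are exactly the ones the paper uses.

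The one noteworthy difference is in how the coherence you flag as ``the genuine difficulty'' is handled. You appeal to functoriality of Day convolution in source and target together with co-Yoneda computations; the paper instead uses the universal property of Day convolution \cite[2.2.6.8]{HA} directly: a lax symmetric monoidal functor into $\Fun(\mathcal A,\cat C)^{Day}$ is the same as a lax symmetric monoidal functor $(-)\times\mathcal A\to\cat C$. For the first leg this is the evaluation $\Fun(\mathcal A,\cat S)\times\mathcal A\to\cat S$ composed with $\cat S\to\cat C$; for the second it is $\cat C\times\mathcal A\xrightarrow{\id\times\map(\mathbf 1_{\mathcal A},-)}\cat C\times\cat S\to\cat C$. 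Both are manifestly lax symmetric monoidal as composites of such, and one then checks strictness by hand. This packages the coherence more cleanly than invoking separate functoriality statements, and you might prefer it as a way to discharge the issue you yourself identify.
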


\begin{proof}
In $\cat{Pr}^L$, symmetric monoidal categories with a compatible tensor product correspond to commutative algebra objects, and therefore tensor product is simply the coproduct. Therefore, to equip the canonical functor $\Fun(\mathcal A,\cat S)\otimes \cat C\to \Fun(\mathcal A,\cat C)$ with a symmetric monoidal structure is equivalent to doing so for both $\Fun(\mathcal A,\cat S)\to \Fun(\mathcal A,\cat C)$ and $\cat C\to \Fun(\mathcal A,\cat C)$. 

Note that there is an essentially unique colimit preserving symmetric monoidal functor $\cat{S\to C}$. Recall the universal property of the Day convolution structure (see \cite[2.2.6.8]{HA}), which implies then that $\Fun(\mathcal A,\cat S)\times \mathcal A\to \cat S$ has a canonical lax symmetric monoidal structure, so that the composite $\Fun(\mathcal A,\cat S)\times \mathcal A\to \cat S\to \cat C$ does so too. 

In particular, this gives $\Fun(\mathcal A,\cat S)\to \Fun(\mathcal A,\cat C)$ a lax symmetric monoidal structure. The explicit formula for the Day tensor product, as well as the fact that $\cat S\to \cat C$ is symmetric monoidal and preserves colimits imply that this lax symmetric monoidal structure is actually a (strict) symmetric monoidal structure. 

Moreover, let $\odot$ temporarily denote the tensor product $\cat{C\times S\to C}$. Then the composite $\cat C\times \mathcal A\xrightarrow{\id\times \map(1,-)\,}\cat{C\times S}\xrightarrow{\odot} \cat C$ is a composite of functors that all have canonical lax symmetric monoidal structures, so it has one as well. Therefore, the corresponding functor $\cat C\to \Fun(\mathcal A,\cat C)$ does too. One may then similarly check (using the fact that the Yoneda embedding $\mathcal A^{op}\to \Fun(\mathcal A,\cat S)$ is symmetric monoidal) that it is also not only lax but actually a (strict) symmetric monoidal functor. 

Therefore we also get a symmetric monoidal functor $\Fun(\mathcal A,\cat S)\otimes \cat C\to \Fun(\mathcal A,\cat C)$, and it is easy to check that on the underlying categories, it is the canonical functor which is an equivalence.
\end{proof}

\begin{proof}[Proof of Theorem \ref{locDay}]
The localization $\Fun(\mathcal A,\cat C)\to \Fun^U(\mathcal A,\cat C)$ is identified with
\[\Fun(\mathcal A,\cat S)\otimes\cat C\to \Fun^U(\mathcal A,\cat S)\otimes\cat C.\]
In this second form, it acquires a symmetric monoidal structure, which means that the localization is compatible with the Day convolution structure on $\Fun(\mathcal A,\cat C)$. We can therefore again apply \cite[2.2.1.9 and 4.1.7.4]{HA}  and get a symmetric monoidal structure on $\Fun^U(\mathcal A,\cat C)$ with the desired properties. 
\end{proof}

\begin{coro}
The following statements hold.
\begin{enumerate}
\item There is a unique symmetric monoidal structure on $\Fun^{rex}(\Mod_\ZZ^{c,\omega}, \Mod_\ZZ)$ such that the localization functor
\[\Fun(\Mod_\ZZ^{c,\omega}, \Mod_\ZZ)\to \Fun^{rex}(\Mod_\ZZ^{c,\omega}, \Mod_\ZZ)\]
is symmetric monoidal.
\item The functor $Q$ is the unit of $\Fun^{rex}(\Mod_\ZZ^{c,\omega}, \Mod_\ZZ)$. 
\item The same holds for $\Fun^\oplus(\Ab^{\omega},\Mod_\ZZ)$ or $\Fun^\oplus(\Ab_\kappa,\Mod_\ZZ)$, and the restriction of $Q$ there. 
\end{enumerate}
\end{coro}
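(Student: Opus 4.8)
The plan is to read off all three statements from Theorem~\ref{locDay}, using the identifications of $Q$ obtained in the previous sections. First I would instantiate Theorem~\ref{locDay} with $\mathcal A = \Mod_\ZZ^{c,\omega}$ (this is a small symmetric monoidal category, since connective compact $\ZZ$-modules are closed under $\otimes_\ZZ$ and contain $\ZZ$), $\cat C=\Mod_\ZZ$, and $U$ the set consisting of the one-point diagram hitting the zero object together with all cocartesian squares, as in items (1), (4), (5) of Example~\ref{examples}; then $\Fun^U(\mathcal A,\cat C)=\Fun^{exc}(\Mod_\ZZ^{c,\omega},\Mod_\ZZ)$, which coincides with $\Fun^{rex}(\Mod_\ZZ^{c,\omega},\Mod_\ZZ)$ because $\Mod_\ZZ$ is stable and $\Mod_\ZZ^{c,\omega}$ is pointed. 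The Assumption preceding Theorem~\ref{locDay} is satisfied: for any $a$, the functor $a\otimes_\ZZ-$ preserves the zero object and, being a left adjoint, sends cocartesian squares to cocartesian squares, so $a\otimes f_u$ again lies in $U$. Hence Theorem~\ref{locDay}(2) provides the unique symmetric monoidal structure claimed in (1). For (3) I would run the identical argument with $\mathcal A=\Ab^\omega$ (respectively $\Ab_\kappa$, with $\kappa$ uncountable so that $\Ab_\kappa$ is closed under $\otimes_\ZZ$ and contains $\ZZ$) and $U$ the set of diagrams $a\leftarrow a\oplus a'\to a'$ together with the empty cone hitting $0$, as in item (2) of Example~\ref{examples}; since $\Mod_\ZZ$ is additive, sending these to limit diagrams is the same as preserving finite direct sums, so $\Fun^U=\Fun^\oplus$, and $a\otimes_\ZZ-$ again preserves the zero object and finite direct sums, so the Assumption holds and Theorem~\ref{locDay}(2) applies.

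It remains to identify the units. In each case the localization functor of Theorem~\ref{locDay}(2) is symmetric monoidal, hence carries the unit of the Day convolution structure on $\Fun(\mathcal A,\Mod_\ZZ)$ to the unit of $\Fun^U(\mathcal A,\Mod_\ZZ)$, so it suffices to compute the Day convolution unit. By Glasman's symmetric monoidal Yoneda embedding (Proposition~\ref{SymMonYoneda}), the corepresentable functor $\map_{\mathcal A}(\mathbf 1_{\mathcal A},-)$ is the unit of $\Fun(\mathcal A,\cat S)$; pushing it through the symmetric monoidal equivalence of Lemma~\ref{ExtDay} — that is, postcomposing with the unique colimit-preserving symmetric monoidal functor $\cat S\to\Mod_\ZZ$, $X\mapsto\ZZ\otimes\Sigma^\infty_+X$ — identifies the unit of $\Fun(\mathcal A,\Mod_\ZZ)$ with the functor $a\mapsto\ZZ\otimes\Sigma^\infty_+\map_{\mathcal A}(\mathbf 1_{\mathcal A},a)$. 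For $\mathcal A=\Mod_\ZZ^{c,\omega}$ this is $a\mapsto\ZZ\otimes\Sigma^\infty_+\Omega^\infty a$, i.e. the functor $\ZZ[-]$ of Section~\ref{section : QCGD}; for $\mathcal A=\Ab^\omega$ or $\Ab_\kappa$ it is $a\mapsto\ZZ[\text{underlying set of }a]$, again the functor $\ZZ[-]$.

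Finally, the localization $\Fun(\Mod_\ZZ^{c,\omega},\Mod_\ZZ)\to\Fun^{rex}(\Mod_\ZZ^{c,\omega},\Mod_\ZZ)$ coming from Theorem~\ref{locDay}(2) must agree with the rexification, as both are left adjoint to the same inclusion; and the rexification is the composite $D_1\circ P$ of the reduction and the first Goodwillie derivative, exactly as in the proof of Proposition~\ref{QisAdd}. Therefore the unit of $\Fun^{rex}(\Mod_\ZZ^{c,\omega},\Mod_\ZZ)$ is $D_1(P(\ZZ[-]))\simeq D_1(\tilde\ZZ[-])\simeq Q$ by Theorem~\ref{QisD1}, which is (2); in the same way the unit of $\Fun^\oplus(\Ab^\omega,\Mod_\ZZ)$ (respectively $\Fun^\oplus(\Ab_\kappa,\Mod_\ZZ)$) is $\on{Add}(\ZZ[-])\simeq Q$ by Corollary~\ref{QisAdd2}, which finishes (3). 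I expect the only step needing genuine care to be the identification of the Day convolution unit, together with the verification that the abstract localization functor produced by Theorem~\ref{locDay} is the concrete rexification (respectively additivization) appearing in Theorem~\ref{QisD1} (respectively Corollary~\ref{QisAdd2}); everything else is bookkeeping with results already in hand.
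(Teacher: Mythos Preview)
Your proposal is correct and follows essentially the same approach as the paper: apply Theorem~\ref{locDay} with the appropriate $U$ to get the symmetric monoidal structures, identify the Day convolution unit as $\ZZ[-]$ via Proposition~\ref{SymMonYoneda} and Lemma~\ref{ExtDay}, and then recognize its image under the localization as $Q$ using Theorem~\ref{QisD1} (for the rex case) and Corollary~\ref{QisAdd2} (for the additive cases). You are just more explicit than the paper in verifying the Assumption, in identifying the abstract localization with $D_1\circ P$ (resp.\ $\on{Add}$), and in noting the uncountability hypothesis on $\kappa$ needed for $\Ab_\kappa$ to be symmetric monoidal.
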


\begin{proof}
Statement (1) and the first half of statement (3) follow from Theorem \ref{locDay} using an appropriate choice of $U$ (using \ref{examples}).

By Lemma \ref{ExtDay} and Proposition \ref{SymMonYoneda}, the functor $\ZZ[-]$ is the monoidal unit of the category $\Fun(\Mod_\ZZ^{c,\omega}, \Mod_\ZZ)$. The functor $Q$ is the image of $\ZZ[-]$ under the localization functor, therefore it is the unit of the symmetric monoidal category $\Fun^{rex}(\Mod_\ZZ^{c,\omega}, \Mod_\ZZ)$.

The claim about $\Ab^{\omega},\Ab_\kappa$ follows in a similar way using Proposition \ref{QisAdd2}.
\end{proof}

\begin{lemm}\label{UniqueAlg}
Let $\cat{C}^{\otimes}$ be a symmetric monoidal category. Let $\mathbb{I}$ be the unit of $\cat{C}^{\otimes}$. Then $\mathbb{I}$ has a unique commutative algebra structure.
\end{lemm}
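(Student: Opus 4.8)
The plan is to exhibit the unit $\mathbb{I}$ as a commutative algebra in a canonical way and then show any commutative algebra structure agrees with it. Recall that commutative algebra objects in $\cat{C}^\otimes$ are by definition sections of the cocartesian fibration $\cat{C}^\otimes \to \ucat{Fin}_*$ that send inert morphisms to cocartesian edges, i.e. $E_\infty$-algebras. The first step is to observe that the unit functor $\cat S \to \cat C$ --- the essentially unique symmetric monoidal functor from spaces, obtained since $\cat S$ is the initial presentable symmetric monoidal category (or, without presentability assumptions, using that the unit map comes from the symmetric monoidal functor out of the trivial symmetric monoidal category, which sends $\ast$ to $\mathbb I$) --- is symmetric monoidal and hence sends the (unique) commutative algebra structure on the point $\ast \in \cat S$ to a commutative algebra structure on $\mathbb I$. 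This gives existence.

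For uniqueness, the cleanest route is to use the universal property of the unit. A commutative algebra structure on an object $X$ of $\cat C$ is the same datum as a symmetric monoidal functor from a suitable ``free'' symmetric monoidal category; but more directly, I would invoke that the space of commutative algebra structures on $\mathbb I$ is computed by $\map_{\Alg(\cat C)}(\mathbf 1, -)$-type considerations: the forgetful functor $\Alg(\cat C) \to \cat C$ (evaluation at $\langle 1\rangle$) reflects the fact that $\mathbb I$ is initial in $\Alg(\cat C)$. Indeed, the key point is that the unit $\mathbb I$, equipped with its canonical algebra structure, is the \emph{initial} object of $\Alg(\cat C)$: this is \cite[Corollary 3.2.1.9]{HA} (the unit is initial among $E_\infty$-algebras). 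Since the forgetful functor $\Alg(\cat C)\to \cat C$ is conservative on the relevant fibers — more precisely, since for any commutative algebra $(A, \mu, \ldots)$ there is a unique algebra map $\mathbb I \to A$, and when $A$ has underlying object $\mathbb I$ this map is an equivalence of underlying objects — any commutative algebra structure on $\mathbb I$ receives a canonical map from the canonical one which is an equivalence, forcing the space of such structures to be connected. Combined with the fact that this space is also a space (an $\infty$-groupoid), connectedness plus the fact that the automorphisms are controlled likewise gives contractibility.

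Concretely, the steps in order: (i) recall the definition of $\Alg(\cat C) = \Alg_{E_\infty}(\cat C)$ and the forgetful functor to $\cat C$; (ii) cite \cite[Corollary 3.2.1.9]{HA} that the initial object of $\Alg(\cat C)$ has underlying object $\mathbb I$; (iii) deduce that the fiber of $\Alg(\cat C) \to \cat C$ over $\mathbb I$ — which is precisely the space of commutative algebra structures on $\mathbb I$ — is nonempty and, using initiality, that it is contractible: given two points, the unique algebra maps between them in $\Alg(\cat C)$ are equivalences since they induce equivalences (the identity, after identifying underlying objects) on underlying objects and the forgetful functor is conservative. One must take a little care to argue that the path space between any two points is also contractible, but this again follows from initiality applied after passing to appropriate over/under-categories, or simply from the standard fact that a fiber of a conservative functor over an object admitting a map to/from every object of the fiber is a contractible Kan complex.

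The main obstacle — or rather the only subtlety — is packaging the ``space of algebra structures'' argument rigorously: one wants that the \emph{whole} mapping space, not just $\pi_0$, is trivial, and for this the cleanest input is that $\mathbb I$ with its canonical structure is initial in $\Alg(\cat C)$, so that $\map_{\Alg(\cat C)}(\mathbb I, B)$ is contractible for every $B$; taking $B$ to range over algebra structures on $\mathbb I$ and combining with conservativity of the forgetful functor does the job. There is no hard computation; the work is entirely in correctly invoking the universal property of the monoidal unit among $E_\infty$-algebras.
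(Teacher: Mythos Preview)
Your overall strategy---invoke that the unit with its canonical structure is initial in $\Alg_{E_\infty}(\cat C)$ (\cite[Corollary 3.2.1.9]{HA}) and then use conservativity of the forgetful functor---is sound in outline, and it is genuinely different from the paper's hands-on argument. But there is a real gap at the crucial step.

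Given a commutative algebra $A$ whose underlying object is $\mathbb I$, the unique algebra map $\mathbb I_{\mathrm{init}}\to A$ has underlying map the \emph{unit} $\eta_A:\mathbb I\to\mathbb I$ of $A$. You assert that this is ``the identity, after identifying underlying objects'', but this is precisely what must be proved: there is no a priori reason why an arbitrary algebra structure on $\mathbb I$ has unit map equal (or equivalent) to $\id_{\mathbb I}$. Conservativity of the forgetful functor only tells you that the algebra map is an equivalence \emph{if} $\eta_A$ is; it does not supply the latter. So your argument, as written, is circular at this point.

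The paper's proof fills exactly this gap: passing to the homotopy category, it uses the unit axiom $\mu\circ(\id\otimes\eta)\circ(\text{unit iso})=\id$ to produce a retraction $\alpha$ of $\eta$, and then the Eckmann--Hilton observation that $\mathrm{End}(\mathbb I)$ is a commutative monoid under composition to conclude that a one-sided inverse is a two-sided inverse, hence $\eta$ is an equivalence. If you want to rescue your approach, you need this (or an equivalent) argument as a lemma; once you know $\eta_A$ is invertible, initiality plus conservativity indeed finishes the job cleanly, and arguably more conceptually than the paper's bare statement.
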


\begin{proof}
The more precise statement would be as follows. Let $\cat C^\otimes$ be a symmetric monoidal category with unit $\mathbb I$, and suppose $\mathbb I$ has an algebra structure with structure map $\eta: \mathbb I\to\mathbb I$. Then $\eta$ is an equivalence of commutative algebras.

To see that, note that by composing with $\cat C\to \mathrm{Ho}(\cat C)$, since this functor preserves algebra objects and reflects equivalences, we may assume $\cat C$ is a $1$-category, which we will accordingly denote by $C$. 

Then $\mathrm{End}_C(\mathbb I)$ is a commutative monoid under composition, by the Eckmann-Hilton argument, so it suffices to prove that $\eta$ has a retraction. We then have the following commutative diagram, where $\mu : \mathbb I\otimes \mathbb I\to \mathbb I$ is the multiplication of the algebra structure under consideration, the isomorphisms are the ones given by the symmetric monoidal structure on $C$, and $\alpha$ is defined by the commutative triangle :
\begin{center}

\begin{tikzcd}
\mathbb I\ar[r,"\sim"] \ar[dr,"\id"'] & \mathbb I\otimes \mathbb I \ar[r,"\id\otimes \eta"]\ar[d,"\sim"] & \mathbb I\otimes \mathbb I \ar[r,"\mu"] & \mathbb I \\
& \mathbb I  \ar[r,"\eta"] & \mathbb I \ar[u,"\sim"]\ar[ru,"\alpha"'] 
\end{tikzcd}
    
\end{center}
In particular, $\alpha\eta = \id_\mathbb I$, so we have a retraction, and we may thus conclude. 
\end{proof}

\begin{coro}\label{UniqueQ}
There is a unique commutative algebra structure on $Q$ in the category $\Fun(\Mod_\ZZ^{c,\omega},\Mod_\ZZ)$. The same holds for the restriction of $Q$ to $\Ab^{\omega}$ or $\Ab_\kappa$.
\end{coro}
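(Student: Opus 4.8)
The plan is to deduce this directly from Lemma~\ref{UniqueAlg} and the corollary just above, which exhibits $Q$ as the unit of the symmetric monoidal category $\Fun^{rex}(\Mod_\ZZ^{c,\omega},\Mod_\ZZ)$. The only subtlety is that $Q$ is \emph{not} the unit of $\Fun(\Mod_\ZZ^{c,\omega},\Mod_\ZZ)$ for the Day convolution structure — that unit is $\ZZ[-]$ — so one must transport the uniqueness statement across the localization of Theorem~\ref{locDay}.

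First I would recall from Theorem~\ref{locDay} (and its proof) that the localization
\[L\colon \Fun(\Mod_\ZZ^{c,\omega},\Mod_\ZZ)\longrightarrow \Fun^{rex}(\Mod_\ZZ^{c,\omega},\Mod_\ZZ)\]
is a Bousfield localization compatible with the Day convolution symmetric monoidal structure in the sense of \cite[2.2.1.9]{HA}; let $\iota$ be its fully faithful, lax symmetric monoidal right adjoint. The key step is then to pass to commutative algebras: such a compatible localization induces a Bousfield localization $\on{CAlg}(\Fun(\Mod_\ZZ^{c,\omega},\Mod_\ZZ))\to \on{CAlg}(\Fun^{rex}(\Mod_\ZZ^{c,\omega},\Mod_\ZZ))$, computed by applying $L$ to underlying functors, whose local objects are precisely the commutative algebras with right exact underlying functor. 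Since $Q$ is right exact, it follows that the space of commutative algebra structures on $Q$ is the same whether formed in $\Fun(\Mod_\ZZ^{c,\omega},\Mod_\ZZ)$ or in $\Fun^{rex}(\Mod_\ZZ^{c,\omega},\Mod_\ZZ)$ — one passes back and forth using $L$, $\iota$ and the equivalence $L\iota\simeq\id$. Because $Q$ is the unit of $\Fun^{rex}(\Mod_\ZZ^{c,\omega},\Mod_\ZZ)$ by the corollary above, Lemma~\ref{UniqueAlg} now gives that it carries a unique commutative algebra structure there, hence also in $\Fun(\Mod_\ZZ^{c,\omega},\Mod_\ZZ)$.

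Finally, the cases of $\Ab^{\omega}$ and $\Ab_\kappa$ are handled verbatim, using the versions of Theorem~\ref{locDay} and of the preceding corollary for $\Fun^\oplus(\Ab^{\omega},\Mod_\ZZ)$ and $\Fun^\oplus(\Ab_\kappa,\Mod_\ZZ)$ (part (3) of that corollary), in which $Q$ is again the respective unit. The one step that is not an immediate citation is the compatibility of $\on{CAlg}(-)$ with the symmetric monoidal Bousfield localization; I expect that to be the main (though essentially standard) point to pin down carefully, while everything else is formal or already established above.
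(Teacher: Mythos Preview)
Your proposal is correct and is exactly the argument the paper intends: the corollary is stated without proof immediately after Lemma~\ref{UniqueAlg} and the corollary identifying $Q$ with the unit of $\Fun^{rex}$ (resp.\ $\Fun^\oplus$), and the only point to make explicit is the passage from uniqueness in $\Fun^{rex}$ to uniqueness in $\Fun$, which you handle via the compatible localization of Theorem~\ref{locDay} (the CAlg-compatibility you flag is indeed the standard consequence of \cite[2.2.1.9]{HA}).
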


\subsection{MacLane homology as a lax symmetric monoidal functor}

We are now equipped to give a construction of $\HML$ as a lax symmetric monoidal functor
\[\HML:\cat{Alg}_{\mathcal{AB}}(\Ab)\to \Mod_{\ZZ}.\]

\begin{cons}\label{monoidalHML}
Note that $Q$ is initial in $\Fun^{lax}(\Ab^\omega,\Mod_\ZZ)$  (where $\Fun^{lax}(\cat C,\cat D)$ denotes the category of lax symmetric monoidal functors $\cat C\to \cat D$) so it has a unique symmetric monoidal transformation $Q\to i$, where $i$ is the inclusion, and therefore, by \cite[4.8.1.10]{HA}, there is also a unique symmetric monoidal transformation $Q\to i$ as functors $\Ab\to \Mod_\ZZ$. The same can be said monoidally (that is, dropping symmetry), and so this transformation must agree with the monoidal augmentation that we defined in Section \ref{section : QC}. 

This symmetric monoidal augmentation therefore yields a symmetric monoidal augmention $Q\to i$ as functors $\Alg_{Ass}(\Ab)\to \Alg_{Ass}(\Mod_\ZZ)$, which is the one we used to define MacLane homology~: from it, we said that we could define the structure of a $Q(A)$-bimodule on any $A$-bimodule $M$. 

To define this more conceptually, one may say that the morphism $(Q(A),M)\to (A,M)$ in $\Alg_{\mathcal{AB}}(\Mod_\ZZ)$ is cartesian over $Q(A)\to A$. We may thus use Appendix \ref{appendix : monoidal fibration} to define it. By Theorem \ref{cartesianfib} (whose hypotheses are readily checked here),
\[\Fun^{lax}(\Alg_{\mathcal{AB}}(\Ab),\Alg_{\mathcal{AB}}(\Mod_\ZZ))\to \Fun^{lax}(\Alg_{\mathcal{AB}}(\Ab), \Alg_{Ass}(\Mod_\ZZ))\]
is a cartesian fibration; and we may thus lift $Q\to i$ to get our pointwise cartesian morphism $(Q(A),M)\to (A,M)$ functorially, and symmetric monoidally. In particular, we get a well-defined functor $(A,M)\mapsto (Q(A),M)$ from the category $\Alg_{\mathcal{AB}}(\Ab)$ to the category $\Alg_{\mathcal{AB}}(\Mod_\ZZ)$ which is lax symmetric monoidal. Since all other functors in the definition of $\HML$ are symmetric monoidal, this gives $\HML$ a lax symmetric monoidal structure. 
\end{cons}

\section{A symmetric monoidal equivalence}\label{section : sym mon equivalence}

The goal of this section is to prove the following theorem.

\begin{theo}\label{maintheoQ}
There is a symmetric monoidal equivalence $Q(-)\simeq \ZZ\otimes -$ in the category of functors $\Mod_\ZZ^c\to \Mod_\ZZ$
\end{theo}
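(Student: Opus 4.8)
The plan is to identify both functors in question with the monoidal unit of a suitable symmetric monoidal $\infty$-category, and then invoke the uniqueness of the unit (Lemma \ref{UniqueAlg} and its consequences) to conclude that they agree not just as objects but as commutative algebras, hence a fortiori as symmetric monoidal functors after one checks the relevant structure transports correctly. Concretely, I would first reduce from $\Fun(\Mod_\ZZ^c,\Mod_\ZZ)$ to the smaller category $\Fun^{rex}(\Mod_\ZZ^{c,\omega},\Mod_\ZZ)$: by Corollary \ref{univnonabder} restriction along $\Mod_\ZZ^{c,\omega}\hookrightarrow \Mod_\ZZ^c$ (or the $F_\ZZ$ version) is an equivalence onto colimit-preserving functors, both $Q$ and $\ZZ\otimes-$ preserve colimits (the latter obviously, the former by the summary at the end of Section \ref{section : QCGD}), and this equivalence is symmetric monoidal for the Day/localized structures by the results of Section \ref{sec : sym mon structure}. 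So it suffices to produce a symmetric monoidal equivalence of the restrictions in $\Fun^{rex}(\Mod_\ZZ^{c,\omega},\Mod_\ZZ)$.

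Next I would recall from the corollary following Theorem \ref{locDay} that $Q$ is the monoidal unit of $\Fun^{rex}(\Mod_\ZZ^{c,\omega},\Mod_\ZZ)$ with its localized Day structure. The heart of the argument is therefore to show that $\ZZ\otimes-$ is \emph{also} equivalent to this unit as a functor — equivalently, to pin down the unit of $\Fun^{rex}(\Mod_\ZZ^{c,\omega},\Mod_\ZZ)$ intrinsically. The unit of the unlocalized Day convolution on $\Fun(\Mod_\ZZ^{c,\omega},\Mod_\ZZ)$ is $\ZZ[-] = \ZZ\otimes\Sigma^\infty_+(-)$ by Proposition \ref{SymMonYoneda} and Lemma \ref{ExtDay}; the unit of the localization $\Fun^{rex}$ is its image under the symmetric monoidal localization $L$, i.e. the "rexification" of $\ZZ[-]$, which by the discussion in Section \ref{section : QCGD} is precisely $D_1(\tilde\ZZ[-])\simeq Q$. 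On the other hand, the reduced functor $\tilde\ZZ[-]\simeq \ZZ\otimes\Sigma^\infty(-)$ is already right exact? — no, it is not, so one genuinely needs the $D_1$; but the point is that $\ZZ\otimes(-)$, viewed as $A\mapsto \ZZ\otimes A$ (smash with the underlying spectrum, $A$ being already a connective $\ZZ$-module), \emph{is} right exact, in fact exact, and agrees with $D_1(\tilde\ZZ[-])$ because on $F_\ZZ$ the additivization of $\ZZ[-]$ sends a free group of rank $n$ to $\ZZ^{\oplus n} = \ZZ\otimes \ZZ^{\oplus n}$ — so both $Q$ and $\ZZ\otimes-$ restrict on $F_\ZZ$ to the same additive functor, and by Corollary \ref{univnonabder} (the $\Fun^L \leftrightarrow \Fun^\amalg$ half) they are equivalent as plain functors.

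The remaining, and I expect main, obstacle is to upgrade this equivalence of underlying functors to a \emph{symmetric monoidal} equivalence. The clean way is: $Q$ is the unit of a symmetric monoidal category, and Lemma \ref{UniqueAlg}/Corollary \ref{UniqueQ} say the unit carries a unique commutative algebra structure; so if I can exhibit $\ZZ\otimes-$ as a commutative algebra in $\Fun^{rex}(\Mod_\ZZ^{c,\omega},\Mod_\ZZ)$ whose underlying object is equivalent to $Q$ (equivalently to $\ZZ[-]$ after including back into $\Fun$), then it must be \emph{the} unit, with its canonical algebra structure, and hence symmetric-monoidally equivalent to $Q$. The commutative algebra structure on $\ZZ\otimes-$ comes from the lax symmetric monoidal structure of $\ZZ\otimes(-):\Mod_\ZZ\to\Mod_\ZZ$ (smashing with a fixed $E_\infty$-ring $\ZZ$ is even symmetric monoidal, being base change along $\SS\to\ZZ$ composed with the forgetful functor, or more simply $-\otimes\ZZ$ on $\Mod_\ZZ$ is symmetric monoidal with unit $\ZZ$; restricted appropriately to connective/compact objects it gives the needed algebra in the functor category via the Day convolution universal property of \cite[2.2.6.8]{HA}). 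One then checks this algebra is idempotent / that its structure map is the canonical one — precisely the content of Lemma \ref{UniqueAlg} applied to the ambient symmetric monoidal category — and transports the resulting equivalence back along the (symmetric monoidal) identifications of the first paragraph to land in $\Fun(\Mod_\ZZ^c,\Mod_\ZZ)$. The delicate bookkeeping is making sure each of these identifications ($\Fun^{rex}(\Mod_\ZZ^{c,\omega},-)\simeq \Fun^L(\Mod_\ZZ^c,-)$, the Day-to-localized-Day compatibility, and the universal property producing the algebra structure on $\ZZ\otimes-$) is genuinely symmetric monoidal and that the two algebra structures in play are compared — everything else is formal once $Q$ is recognized as the unit.
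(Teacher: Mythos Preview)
Your overall strategy is correct and in fact quite close to the paper's: recognise $Q$ as the unit of $\Fun^{rex}(\Mod_\ZZ^{c,\omega},\Mod_\ZZ)$, exhibit a commutative algebra structure on $\ZZ\otimes-$, show the two agree as \emph{underlying} objects, and invoke Lemma \ref{UniqueAlg} to force the algebra structures to match. The paper does exactly this, except that it packages the construction of the algebra structure on $\ZZ\otimes-$ and the identification of underlying objects into an auxiliary result (Theorem \ref{moneq}, a full symmetric monoidal equivalence $\Mod_{\ZZ\otimes\ZZ}\simeq\Fun^{rex}(\Mod_\ZZ^{c,\omega},\Mod_\ZZ)$ sending $M$ to $M\otimes_\ZZ-$), and then reads off the claim by looking at what happens to the unit $\ZZ\otimes\ZZ$.

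The genuine gap is in your verification that $Q$ and $\ZZ\otimes-$ agree as \emph{plain} functors. You write that on $F_\ZZ$ ``the additivization of $\ZZ[-]$ sends a free group of rank $n$ to $\ZZ^{\oplus n}=\ZZ\otimes\ZZ^{\oplus n}$''. Both halves of that equation are false: the symbol $\otimes$ here is the smash product over $\SS$, not $\otimes_\ZZ$, so $\ZZ\otimes\ZZ^{\oplus n}\simeq(\ZZ\otimes\ZZ)^{\oplus n}$, and $\ZZ\otimes\ZZ$ is a highly nontrivial spectrum (its homotopy is $\mathrm{Tor}^{\SS}_*(\ZZ,\ZZ)$, the stable homology of Eilenberg--MacLane spaces). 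Likewise $\mathrm{Add}(\ZZ[-])(\ZZ)\simeq Q(\ZZ)$ is not $\ZZ$. So your ``both restrict to the same additive functor on $F_\ZZ$'' is exactly the nontrivial content of the theorem, not a triviality, and you have not established it. The paper supplies this missing step by a Yoneda computation inside the proof of Theorem \ref{moneq}: one checks that $\map(Q,M\otimes_\ZZ-)\simeq\Omega^\infty M\simeq\map_{\Mod_{\ZZ\otimes\ZZ}}(\ZZ\otimes\ZZ,M)$ naturally in $M$, whence $Q\simeq(\ZZ\otimes\ZZ)\otimes_\ZZ-=\ZZ\otimes-$ as objects. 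An alternative would be to argue directly that $D_1(\tilde\ZZ[-])\simeq\ZZ\otimes-$ by writing $\tilde\ZZ[-]\simeq\ZZ\otimes\Sigma^\infty\Omega^\infty(-)$ and using that $D_1(\Sigma^\infty\Omega^\infty)\simeq\id$ on connective spectra (a Freudenthal-type estimate); either way, some actual work is required precisely here, and your sketch skips it with an incorrect computation.
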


The main tool that we shall use in order to prove this result is the following theorem.

\begin{theo}\label{moneq}
There is a symmetric monoidal equivalence of categories
\[\cat{Mod}_{\ZZ\otimes\ZZ}\to \Fun^{rex}(\Mod_\ZZ^{c,\omega},\Mod_\ZZ)\]
sending $M$ to the functor $M\otimes_{\ZZ}-$.
\end{theo}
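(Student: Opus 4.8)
The plan is to exhibit both sides as categories of modules over a commutative algebra object in a suitable presentable symmetric monoidal category, and then identify the relevant algebras. First I would observe that by the previous corollary, $\Fun^{rex}(\Mod_\ZZ^{c,\omega},\Mod_\ZZ)$ is a presentable symmetric monoidal category whose unit is $Q$, and that it is stable and presentably symmetric monoidal (the tensor product preserves colimits in each variable, inherited from the Day structure). By Lemma \ref{nonabder} and its corollaries we know $\Mod_\ZZ^c \simeq \Fun^\amalg(F_\ZZ,\cat S)$, and since $\Mod_\ZZ^{c,\omega}$ is a full stable idempotent-complete subcategory, the universal property of Corollary \ref{univnonabder} together with the explicit description of excisive functors means that $\Fun^{rex}(\Mod_\ZZ^{c,\omega},\Mod_\ZZ)$ is the stabilization of, or a colocalization of functors out of, the free presentable category on $\Mod_\ZZ^{c,\omega}$. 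In any case, the key point to extract is that $\Fun^{rex}(\Mod_\ZZ^{c,\omega},\Mod_\ZZ)$, as a presentable symmetric monoidal $\ZZ$-linear category, is generated under colimits by its unit $Q$ (evaluation at $\ZZ$ and the restricted Yoneda/coYoneda make this precise), so that it is equivalent to $\Mod_{\mathrm{End}(Q)}$ for $\mathrm{End}(Q)$ the endomorphism $E_\infty$-ring of the unit.

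Next I would compute that endomorphism ring. The unit $Q$ of $\Fun^{rex}(\Mod_\ZZ^{c,\omega},\Mod_\ZZ)$ is the image of the unit $\ZZ[-] = \ZZ\otimes\Sigma^\infty_+(-)$ — or rather $\tilde\ZZ[-] = \ZZ\otimes\Sigma^\infty(-)$ on the pointed/reduced side — under the symmetric monoidal localization $\Fun(\Mod_\ZZ^{c,\omega},\Mod_\ZZ)\to\Fun^{rex}(\Mod_\ZZ^{c,\omega},\Mod_\ZZ)$. Mapping spaces in the localization are computed as mapping spaces between fibrant (i.e.\ $U$-local, here right-exact/excisive) replacements in the Day-convolution category $\Fun(\Mod_\ZZ^{c,\omega},\Mod_\ZZ)$, and since $Q = D_1(\tilde\ZZ[-])$ is already right-exact, $\mathrm{End}_{\Fun^{rex}}(Q) \simeq \mathrm{End}_{\Fun}(\ZZ[-], Q) \simeq \mathrm{End}_{\Fun}(\ZZ[-], D_1\tilde\ZZ[-])$. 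Using the explicit formula $D_1 F(X) = \on{colim}_n \Omega^n F(\Sigma^n X)$ together with $\tilde\ZZ[\ZZ] \simeq \ZZ\otimes\Sigma^\infty(\ZZ,0)$ and $\Sigma^\infty(\ZZ,0)\simeq \SS$ (the sphere splits off the basepoint component), one evaluates at the generator $\ZZ\in\Mod_\ZZ^{c,\omega}$ and gets $D_1\tilde\ZZ[-](\ZZ) \simeq \ZZ\otimes\ZZ$; the co-Yoneda/free–forgetful computation of maps out of the corepresented functor $\ZZ[-]$ then yields $\mathrm{End}(Q)\simeq \ZZ\otimes\ZZ$ as an $E_\infty$-ring. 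Hence $\Fun^{rex}(\Mod_\ZZ^{c,\omega},\Mod_\ZZ)\simeq \Mod_{\ZZ\otimes\ZZ}$ as presentable categories, and one checks the equivalence is symmetric monoidal because it sends unit to unit and preserves colimits (Lurie's $\Mod$-theory, \cite[4.8.5]{HA}, gives that a colimit-preserving symmetric monoidal functor is determined on the unit); under it the free module $(\ZZ\otimes\ZZ)\otimes M = \ZZ\otimes_\ZZ M$ (for $M$ a $\ZZ$-module, viewed as a $\ZZ\otimes\ZZ$-module via one of the two unit maps $\ZZ\to\ZZ\otimes\ZZ$) corresponds to the functor $M\otimes_\ZZ -$, which is the desired description.

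The main obstacle I expect is the precise identification that $\Fun^{rex}(\Mod_\ZZ^{c,\omega},\Mod_\ZZ)$ is \emph{generated under colimits by its monoidal unit} $Q$ — equivalently that the canonical colimit-preserving symmetric monoidal functor from $\Mod_{\mathrm{End}(Q)}$ is an equivalence. For this I would argue that $\Mod_\ZZ^{c,\omega}$, as a pointed finitely-cocomplete $\infty$-category, is generated under finite colimits by the single object $\ZZ$, so that a right-exact functor out of it with values in the stable category $\Mod_\ZZ$ is determined by its value at $\ZZ$ together with the resulting module structure; more structurally, the category of right-exact/excisive functors $\Mod_\ZZ^{c,\omega}\to\Mod_\ZZ$ is the category of ``$\ZZ$-linear excisive functors,'' which by Goodwillie/stable-homotopy formalism (the identification of excisive functors with modules over the derivative of the identity, or directly with $\Mod$ over the endomorphisms of the identity functor $\ZZ[-]$-style corepresentable) is a module category over the endomorphism ring of the unit. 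Carefully matching up the symmetric monoidal structures — the Day convolution descended to $\Fun^{rex}$ versus the relative tensor product over $\ZZ\otimes\ZZ$ — and verifying the two unit maps $\ZZ\rightrightarrows\ZZ\otimes\ZZ$ correspond to the two "variable" slots is the part that requires genuine care rather than formal nonsense; everything else is an assembly of Theorem \ref{locDay}, Corollary \ref{univnonabder}, Theorem \ref{QisD1} and standard module-category theory.
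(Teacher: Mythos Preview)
Your strategy---identify $\Fun^{rex}(\Mod_\ZZ^{c,\omega},\Mod_\ZZ)$ as a module category by showing $Q$ is a compact generator and computing its endomorphism ring---is a legitimate alternative to the paper's route, and the core Yoneda computation is shared: the paper computes $\map(Q,F(M))\simeq \Omega^\infty M$ to deduce $Q\simeq F(\ZZ\otimes\ZZ)$, which amounts to your $\mathrm{End}(Q)\simeq\ZZ\otimes\ZZ$ at the spectrum level. The paper, however, runs in the opposite direction: it first builds the explicit functor $F\colon M\mapsto M\otimes_\ZZ -$, proves it is an equivalence of underlying categories via $\Fun^{rex}(\Mod_\ZZ^{c,\omega},\Mod_\ZZ)\simeq\Fun^L(\Mod_\ZZ,\Mod_\ZZ)$ together with \cite[7.1.2.4]{HA}, then constructs a lax symmetric monoidal structure on $F$ by hand, and finally uses Lemma~\ref{UniqueAlg} (uniqueness of commutative algebra structures on the unit) to upgrade lax to strict. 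Your route is more conceptual; the paper's keeps the functor and its monoidal structure explicit from the start, so one never has to pin down $\mathrm{End}(Q)$ as an $E_\infty$-ring.

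There is one error and one gap. The error: $\Sigma^\infty(\ZZ,0)\not\simeq\SS$; the pointed discrete set $\ZZ$ has suspension spectrum $\bigvee_{n\neq 0}\SS$. The correct computation of $D_1\tilde\ZZ[-](\ZZ)$ is $\on{colim}_n \Omega^n\bigl(\ZZ\otimes\Sigma^\infty K(\ZZ,n)\bigr)\simeq \ZZ\otimes\ZZ$, recognizing the colimit as building the Eilenberg--MacLane spectrum, so your conclusion survives but the reasoning needs repair. The gap: a symmetric monoidal Schwede--Shipley argument yields $\Fun^{rex}\simeq\Mod_{\mathrm{End}(Q)}$ only for $\mathrm{End}(Q)$ \emph{as an $E_\infty$-ring}, whereas you have identified it with $\ZZ\otimes\ZZ$ only as a spectrum. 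To close this you must exhibit an $E_\infty$-map $\ZZ\otimes\ZZ\to\mathrm{End}(Q)$---equivalently two $E_\infty$-maps $\ZZ\to\mathrm{End}(Q)$---and check it induces your spectrum-level equivalence. One such map comes from the $\Mod_\ZZ$-linearity of the target; producing the second and verifying compatibility is precisely the ``genuine care'' you flag at the end, and carrying it out is essentially equivalent to the paper's explicit construction of the lax monoidal structure on $F$. (Incidentally, generation by $Q$---which you call the main obstacle---is the easier step: it follows at once from the fact that evaluation at $\ZZ$ is conservative on right exact functors, since $\ZZ$ generates $\Mod_\ZZ^{c,\omega}$ under finite colimits.)
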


We start by proving a non symmetric monoidal version of this theorem.

\begin{prop}
The functor
\[\Mod_{\ZZ\otimes \ZZ}\to\Fun^{rex}(\Mod_\ZZ^{c,\omega},\Mod_\ZZ)\]
sending $M$ to $M\otimes_\ZZ-$ is an equivalence.
\end{prop}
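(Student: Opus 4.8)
The plan is to identify $\Fun^{rex}(\Mod_\ZZ^{c,\omega},\Mod_\ZZ)$ with a module category by using the universal property of $\Mod_\ZZ^{c,\omega}$ as a stable $\infty$-category and the self-duality of $\Mod_\ZZ$. First I would note that $\Mod_\ZZ^{c,\omega} = (\Mod_\ZZ^{c})^\omega$ is the full subcategory of compact objects, and that $\Mod_\ZZ^c$ is generated under sifted (indeed filtered) colimits by $\Mod_\ZZ^{c,\omega}$; more precisely, $\Mod_\ZZ^c \simeq \mathrm{Ind}(\Mod_\ZZ^{c,\omega})$. Since $\Mod_\ZZ$ is stable and presentable, a right-exact (equivalently, by the \texttt{rex}$\simeq$\texttt{exc} remark, excisive) functor $\Mod_\ZZ^{c,\omega}\to \Mod_\ZZ$ extends uniquely to a filtered-colimit-preserving (hence colimit-preserving) functor $\Mod_\ZZ^c\to \Mod_\ZZ$, so that
\[\Fun^{rex}(\Mod_\ZZ^{c,\omega},\Mod_\ZZ)\simeq \Fun^{L}(\Mod_\ZZ^{c},\Mod_\ZZ).\]
This is really just the observation that $\Mod_\ZZ^{c,\omega}$ is the compact objects of $\Mod_\ZZ^c$ together with \cite[5.5.1.9]{HTT}-type statements, combined with the fact that a finite-colimit-preserving functor out of a category with finite colimits extends to an $\mathrm{Ind}$-completion.

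Next I would invoke the fact that $\Mod_\ZZ^c$ is the connective part and that $\Mod_\ZZ$ itself is, as a presentable stable category, obtained from $\Mod_\ZZ^c$ by inverting the suspension; concretely $\Mod_\ZZ = \mathrm{Stab}(\Mod_\ZZ^c) = \Mod_\ZZ^c[\Sigma^{-1}]$ in $\cat{Pr}^L$, or alternatively just use that $\Mod_\ZZ^c$ and $\Mod_\ZZ$ are both generated by the unit $\ZZ$ under colimits, with $\ZZ$ compact in $\Mod_\ZZ$ and "almost compact" (compact among connective objects) in $\Mod_\ZZ^c$. The cleanest route: $\Mod_\ZZ^c$ is freely generated under sifted colimits by $F_\ZZ$ (Lemma \ref{nonabder}), but even better, as a \emph{stable} presentable category $\Mod_\ZZ$ is freely generated under colimits by one object, namely $\ZZ$; hence $\Fun^L(\Mod_\ZZ,\cat D)\simeq \cat D$ for any presentable $\cat D$. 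Combining with the previous paragraph I get
\[\Fun^{rex}(\Mod_\ZZ^{c,\omega},\Mod_\ZZ) \simeq \Fun^L(\Mod_\ZZ^c,\Mod_\ZZ)\simeq \Fun^L(\Mod_\ZZ,\Mod_\ZZ)\simeq \Mod_\ZZ? \]
That last equivalence is \emph{wrong} as stated — one must be careful, because $\Mod_\ZZ^c \not\simeq \Mod_\ZZ$. Instead $\Fun^L(\Mod_\ZZ^c,\Mod_\ZZ)$ should be computed via $\Mod_\ZZ^c \simeq \Mod_\ZZ^c(\Sp^{c})\otimes\ldots$; the correct identification uses the self-duality of $\Mod_\ZZ$ and the tensor-hom adjunction in $\cat{Pr}^L$: for $R$ a commutative ring, $\Fun^L(\Mod_R,\Mod_R)\simeq \Mod_R\otimes_{\Sp}\Mod_R \simeq \Mod_{R\otimes R}$. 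So the real strategy is: reduce to the claim $\Fun^{rex}(\Mod_\ZZ^{c,\omega},\Mod_\ZZ)\simeq \Fun^L(\Mod_\ZZ,\Mod_\ZZ)$ and then use $\Fun^L(\Mod_\ZZ,\Mod_\ZZ)\simeq \Mod_\ZZ\otimes_\Sp\Mod_\ZZ\simeq\Mod_{\ZZ\otimes\ZZ}$, where the first equivalence sends $M$ to $M\otimes_\ZZ -$ (using $\Mod_\ZZ\otimes_\Sp\cat C\simeq \Fun^L(\Mod_\ZZ\op,\cat C)\simeq \Fun^L(\Mod_\ZZ,\cat C)$ by dualizability of $\Mod_\ZZ$ over $\Sp$, \cite[4.8.4]{HA}).

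So in order the steps are: (i) prove $\Fun^{rex}(\Mod_\ZZ^{c,\omega},\Mod_\ZZ)\simeq \Fun^{L}(\Mod_\ZZ^c,\Mod_\ZZ)$, by extending along $\Mod_\ZZ^{c,\omega}\to\Mod_\ZZ^c\simeq\mathrm{Ind}^{\amalg}(\ldots)$ — here one uses that finite colimits plus filtered colimits generate all colimits, plus Lemma \ref{nonabder}/Corollary \ref{univnonabder}; (ii) prove $\Fun^L(\Mod_\ZZ^c,\Mod_\ZZ)\simeq \Fun^L(\Mod_\ZZ,\Mod_\ZZ)$ — this is the statement that a colimit-preserving functor out of $\Mod_\ZZ^c$ to a \emph{stable} target extends uniquely to $\Mod_\ZZ$, which holds because $\Mod_\ZZ=\Mod_\ZZ^c[\Sigma^{-1}]$ is the stabilization/spectrification and any colimit-preserving functor to a stable category factors through it (cf.\ \cite[1.4.4.5, 1.4.2.21]{HA}); (iii) identify $\Fun^L(\Mod_\ZZ,\Mod_\ZZ)$ with $\Mod_{\ZZ\otimes\ZZ}$ via $M\mapsto (M\otimes_\ZZ -)$, using dualizability of $\Mod_\ZZ$ in $\cat{Pr}^L_\Sp$ and $\Mod_\ZZ\otimes_\Sp\Mod_\ZZ\simeq\Mod_{\ZZ\otimes_\Sp\ZZ}$ (\cite[4.8.5.16, 4.8.4.6]{HA}); (iv) trace through the composite to check the functor is indeed $M\mapsto (M\otimes_\ZZ -)$ restricted to $\Mod_\ZZ^{c,\omega}$, i.e. that $M\otimes_\ZZ-$ is right exact on compact connective modules and lands in $\Mod_\ZZ$, which is clear since tensoring preserves colimits. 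The main obstacle I anticipate is step (ii): one must be genuinely careful that the target being \emph{stable} is what makes the restriction functor $\Fun^L(\Mod_\ZZ,\Mod_\ZZ)\to\Fun^L(\Mod_\ZZ^c,\Mod_\ZZ)$ an equivalence — the point is that $\Mod_\ZZ^c$ sits inside $\Mod_\ZZ$ as the connective objects, every object of $\Mod_\ZZ$ is a filtered colimit of desuspensions of connective ones, and a colimit-preserving functor to a stable category automatically commutes with the relevant (co)limits to make the extension well-defined and unique; this is exactly where one cannot be cavalier and should cite the universal property of $\mathrm{Sp}(\cat C)$ or argue directly that $\Mod_\ZZ^{c,\omega}$ generates $\Mod_\ZZ$ under \emph{all} colimits as a stable category (so its compact objects in $\Mod_\ZZ$ are the perfect $\ZZ$-modules) — hence $\Fun^{ex}((\Mod_\ZZ)^\omega,\Mod_\ZZ)\simeq\Mod_\ZZ$-enriched-End, and $(\Mod_\ZZ)^\omega = \Mod_\ZZ^{\mathrm{perf}}$ is the thick subcategory generated by $\Mod_\ZZ^{c,\omega}$, giving the comparison with right-exact functors on $\Mod_\ZZ^{c,\omega}$ after checking the relevant localization/cofinality.
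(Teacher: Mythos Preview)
Your proposal is correct and follows essentially the same three-step architecture as the paper: (i) pass from $\Fun^{rex}(\Mod_\ZZ^{c,\omega},\Mod_\ZZ)$ to $\Fun^L(\Mod_\ZZ^c,\Mod_\ZZ)$ via Ind-completion, (ii) pass to $\Fun^L(\Mod_\ZZ,\Mod_\ZZ)$ via stabilization, and (iii) identify this with $\Mod_{\ZZ\otimes\ZZ}$. The only differences are cosmetic: for (ii) the paper justifies that $\Mod_\ZZ$ is the presentable stabilization of $\Mod_\ZZ^c$ by citing right-completeness of the canonical $t$-structure \cite[1.3.5.21]{HA} (which you should also invoke rather than merely asserting $\Mod_\ZZ=\Mod_\ZZ^c[\Sigma^{-1}]$), and for (iii) the paper cites \cite[7.1.2.4]{HA} directly rather than going through dualizability of $\Mod_\ZZ$ and \cite[4.8.5.16]{HA}---both routes are valid.
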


\begin{proof}
First, since $\Mod_\ZZ^c$ is compactly generated, the restriction map
\[\Fun^L(\Mod_\ZZ^c,\Mod_\ZZ)\to\Fun^{rex}(\Mod_\ZZ^{c,\omega},\Mod_\ZZ)\]
is an equivalence. Indeed, by \cite[5.3.5.10]{HTT}, this is true for filtered colimit-preserving functors on the left-hand side and arbitrary functors on the right hand side, and by \cite[5.5.1.9]{HTT}, this equivalence restricts to an equivalence between colimit-preserving functors on the right-hand side and finite colimit-preserving functors on the left-hand side.

By \cite[1.3.5.21]{HA}, the canonical $\mathrm t$-structure on $\Mod_\ZZ$ is right complete, and therefore $\Mod_\ZZ \simeq \lim(\dots \xrightarrow{\Omega} \Mod_\ZZ^c \xrightarrow{\Omega} \Mod_\ZZ^c)$, so that $\Mod_\ZZ$ is the presentable stabilization of $\Mod_\ZZ^c$. It follows that the restriction map
\[\Fun^L(\Mod_\ZZ,\Mod_\ZZ)\to\Fun^{rex}(\Mod_\ZZ^{c,\omega},\Mod_\ZZ)\]
is an equivalence. The desired result then follows from \cite[7.1.2.4]{HA}.
\end{proof}

\begin{rem}
One can prove in a similar way that $\Fun^{\oplus}(F_\ZZ,\Mod_\ZZ)\simeq \Mod_{\ZZ\otimes\ZZ}$. From this we can give more details about Remark \ref{remkbadz}. Indeed, we have $\Fun^\times(F_\ZZ\op,\Ab)\simeq \Ab$ so that $\Fun^\times(F_\ZZ\op,\Ch_*(\ZZ))\simeq \Ch_*(\ZZ)$ and in particular
\[W^{-1}\Fun^\times(F_\ZZ\op,\Ch_*(\ZZ))\simeq \Mod_\ZZ \not\simeq \Mod_{\ZZ\otimes\ZZ}\simeq \Fun^\oplus(F_\ZZ,\Mod_\ZZ) \simeq \Fun^\times(F_\ZZ\op, \Mod_\ZZ),\]
where the last equivalence follows from the fact that $\hom(-,\ZZ): F_\ZZ\op\to F_\ZZ$ is an equivalence. 
\end{rem}

\begin{prop}
There is a lax symmetric monoidal structure on the functor $\Mod_{\ZZ\otimes \ZZ}\to \Fun^{rex}(\Mod_\ZZ^{c,\omega},\Mod_\ZZ)$
\end{prop}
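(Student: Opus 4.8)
The plan is to exhibit the functor $M \mapsto M\otimes_\ZZ -$ as a composite of lax symmetric monoidal functors, so that no hands-on verification of coherences is needed. Write the functor as the composition
\[
\Mod_{\ZZ\otimes\ZZ} \xrightarrow{\ \mathrm{res}\ } \Mod_\ZZ \xrightarrow{\ F\ } \Fun(\Mod_\ZZ^{c,\omega},\Mod_\ZZ) \xrightarrow{\ L\ } \Fun^{rex}(\Mod_\ZZ^{c,\omega},\Mod_\ZZ),
\]
where the first arrow is restriction of scalars along the multiplication $\ZZ\otimes\ZZ\to\ZZ$ (lax symmetric monoidal, since the right adjoint of a symmetric monoidal functor is always lax symmetric monoidal), $F$ sends a $\ZZ$-module $N$ to the functor $N\otimes_\ZZ -$, and $L$ is the localization onto right exact functors. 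The last arrow is symmetric monoidal by Theorem \ref{locDay} (more precisely its Corollary, applied to $\cat C = \Mod_\ZZ$ with the appropriate choice of $U$ cutting out right exact functors via Example \ref{examples}). So the only real content is to equip $F$ with a lax symmetric monoidal structure, after which the composite inherits one, and one checks that the underlying functor is the one in the statement.

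To build the lax symmetric monoidal structure on $F$, I would use the universal property of the Day convolution on $\Fun(\Mod_\ZZ^{c,\omega},\Mod_\ZZ)$ (see \cite[2.2.6.8]{HA}): a lax symmetric monoidal functor $\Mod_\ZZ\to\Fun(\Mod_\ZZ^{c,\omega},\Mod_\ZZ)$ is the same data as a lax symmetric monoidal functor $\Mod_\ZZ\times\Mod_\ZZ^{c,\omega}\to\Mod_\ZZ$. The functor we want is $(N,X)\mapsto N\otimes_\ZZ X$, which is the restriction of the symmetric monoidal product $\otimes_\ZZ$ of $\Mod_\ZZ$ along $\Mod_\ZZ\times\Mod_\ZZ^{c,\omega}\to\Mod_\ZZ\times\Mod_\ZZ$; since $\Mod_\ZZ^{c,\omega}\to\Mod_\ZZ$ is symmetric monoidal (connective compact modules are closed under $\otimes_\ZZ$ and contain the unit), this restriction is symmetric monoidal, in particular lax. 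Transporting back through Day convolution gives $F$ its lax symmetric monoidal structure, and by construction the value of $F(N)$ on $X$ is $N\otimes_\ZZ X$, as desired. One should also note that $F(N)$ is automatically right exact, since $N\otimes_\ZZ -$ preserves all colimits, so $L\circ F$ really does land in $\Fun^{rex}$ and agrees with $F$ there.

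The main obstacle, such as it is, is bookkeeping rather than mathematics: one must make sure that the lax symmetric monoidal structure produced this way, after passing through the localization $L$, is compatible with the (strict) symmetric monoidal structure that Theorem \ref{locDay} puts on $\Fun^{rex}(\Mod_\ZZ^{c,\omega},\Mod_\ZZ)$ — i.e. that we are using the \emph{same} Day convolution on the source of $L$ throughout. This is immediate from the statement of Theorem \ref{locDay}(2)--(3), since $L$ is symmetric monoidal and its right adjoint, the inclusion, is lax symmetric monoidal, so composing $F$ with $L$ and recording the lax structure causes no conflict. I would remark that the genuinely harder statement — that this lax structure is in fact \emph{strong} symmetric monoidal, which is Theorem \ref{moneq} — is deferred to the next proposition, and is where the real work (comparing the Day-convolution unit $Q\simeq\ZZ\otimes-$ with the unit $\ZZ$ of $\Mod_{\ZZ\otimes\ZZ}$, using Theorem \ref{maintheoQ}) will take place.
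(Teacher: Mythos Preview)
There is a genuine gap: your factorization through $\mathrm{res}:\Mod_{\ZZ\otimes\ZZ}\to\Mod_\ZZ$ does not produce the correct underlying functor.

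First, a minor point: restriction of scalars along the multiplication $\ZZ\otimes\ZZ\to\ZZ$ is a functor $\Mod_\ZZ\to\Mod_{\ZZ\otimes\ZZ}$, not the other way around. Presumably you meant restriction along one of the two unit maps $\ZZ\to\ZZ\otimes\ZZ$, which does go in the direction you want and is indeed lax symmetric monoidal.

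But even with that fix, the composite is wrong. A $\ZZ\otimes\ZZ$-module $M$ carries \emph{two} $\ZZ$-module structures, and the functor $M\otimes_\ZZ -:\Mod_\ZZ\to\Mod_\ZZ$ uses both: one to form the tensor product, the other to give the output its $\ZZ$-module structure. Once you pass through $\mathrm{res}$ you have thrown away one of them, and your $F$ then uses the remaining structure twice. Concretely, under the equivalence $\Fun^{rex}(\Mod_\ZZ^{c,\omega},\Mod_\ZZ)\simeq\Mod_{\ZZ\otimes\ZZ}$ your composite sends $M$ to $M$ restricted along $\ZZ\otimes\ZZ\xrightarrow{\mu}\ZZ\xrightarrow{\iota}\ZZ\otimes\ZZ$, which is not the identity; in particular your functor is not an equivalence, whereas the one in the proposition is (Theorem \ref{moneq}). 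So the sentence ``one checks that the underlying functor is the one in the statement'' fails.

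The paper avoids this by never collapsing to $\Mod_\ZZ$. It uses the Day universal property directly on $\Mod_{\ZZ\otimes\ZZ}\times\Mod_\ZZ^{c,\omega}\to\Mod_\ZZ$, and then the identification $\Mod_{\ZZ\otimes\ZZ}\simeq\Mod_\ZZ\otimes\Mod_\ZZ$ to rewrite this as the functor $(M,N,A)\mapsto M\otimes U(N\otimes_\ZZ A)$ on $\Mod_\ZZ\otimes\Mod_\ZZ\otimes\Mod_\ZZ$, where $U:\Mod_\ZZ\to\Sp$ is the forgetful functor. Here the two copies of $\ZZ$ are kept separate: $N$ is consumed by $\otimes_\ZZ$ and $M$ supplies the output $\ZZ$-module structure. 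Each piece of this description ($\otimes_\ZZ$, $U$, and $\Mod_\ZZ\otimes\Sp\simeq\Mod_\ZZ$) is visibly (lax) symmetric monoidal, which gives the result.
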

\begin{proof}
We write our functor as a composite 
\[\Mod_{\ZZ\otimes \ZZ}\to \Fun(\Mod_\ZZ^{c,\omega},\Mod_\ZZ)\to \Fun^{rex}(\Mod_\ZZ^{c,\omega},\Mod_\ZZ).\]
By the work of Section \ref{sec : sym mon structure}, the second arrow has a canonical symmetric monoidal structure.

We shall give the first map a lax symmetric monoidal structure. By \cite[2.2.6.8]{HA}, this amounts to giving the adjoint functor
\[\Mod_{\ZZ\otimes \ZZ}\times \Mod_\ZZ^{c,\omega} \to \Mod_\ZZ\]
a lax symmetric monoidal structure. 

We then note that this functor can be written as a composite
\[\Mod_{\ZZ\otimes \ZZ}\times \Mod_\ZZ^{c,\omega}\to \Mod_{\ZZ\otimes \ZZ}\times \Mod_\ZZ\to \Mod_\ZZ,\]
where the first functor has a natural symmetric monoidal structure. The second functor is itself the composite
\[\Mod_{\ZZ\otimes \ZZ}\times \Mod_\ZZ \to \Mod_{\ZZ\otimes \ZZ}\otimes \Mod_\ZZ\to \Mod_\ZZ.\]
The first functor is lax monoidal. Recall, from \cite[4.8.5.16]{HA}, that $\Mod_\ZZ\otimes \Mod_\ZZ\simeq \Mod_{\ZZ\otimes \ZZ}$ via $(M,N)\mapsto M\otimes N$. It follows that we only need to give a lax symmetric monoidal structure to the functor $\Mod_\ZZ\otimes \Mod_\ZZ\otimes \Mod_\ZZ\to \Mod_\ZZ$ sending $ (M,N,A)$ to $M\otimes U(N\otimes_\ZZ A)$ where $U: \Mod_\ZZ\to \Sp$ denotes the forgetful functor. This has a clear lax symmetric monoidal structure. Indeed, the functor $\otimes_\ZZ : \Mod_\ZZ\otimes \Mod_\ZZ \to \Mod_\ZZ$ is symmetric monoidal, the functor $U: \Mod_\ZZ\to \Sp$ is lax symmetric monoidal, and the equivalence $\Mod_\ZZ\otimes \Sp\xrightarrow{\simeq} \Mod_\ZZ$ is symmetric monoidal. 
\end{proof}

The claim of Theorem \ref{moneq} is now that this is actually a symmetric monoidal equivalence.

\begin{proof}[Proof of Theorem \ref{moneq}]
We let $F$ denote our functor for this proof. We need to prove two things. First, that the canonical map $Q\to F(\ZZ\otimes \ZZ)$ is an equivalence and second that for any $M,N\in~\Mod_{\ZZ\otimes \ZZ}$, the canonical map $F(M)\otimes_{rex}^{Day} F(N)\to F(M\otimes_{\ZZ\otimes\ZZ}N)$ is an equivalence, where $\otimes_{rex}^{Day}$ denotes the tensor product on $\Fun^{rex}$.

We already know that $F$ is an equivalence on the underlying categories and that both tensor products commute with colimits and desuspensions. It follows that, in order to prove the second point, we can restrict to $M=N=\ZZ\otimes\ZZ$. Then, for this one, various compatibility relations imply that it actually suffices to check the first point, that is, that $Q\to F(\ZZ\otimes \ZZ)$ is an equivalence. 

For this, we use Lemma \ref{UniqueAlg}. Indeed, since $F$ was given a lax symmetric monoidal structure, $Q\to F(\ZZ\otimes\ZZ)$ is the unit map of a commutative algebra structure on $F(\ZZ\otimes \ZZ)$. Therefore, if we know, for some other reason, that $F(\ZZ\otimes \ZZ)$ is equivalent to $Q$, then it will follow that this unit map must be an equivalence. 

So we are reduced to proving that $F(\ZZ\otimes \ZZ)$ is equivalent to $Q$, asbtractly. For this, we use the Yoneda lemma, as well as Theorem \ref{QisAdd}. Indeed, we have, naturally in $M\in \Mod_{\ZZ\otimes \ZZ}$~: 
\[\map_{\Mod_{\ZZ\otimes\ZZ}}(\ZZ\otimes \ZZ, M)\simeq \map_{\Sp}(\SS, M)\simeq \Omega^\infty M\]
(we have suppressed the forgetful functor $\Mod_{\ZZ\otimes \ZZ}\to \Sp$ from the notation).

We also have, naturally in $M$~: 
\begin{align*}
\map_{\Fun^{rex}(\Mod_\ZZ^{c,\omega},\Mod_\ZZ)}(Q,F(M))&\simeq \map_{\Fun(\Mod_\ZZ^{c,\omega},\Mod_\ZZ)}(\ZZ[-],F(M))\\
&\simeq \map_{\Fun(\Mod_\ZZ^{c,\omega},\cat{Sp})}(\Sigma^\infty_+\map(\ZZ,-),F(M))\\
&\simeq \map_{\Fun(\Mod_\ZZ^{c,\omega},\cat S)}(\map(\ZZ,-),\Omega^\infty \circ F(M))\\ 
&\simeq \Omega^\infty(F(M)(\ZZ)) \\
&\simeq \Omega^\infty M,
\end{align*}
where we again suppressed the forgetful functor from the notation, as well as the inclusion functor $\Fun^{rex}(\Mod_\ZZ^{c,\omega},\Mod_\ZZ)\to \Fun(\Mod_\ZZ^{c,\omega},\Mod_\ZZ)$. In this equivalence, the first line uses Theorem \ref{QisD1}, and in the second to last line, we use the Yoneda lemma applied to $\Mod_\ZZ^{c,\omega}$.  It follows, by the Yoneda lemma, that $F(\ZZ\otimes \ZZ)\simeq Q$. By what we said above, this concludes the proof. 
\end{proof}

We may now prove Theorem \ref{maintheoQ}:

\begin{proof}[Proof of Theorem \ref{maintheoQ}]
From Theorem \ref{moneq}, we deduce that the image of $\ZZ\otimes\ZZ$ via this equivalence must coincide with the unit of $\Fun^{rex}(\Mod_\ZZ^{c,\omega},\Mod_\ZZ)$ namely $Q$. Explicitly, this is saying that the functor $M\mapsto \ZZ\otimes M$ is equivalent to $Q$ in the category $\Fun^{rex}(\Mod_\ZZ^{c,\omega},\Mod_\ZZ)$. 

As $\Mod_\ZZ^c \simeq \mathrm{Ind}(\Mod_\ZZ^{c,\omega})$, we can apply \cite[Proposition 4.8.1.10]{HA} to get that the category of filtered-colimit preserving lax symmetric monoidal functors $\Mod_\ZZ^c\to \Mod_\ZZ$ is equivalent to that of lax symmetric monoidal functors $\Mod_\ZZ^{c,\omega}\to \Mod_\ZZ$, the equivalence being given by restriction. Since $Q$ and $\ZZ\otimes-$ have the same restriction to $\Mod_\ZZ^{c,\omega}$, it suffices to check that they both preserve filtered colimits.

Obviously the functor $M\mapsto \ZZ\otimes M$ preserves all colimits. This is also the case for $Q$ by definition (see Definition \ref{defiQ}).
\end{proof}

\section{Comparison with Richter's work}\label{section : Richter}

In this section we show that the lax symmetric monoidal structure that we have on the functor $Q:\Ab\to\Mod_{\ZZ}$ coincides with the one constructed by Richter in \cite{richter}. Let $(C,\otimes,1_{C})$ be either $\Ab^\omega$ or $\Ab_\kappa$ with its usual symmetric monoidal structure.

We use \cite{hinich} to explain how to construct, from the $E_\infty$-algebra structure on $Q$ constructued by Richter a commutative algebra structure in the sense of \cite{HA} on $Q$, which we will call $Q^R$. We first fix a notation. Given an operad $P$ in the category $\Ch(\ZZ)$ and a symmetric monoidal $1$-category $\ucat{M}$ enriched over $\Ch(\ZZ)$, we denote by $\Alg^{st}_P(\ucat{M})$ the category of $P$-algebras in $\ucat{M}$. Explicitly, a $P$-algebra in $\ucat{M}$ is the data of an object $A$ of $\ucat{M}$ and a map of dg-operads from $P$ to the operad of endomorphisms of $A$. In \cite{hinich}, Hinich introduces the notion of homotopically sound dg-operad. The precise definition is irrelevant here, the only important fact is that, if $P$ is homotopically sound, the category $\Alg^{st}_P(\Ch(\ZZ))$ admits a projective model structure (weak equivalences and fibrations are colorwise) and  the underlying $\infty$-category of this model category only depends on the quasi-isomorphism type of $P$. Let us also mention that cofibrant dg-operads are homotopically sound (see \cite[Proposition 2.3.2]{hinich}) which implies that any dg-operad can be replaced by a quasi-isomorphic homotopically sound operad.

  We now recall the main theorem of \cite{hinich}. 

\begin{theo}\label{theo : main Hinich}
Let $P$ be a simplicial operad (that we view as an $\infty$-operad) and let $R\to C_*(P)$ be a homotopically sound replacement. Then, there is an equivalence
\[W^{-1}\Alg_R^{st}(\Ch_*(\ZZ)) \xrightarrow{\simeq} \Alg_{P}(\Mod_\ZZ).\]
\end{theo}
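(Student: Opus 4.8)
The plan is to reduce the statement to the rectification theorem for algebras over operads together with Hinich's comparison of the underlying $\infty$-categories of model categories of dg-operad algebras. First I would recall that by \cite{hinich}, if $P$ is a homotopically sound dg-operad then $\Alg^{st}_P(\Ch_*(\ZZ))$ carries a projective model structure, and its underlying $\infty$-category depends only on the quasi-isomorphism type of $P$; in particular, choosing a cofibrant (hence homotopically sound) replacement $R \to C_*(P)$ and another cofibrant replacement $R' \to R$ if needed, one gets $W^{-1}\Alg^{st}_R(\Ch_*(\ZZ)) \simeq W^{-1}\Alg^{st}_{R'}(\Ch_*(\ZZ))$ canonically. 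So without loss of generality one may assume $R$ is cofibrant.

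Next I would identify $W^{-1}\Alg^{st}_R(\Ch_*(\ZZ))$ with $\Alg_R(\Mod_\ZZ)$, the $\infty$-category of algebras over $R$ viewed as an $\infty$-operad in $\Mod_\ZZ$. This is the rectification statement: for a cofibrant dg-operad $R$, strict $R$-algebras in the model category $\Ch_*(\ZZ)$ model $\infty$-categorical $R$-algebras in the underlying symmetric monoidal $\infty$-category $\Mod_\ZZ = W^{-1}\Ch_*(\ZZ)$. One route is to invoke the general rectification machinery of \cite[Theorem 4.1.8.4]{HA} (algebras over an admissible $\infty$-operad in a sufficiently nice symmetric monoidal model category are computed by the strict model structure), after checking that $\Ch_*(\ZZ)$ with its projective model structure satisfies the relevant hypotheses and that a cofibrant dg-operad is admissible/$\Sigma$-cofibrant in the required sense. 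Alternatively — and this is likely the cleanest path, since the paper cites Hinich — this identification is itself part of the content of \cite{hinich}, so I would simply cite the relevant proposition there.

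Then I would pass from $R$ to the simplicial operad $P$. Since $R \to C_*(P)$ is a quasi-isomorphism of dg-operads and $R$ is homotopically sound, the previous two steps give $W^{-1}\Alg^{st}_R(\Ch_*(\ZZ)) \simeq \Alg_{C_*(P)}(\Mod_\ZZ)$ where $C_*(P)$ is regarded as an $\infty$-operad. Finally, one needs $\Alg_{C_*(P)}(\Mod_\ZZ) \simeq \Alg_P(\Mod_\ZZ)$, i.e.\ that the $\infty$-operad underlying the normalized chains $C_*(P)$ of the simplicial operad $P$ agrees with the $\infty$-operad $P$ itself, \emph{as $\infty$-operads acting on $\Mod_\ZZ$}. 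This is because the free-forgetful (Dold--Kan / normalization) symmetric monoidal functor $\mathrm s\mathrm{Set} \to \mathrm s\Ab \to \Ch_{\ge 0}(\ZZ)$ induces $\mathcal{S} \to \Mod_\ZZ$ which is symmetric monoidal, and operads act through this; more precisely the $\infty$-operad presented by $P$ maps to that presented by $C_*(P)$ compatibly with the symmetric monoidal functor $\mathcal S \to \Mod_\ZZ$, and one checks this map is an equivalence of $\infty$-operads since the $n$-ary spaces/chains correspond under $\mathcal S \to \Mod_\ZZ$. Composing all the equivalences yields the desired $W^{-1}\Alg^{st}_R(\Ch_*(\ZZ)) \xrightarrow{\simeq} \Alg_P(\Mod_\ZZ)$.

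The main obstacle is the rectification step, i.e.\ establishing that strict dg-algebras over a cofibrant (homotopically sound) dg-operad faithfully model $\infty$-categorical algebras in $\Mod_\ZZ$; this requires care about cofibrancy/admissibility hypotheses on both the operad and the model category $\Ch_*(\ZZ)$, and about the interplay between the strict projective model structure on algebras and the Bousfield-type localization presenting $\Mod_\ZZ$. In practice the honest thing to do is to attribute this — and indeed the whole statement — directly to Hinich \cite{hinich}, since it is precisely his main theorem; so the ``proof'' is really a pointer to the relevant statement in \emph{loc.\ cit.}, with the above steps indicating why it is the form we need.
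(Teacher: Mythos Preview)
The paper does not give a proof of this theorem at all: it is stated as a recollection of ``the main theorem of \cite{hinich}'' and is used as a black box. Your instinct in the final paragraph is exactly right---the paper simply attributes the statement to Hinich and moves on---so your sketch of the rectification argument, while reasonable as an outline of what Hinich's theorem says and why, is more than the paper itself provides.
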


\begin{cons}\label{cons : Richter}
Richter constructs an $E_\infty$-dg-operad $\mathcal O$ together with an action of $\mathcal O$ on the functor $Q$. More precisely she constructs a $(\mathcal O\times C)$-algebra in $\Ch_*(\ZZ)$ where $C$ here is viewed as the operad underlying a symmetric monoidal ($1$-)category, and the map $\mathcal O\times C\to \Ch_*(\ZZ)$ is $\Ch_*(\ZZ)$-enriched in the $\mathcal O$-coordinate. 

By that, we more precisely mean that for each $n$ and each $(A_1,...,A_n;A)\in C$, we have a map $L: \mathcal O(n)\times \mathrm{Mult}(A_1,...,A_n;A)\to \mathrm{Mult}(Q(A_1),...,Q(A_n);Q(A))$ such that at a fixed $f\in \mathrm{Mult}(A_1,...,A_n;A)$, the map $g\mapsto L(g,f) $ is a chain map. 

Equivalently, this is a $\mathcal O\otimes_{\ZZ} \ZZ[C]$-algebra in $\Ch_*(\ZZ)$, where $\mathcal O\otimes_{\ZZ} \ZZ[C]$ is the operad with colors the colors of $C$, and 
\[\mathrm{Mult}_{O\otimes_{\ZZ} \ZZ[C]}(A_1,...,A_n;A) = \mathcal{O}(n)\otimes_{\ZZ} \ZZ[\mathrm{Mult}_C(A_1,...,A_n;A)]\]
(note that the multi-operation spaces in $C$ are just sets). 

 Let $C_*$ denote the singular chains functor. Note that because $C$ is a discrete simplicial operad, for any simplicial operad $E$, we have an isomorphism of operads in chain complexes
\[C_*(E\times C)\cong C_*(E)\otimes_{\ZZ} \ZZ[C].\]
So if we take $E$ to be a cofibrant $E_\infty$-operad in simplicial sets, then $C_*(E)$ will be a cofibrant dg-operad. In particular, there exists a quasi-isomorphism of operads $C_*(E)\to \mathcal O$ and thus a dg-operad map $C_*(E\times C)\to \mathcal O\otimes_{\ZZ} \ZZ[C]$. So Richter constructs a $C_*(E\times C)$-algebra in $\Ch_*(\ZZ)$. Finally, let us take a homotopically sound replacement $\mathcal R\to C_*(E\times C)$ in the sense of \cite{hinich}. Then, using Theorem \ref{theo : main Hinich}, we can view Richter's construction as constructing an object in $\Alg_{E\times C}(\Mod_{\ZZ})$ whose underlying functor $C\to \Mod_{\ZZ}$ is equivalent to $Q$. We call this object $Q^R$. 

Note that $Q^R\in \Alg_{E\times C}(\Mod_\ZZ)$, so by the defining universal property of Day convolution (see \cite[2.2.6.8]{HA}), we  may view $Q^R$ as an object of $\in \Alg_E(\Fun(C,\Mod_\ZZ))$. Since $E$ is equivalent to the commutative operad, we have an equivalence 
\[\Alg_E(\Fun(C,\Mod_\ZZ))\simeq \Alg_{\mathcal Com}(\Fun(C,\Mod_\ZZ)).\]
Seen through this equivalence, Richter's construction yields a commutative algebra in $\Fun(C,\Mod_\ZZ)$ that we still denote by $Q^R$ and whose underlying functor $C\to \Mod_{\ZZ}$ is equivalent to $Q$. 
\end{cons}

From this construction, we can extract the following proposition.

\begin{prop}
Let $C=\Ab^{\omega}$ or $C=\Ab_{\kappa}$ for any cardinal $\kappa$. Then Richter's construction produces a lax symmetric monoidal functor $Q^R:C\to \Mod_\ZZ$ which is equivalent to $Q$.
\end{prop}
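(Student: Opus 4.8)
The plan is to observe that the only genuine content of the statement is \emph{already} packaged in Construction \ref{cons : Richter}: there we produced, from Richter's data, a commutative algebra object $Q^R$ in $\Fun(C,\Mod_\ZZ)$ with the Day convolution monoidal structure, whose underlying functor is equivalent to $Q$. So the task reduces to unwinding what a commutative algebra in $(\Fun(C,\Mod_\ZZ),\otimes^{Day})$ \emph{is}. By the universal property of Day convolution \cite[2.2.6.8]{HA}, a commutative (equivalently, lax symmetric monoidal with respect to the trivial operad structure) algebra object in $\Fun(C,\Mod_\ZZ)$ corresponds precisely to a lax symmetric monoidal functor $C\to \Mod_\ZZ$; this is the standard dictionary between $\Alg_{\mathcal{C}om}(\Fun(C,\cat D))$ and $\Fun^{lax}(C,\cat D)$ (see also \cite[2.2.6.8]{HA} applied with $\cat B$ the trivial operad). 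Running $Q^R$ through this equivalence yields a lax symmetric monoidal functor $C\to \Mod_\ZZ$, which I will still call $Q^R$.

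First I would recall from Construction \ref{cons : Richter} the commutative algebra $Q^R \in \Alg_{\mathcal{C}om}(\Fun(C,\Mod_\ZZ))$ whose underlying object is equivalent to $Q\colon C\to \Mod_\ZZ$. Next I would invoke the Day convolution dictionary to transport this to a lax symmetric monoidal functor $Q^R\colon C\to \Mod_\ZZ$; by construction the underlying functor of this lax symmetric monoidal functor is the underlying object of the algebra, hence equivalent to $Q$. That is the entire assertion of the proposition, so the proof is essentially a reference back to Construction \ref{cons : Richter} together with the Day convolution universal property.

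The only point requiring a word of care is the passage from ``$E_\infty$-algebra in $\Fun(C,\Mod_\ZZ)$'' to ``lax symmetric monoidal functor'': one must check that the equivalence $\Alg_E \simeq \Alg_{\mathcal{C}om}$ used in Construction \ref{cons : Richter} is compatible with the underlying functors, and that under \cite[2.2.6.8]{HA} the underlying functor $C\to \Mod_\ZZ$ of the lax symmetric monoidal functor agrees with the underlying object of the $\mathcal{C}om$-algebra in $\Fun(C,\Mod_\ZZ)$. Both are immediate from the fact that all the relevant equivalences are identities on underlying data and only reorganize the multiplicative structure. I therefore do not expect any genuine obstacle here; the substance of the comparison was carried out in Construction \ref{cons : Richter}, and this proposition merely records its output in the language of lax symmetric monoidal functors. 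The real work — showing that this $Q^R$ \emph{agrees} with the lax symmetric monoidal structure on $Q$ constructed via the universal property in Section \ref{sec : sym mon structure} — is the subject of the subsequent results and is not part of this statement.

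\begin{proof}
By Construction \ref{cons : Richter}, Richter's data yields a commutative algebra object $Q^R$ in the category $\Fun(C,\Mod_\ZZ)$ equipped with the Day convolution symmetric monoidal structure, whose underlying functor $C\to \Mod_\ZZ$ is equivalent to $Q$. By the universal property of Day convolution \cite[2.2.6.8]{HA}, giving a commutative algebra in $\Fun(C,\Mod_\ZZ)$ is the same as giving a lax symmetric monoidal functor $C\to \Mod_\ZZ$, and under this equivalence the underlying functor of the lax symmetric monoidal functor is the underlying object of the algebra. Applying this to $Q^R$ produces a lax symmetric monoidal functor $Q^R\colon C\to \Mod_\ZZ$ whose underlying functor is equivalent to $Q$, as claimed.
\end{proof}
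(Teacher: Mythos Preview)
You have misread the statement. The phrase ``which is equivalent to $Q$'' does not mean merely that the \emph{underlying functor} of $Q^R$ is $Q$ --- that fact is indeed already recorded in Construction \ref{cons : Richter} and would not warrant a separate proposition. Rather, $Q$ already carries the lax symmetric monoidal structure constructed in Section \ref{sec : sym mon structure} (as the unit of $\Fun^\oplus(C,\Mod_\ZZ)$), and the proposition asserts that $Q^R$ and $Q$ are equivalent \emph{as lax symmetric monoidal functors}. This is confirmed by the paper's own proof, which invokes Corollary \ref{UniqueQ}, and by the opening sentence of Section \ref{section : Richter}, which states that the goal is to show that the two lax symmetric monoidal structures on $Q$ coincide.

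Your proof therefore stops one step short. What is missing is precisely the appeal to Corollary \ref{UniqueQ}: since $Q$ is the monoidal unit of $\Fun^\oplus(C,\Mod_\ZZ)$, it carries a \emph{unique} commutative algebra structure; hence any commutative algebra in $\Fun(C,\Mod_\ZZ)$ whose underlying object is $Q$ --- in particular $Q^R$ --- must be equivalent as a commutative algebra (equivalently, as a lax symmetric monoidal functor) to $Q$ with its unit structure. Your remark that ``the real work \dots\ is the subject of the subsequent results'' is not correct: the subsequent results (Propositions \ref{prop : Richter kappa} and \ref{prop : Richter without kappa}) only deal with removing the cardinality restriction on $C$, not with comparing the two monoidal structures; that comparison is exactly what this proposition accomplishes, via Corollary \ref{UniqueQ}.
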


\begin{proof}
Construction \ref{cons : Richter} produces a lax symmetric monoidal functor $Q^R$ whose underlying functor is $Q$. The result then follows from Corollary \ref{UniqueQ}.
\end{proof}

This proposition should be sufficient for any application as we can always choose a $\kappa$ big enough so that all the objects that we are interested in live in $\Ab_\kappa$. 

We can also obtain a statement that is independant of a choice of $\kappa$ using the theory of Grothendieck universes. Let $V_0$ and $V_1$ be two Grothendieck universes with $V_0\in V_1$. We shall denote by $\Ab(V_0)$ (resp. $\Ab(V_1)$) the category of abelian groups in $V_0$ (resp. in $V_1$). Likewise, we denote by $\Ch(\ZZ)(V_0)$ (resp. $\Ch(\ZZ)(V_1)$) the chain complexes in $\Ab(V_0)$ (resp. $\Ab(V_1)$). Finally, we denote by $\Mod_\ZZ(V_0)$ (resp. $\Mod_\ZZ(V_1)$ the localization of $\Ch(\ZZ)(V_0)$ (resp. $\Ch(\ZZ)(V_1)$) at the quasi-isomorphisms.

\begin{prop}\label{prop : Richter kappa}
Let $\Ch_*(\ZZ)(V_0)\to \Ch_*(\ZZ)(V_1)$ be the canonical fully-faithful inclusion. Then the induced functor $\Mod_\ZZ(V_0)\to \Mod_\ZZ(V_1)$ is fully faithful. 
\end{prop}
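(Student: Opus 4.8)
The plan is to reduce the statement about $\infty$-categorical localizations to a purely homotopical statement about chain complexes, using that a fully faithful functor of $\infty$-categories is detected on mapping spaces. Concretely, writing $L_i : \Ch_*(\ZZ)(V_i) \to \Mod_\ZZ(V_i)$ for the localization functors and $\iota$ for the inclusion $\Ch_*(\ZZ)(V_0) \to \Ch_*(\ZZ)(V_1)$, we want to show that for any two chain complexes $X, Y$ in $\Ab(V_0)$, the induced map
\[
\map_{\Mod_\ZZ(V_0)}(L_0 X, L_0 Y) \to \map_{\Mod_\ZZ(V_1)}(L_1 \iota X, L_1 \iota Y)
\]
is an equivalence of spaces. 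Essential surjectivity onto its image is not an issue: the claim is only full faithfulness. So the whole proof is this mapping-space computation.

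First I would reduce to a convenient model. Since $L_i$ invert quasi-isomorphisms and every object receives a quasi-isomorphism from a cofibrant (e.g. degreewise projective, or just $K$-projective) replacement, and the mapping spaces in the localization are computed by a fibrant-cofibrant replacement, I may assume $X$ is a bounded-below complex of free abelian groups (this replacement can be taken inside $V_0$, hence is preserved by $\iota$). Then the mapping space $\map_{\Mod_\ZZ(V_i)}(L_i X, L_i Y)$ is modeled by the Dold–Kan / hom-complex description: its homotopy groups are $\pi_n \map = \Ext^{-n}$-type groups, or more precisely, it is the underlying space of the truncation of the internal hom chain complex $\underline{\Hom}(X,Y)$ (the usual hom-complex of chain complexes, which for $X$ degreewise projective computes derived hom). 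The key point is that this hom-complex $\underline{\Hom}(X,Y)$ is \emph{the same abelian group in each degree} whether computed in $\Ab(V_0)$ or $\Ab(V_1)$: in degree $n$ it is $\prod_k \Hom_{\Ab}(X_k, Y_{k+n})$, and since $X$ and $Y$ lie in $V_0$, this product and each hom-set are literally the same set in both universes (a Grothendieck universe is closed under the relevant set-theoretic operations, and $\Hom$-sets, products indexed by sets in $V_0$, etc., don't grow). Hence the comparison map on mapping spaces is modeled by the identity map of a simplicial set, so it is an equivalence.

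The one step that needs a little care — and which I expect to be the main obstacle to phrase cleanly — is making precise that the mapping space in $\Mod_\ZZ(V_i) = W^{-1}\Ch_*(\ZZ)(V_i)$ really is computed by this hom-complex in a way that is manifestly compatible across the two universes. This is standard (it is the statement that the projective model structure on $\Ch_*(\ZZ)$ presents $\Mod_\ZZ$, together with the enrichment), but one should either cite it or observe that the Dwyer–Kan simplicial localization of $\Ch_*(\ZZ)(V_i)$ at quasi-isomorphisms, restricted to degreewise-free bounded-below objects in the source, has mapping spaces given by the Dold–Kan nerve of $\tau_{\geq 0}\underline{\Hom}(X,-)$, and this construction visibly commutes with $\iota$. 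An alternative, slightly slicker route that avoids models: observe that $\Mod_\ZZ(V_1)$ is generated under $V_1$-small colimits by $\ZZ$, that $\map(\ZZ, M) = \Omega^\infty M$, and that for $M$ in the image of $\iota$ the underlying spectrum/homotopy groups are unchanged (homotopy groups of a complex in $\Ab(V_0)$ are computed by kernels and cokernels, which stay in $V_0$); then bootstrap full faithfulness from the generator $\ZZ$ to all of $\Mod_\ZZ(V_0)$ using that $\Mod_\ZZ(V_0)$ is generated by $\ZZ$ under $V_0$-small colimits and that $\iota_*$ preserves these colimits and the generator. Either way the content is the trivial-sounding but genuine point that homs, products, kernels and cokernels of sets/groups in $V_0$ don't change when computed in $V_1$, and the only real work is packaging this into the $\infty$-categorical statement.
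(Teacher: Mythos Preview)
Your first approach is essentially the paper's: both cofibrantly replace the source in $V_0$, observe that this remains cofibrant in $V_1$, and then compare hom-complexes. The paper streamlines slightly by first reducing to the homotopy category (using that both localizations are stable), so it only needs to match $\pi_0$ of mapping spaces, i.e.\ chain maps modulo chain homotopy; you instead compute the full mapping space via the internal hom complex, which is fine but a bit more to say. One small slip: you write ``I may assume $X$ is a bounded-below complex of free abelian groups'', but an arbitrary unbounded complex need not be quasi-isomorphic to a bounded-below one --- you should stick with your parenthetical $K$-projective replacement (the paper phrases this as ``cofibrant in the projective model structure'' and notes that generating cofibrations are universe-independent, hence cofibrancy transfers).

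Your alternative bootstrap-from-$\ZZ$ argument is a genuinely different and perfectly valid route: it trades the model-categorical input for the categorical facts that $\Mod_\ZZ(V_0)$ is generated under $V_0$-small colimits by $\ZZ$, that $\iota$ preserves these colimits, and that $\map(\ZZ,-)\simeq\Omega^\infty$ is visibly unchanged. This avoids any discussion of cofibrant replacements at the cost of a colimit-generation argument.
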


\begin{proof}
Since both categories are stable, it suffices to check the claim on homotopy categories. 

We can then use the projective model structure on $\Ch_*(\ZZ)(V_0)$ to replace any pair of objects with a pair of bifibrant objects, for which it is clear that the hom-sets in the homotopy category agree (because in the case of $\Ch_*(\ZZ)$, homotopies in the model category are the same as the usual chain homotopies).

We just need to make sure that cofibrant objects in $\Ch_*(\ZZ)(V_0)$ are cofibrant in $\Ch_*(\ZZ)(V_1)$. This is clear using the fact that in cofibrantly generated model categories, cofibrant objects are retracts of cell complexes, and that in the case of $\Ch_*(\ZZ)$, the generating cofibrations do not depend on the universe. 
\end{proof}

We can then apply the previous work to $C= \Ab(V_0)$, we then have an equivalence $Q^R\simeq Q$ as commutative algebras in $\Fun(\Ab(V_0), \Mod_\ZZ(V_1))$, so as lax symmetric monoidal functors $\Ab(V_0)\to \Mod_\ZZ(V_1)$. However, they both land in the full subcategory $\Mod_\ZZ(V_0)$ (the explicit model of the $Q$-construction clearly shows this) so that they are equivalent as lax symmetric monoidal functors $\Ab(V_0)\to \Mod_\ZZ(V_0)$. 

Taking $V_0$ to be our universe of interest we obtain the following proposition.

\begin{prop}\label{prop : Richter without kappa}
Richter's construction endows $Q$ with a lax symmetric monoidal structure $Q^R$ on $\Ab$. Moreover, we have an equivalence $Q^R\simeq Q$ in the category of lax symmetric monoidal functors $\Ab\to\Mod_\ZZ$.
\end{prop}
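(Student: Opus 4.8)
The plan is to bootstrap from the version already proved over $\Ab_\kappa$ — the proposition above giving $Q^R\simeq Q$ as lax symmetric monoidal functors $C\to\Mod_\ZZ$ for $C=\Ab_\kappa$ — to all of $\Ab$ by means of a Grothendieck universe argument, with Proposition \ref{prop : Richter kappa} providing the passage between universes.

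First I would fix Grothendieck universes $V_0\in V_1$, taking $V_0$ to be the universe we ordinarily work in, so that $\Ab$ denotes $\Ab(V_0)$ and $\Mod_\ZZ$ denotes $\Mod_\ZZ(V_0)$. Relative to $V_1$, the category $\Ab(V_0)$ is small — it is of the form $\Ab_\kappa$ for an inaccessible, hence infinite, cardinal $\kappa$ — so the entire development of Section \ref{sec : sym mon structure} and Construction \ref{cons : Richter} applies to $C=\Ab(V_0)$ with $\Mod_\ZZ(V_1)$ playing the role of $\Mod_\ZZ$. Thus Construction \ref{cons : Richter} yields a lax symmetric monoidal functor $Q^R\colon\Ab(V_0)\to\Mod_\ZZ(V_1)$ with underlying functor $Q$, and Corollary \ref{UniqueQ} (read inside $\Fun(\Ab(V_0),\Mod_\ZZ(V_1))$ with its Day convolution structure) shows that the commutative algebra structure on $Q$ witnessing this is the unique one; hence $Q^R\simeq Q$ in the category of lax symmetric monoidal functors $\Ab(V_0)\to\Mod_\ZZ(V_1)$.

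It then remains to descend this equivalence along the inclusion $\Mod_\ZZ(V_0)\to\Mod_\ZZ(V_1)$, which is fully faithful by Proposition \ref{prop : Richter kappa} and is symmetric monoidal, being the localization of the fully faithful strong symmetric monoidal inclusion $\Ch_*(\ZZ)(V_0)\to\Ch_*(\ZZ)(V_1)$ (the tensor product of chain complexes of $V_0$-small groups stays $V_0$-small). Postcomposition with a fully faithful symmetric monoidal functor is fully faithful on categories of lax symmetric monoidal functors out of a fixed category, its essential image being the lax functors whose underlying functor lands in the essential image of the given functor. The explicit model of the $Q$-construction in Section \ref{section : QC} only forms free abelian groups on sets built from the input, so $Q$ — and therefore $Q^R$, which has the same underlying functor — factors through $\Mod_\ZZ(V_0)$. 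Consequently the equivalence $Q^R\simeq Q$ in $\Fun^{lax}(\Ab(V_0),\Mod_\ZZ(V_1))$ comes from, and in particular implies, an equivalence in $\Fun^{lax}(\Ab(V_0),\Mod_\ZZ(V_0))=\Fun^{lax}(\Ab,\Mod_\ZZ)$, and $Q^R$ acquires there its lax symmetric monoidal structure; taking $V_0$ to be the ambient universe finishes the proof.

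I expect the one delicate point to be this last descent: one should take care that ``postcomposition with a fully faithful symmetric monoidal functor is fully faithful on $\Fun^{lax}$, with essential image the expected full subcategory'' — a general fact about maps of $\infty$-operads — is invoked with an explicit reference, and that the identification of that essential image with the functors landing in $\Mod_\ZZ(V_0)$ is spelled out. The rest is a direct application of results already in hand.
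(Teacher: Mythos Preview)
Your argument is correct and is essentially the paper's own: the paragraph immediately preceding the proposition carries out exactly the Grothendieck-universe descent you describe, applying the uniqueness result (Corollary \ref{UniqueQ}) with $C=\Ab(V_0)$ and target $\Mod_\ZZ(V_1)$ and then descending along the fully faithful symmetric monoidal inclusion of Proposition \ref{prop : Richter kappa}. The paper's formal proof environment additionally records an $\mathrm{Ind}$-extension from $\Ab^\omega$ via \cite[Proposition 4.8.1.10]{HA}, but this is a supplement rather than a different strategy.
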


\begin{proof}
By Proposition \ref{prop : Richter kappa}, they are equivalent as lax monoidal functors on $\Ab^{\omega}$. Just as in the proof of Theorem \ref{maintheoQ}, we can use \cite[Proposition 4.8.1.10.]{HA} and the fact that $\Ab\simeq \mathrm{Ind}(\Ab^{\omega})$ to conclude that $Q^R\simeq Q$ in the category of lax monoidal functors from $\Ab$ to $\Mod_{\ZZ}$.
\end{proof}

\section{Conclusion}\label{section : conclusion}

We can now prove our main Theorem. We have the category $\Alg_{\mathcal{AB}}(\Ab)$ of associative algebras and bimodules in the category of abelian groups. This category has a symmetric monoidal structure given by the formula
\[(A,M)\otimes(B,N)=(A\otimes_{\ZZ}B,M\otimes_{\ZZ}N).\]
The map of commutative algebras $\alpha:\SS\to \ZZ$ induces a lax symmetric monoidal functor $\alpha^*:\Alg_{\mathcal{AB}}(\Ab)\to \Alg_{\mathcal{AB}}(\cat{Sp})$. To simplify notations, we will simply denote by $(A,M)$ the value of this functor on an object $(A,M)$ of $\Alg_{\mathcal{AB}}(\Ab)$. In particular,
\[(A,M)\mapsto \THH(A,M)\]
defines a lax symmetric monoidal functor from $\Alg_{\mathcal{AB}}(\Ab)$ to $\cat{Sp}$.

\begin{theo}\label{theo : main}
There is an equivalence of lax symmetric monoidal functors from $\Alg_{\mathcal{AB}}(\Ab)$ to $\cat{Sp}$
\[\HML(A,M)\simeq\THH(A,M).\]
\end{theo}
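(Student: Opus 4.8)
The plan is to deduce the theorem from the base change formula of Theorem~\ref{theo : base change}, applied to the unit map $\alpha\colon\SS\to\ZZ$, using the symmetric monoidal identification $Q\simeq\ZZ\otimes-$ of Theorem~\ref{maintheoQ} to recognize the two sides.

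The first step is to construct a lax symmetric monoidal functor $\Phi\colon\Alg_{\mathcal{AB}}(\Ab)\to\Alg_{\mathcal{AB}}(\SS,\ZZ)$ lifting the functor $(A,M)\mapsto(Q(A),M)$ used in Construction~\ref{monoidalHML}. Recall that $\Alg_{\mathcal{AB}}(\SS,\ZZ)$ was defined by a cartesian square, so a lax symmetric monoidal functor into it is the same datum as a lax symmetric monoidal functor to $\Alg_{\mathcal{AB}}(\Mod_\ZZ)$, one to $\Alg_{\mathcal{A}ss}(\Sp)$, and a lax symmetric monoidal equivalence between the images of $res$ of the former and $\alpha_!$ of the latter in $\Alg_{\mathcal{A}ss}(\Mod_\ZZ)$. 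For these I take, respectively: the lax symmetric monoidal functor $(A,M)\mapsto(Q(A),M)$ of Construction~\ref{monoidalHML}; the functor $(A,M)\mapsto A$ sending a pair to the Eilenberg--MacLane spectrum of the underlying ring, which is lax symmetric monoidal as a composite of the symmetric monoidal ``underlying algebra'' functor, the symmetric monoidal inclusion $\Alg_{\mathcal{A}ss}(\Ab)\hookrightarrow\Alg_{\mathcal{A}ss}(\Mod_\ZZ)$, and the lax symmetric monoidal $\alpha^*$; and the equivalence $\alpha_!(A)=\ZZ\otimes A\simeq Q(A)$ obtained from Theorem~\ref{maintheoQ} by restricting the functors involved from $\Mod_\ZZ^c$ to $\Ab$ and passing to associative algebras. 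Informally, $\Phi(A,M)$ is the triple $(A,\;\ZZ\otimes A\simeq Q(A),\;M)$, where $M$ is viewed as a $Q(A)$-bimodule via the monoidal augmentation $Q(A)\to A$.

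The second step is to compute the two composites of Theorem~\ref{theo : base change} after precomposition with $\Phi$. Since $\overline\alpha_!$ is by construction the projection $\Alg_{\mathcal{AB}}(\SS,\ZZ)\to\Alg_{\mathcal{AB}}(\Mod_\ZZ)$, we have $\overline\alpha_!\circ\Phi\simeq\bigl((A,M)\mapsto(Q(A),M)\bigr)$, hence $\alpha^*\circ\HH_\ZZ\circ\overline\alpha_!\circ\Phi\simeq\HML$ (regarded in $\Sp$, as in the statement of the theorem). For the other composite, unwinding the description $\overline\alpha^*(A,M)=(A,\alpha^*M)$ shows that $\overline\alpha^*\circ\Phi$ sends $(A,M)$ to the pair consisting of the Eilenberg--MacLane spectrum of $A$ together with $M$, the bimodule structure being restricted along the composite $A\to\alpha^*(\ZZ\otimes A)\xrightarrow{\simeq}\alpha^*Q(A)\xrightarrow{\alpha^*(\text{aug})}A$; by the triangle identities for the adjunction $(\alpha_!,\alpha^*)$ this composite is the identity, once one identifies $\text{aug}\colon Q(A)\to A$ under $Q\simeq\ZZ\otimes-$ with the counit $\alpha_!\alpha^*(A)\to A$ (which holds because $\text{aug}$ is the unique lax symmetric monoidal transformation out of $Q$, $Q$ being initial among lax symmetric monoidal functors by Construction~\ref{monoidalHML}). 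Therefore $\HH_\SS\circ\overline\alpha^*\circ\Phi\simeq\THH$, and precomposing the equivalence of Theorem~\ref{theo : base change} with the lax symmetric monoidal functor $\Phi$ gives the desired equivalence $\HML\simeq\THH$ of lax symmetric monoidal functors.

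The main obstacle is not conceptual but the careful bookkeeping required in the two steps above: upgrading Theorem~\ref{maintheoQ} from an equivalence of functors $\Mod_\ZZ^c\to\Mod_\ZZ$ to one compatible with the associative-algebra structures and with the monoidal augmentations, so that the $Q(A)$-bimodule $M$ of Construction~\ref{monoidalHML} restricts along $A\to\ZZ\otimes A\simeq Q(A)$ back to the original $A$-bimodule, and checking that every identification in sight is natural and lax symmetric monoidal. All the needed input is contained in Sections~\ref{section : HH}, \ref{sec : sym mon structure} and \ref{section : sym mon equivalence} together with Appendix~\ref{appendix : monoidal fibration}.
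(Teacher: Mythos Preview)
Your approach is essentially the same as the paper's: both precompose the base change formula of Theorem~\ref{theo : base change} (for $\alpha:\SS\to\ZZ$) with a lax symmetric monoidal functor $\Alg_{\mathcal{AB}}(\Ab)\to\Alg_{\mathcal{AB}}(\SS,\ZZ)$, then identify one composite with $\HML$ and the other with $\THH$ using Theorem~\ref{maintheoQ}.

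The organisational difference is which identification is the nontrivial one. The paper takes for its functor the restriction of the abstract right adjoint $\gamma$ of $\overline\alpha_!$; then $\overline\alpha^*\gamma\simeq\alpha^*$ is formal (composition of right adjoints), giving $\THH$ immediately, and the work goes into showing $\overline\alpha_!\gamma(A,M)\simeq(Q(A),M)$ symmetric monoidally, for which the paper proves a general categorical lemma about cartesian fibrations over pullback squares and invokes Appendix~\ref{appendix : monoidal fibration}. You instead build $\Phi$ by hand from the pullback description, so $\overline\alpha_!\Phi\simeq(Q(-),-)$ is tautological and $\HML$ falls out; your work goes into checking that $\overline\alpha^*\Phi$ returns the original pair $(A,M)$, which you do via the triangle identity after identifying the augmentation $Q\to i$ with the counit $\alpha_!\alpha^*\to\id$ (using initiality of $Q$ among lax symmetric monoidal functors). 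Your route avoids the auxiliary categorical proposition, at the cost of having to assemble the lax symmetric monoidal structure on $\Phi$ from its components and to track the bimodule restriction explicitly; the paper's route gets the lax monoidal structure on $\gamma$ for free from adjunction but pays with the extra lemma. Both are correct, and the remaining ``bookkeeping'' you flag in your last paragraph is exactly the content of the paper's proof of the claim and of Appendix~\ref{appendix : monoidal fibration}.
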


\begin{proof}
Using the morphism of commutative algebras $\alpha: \SS\to\ZZ$, we can construct the category $\Alg_{\mathcal{AB}}(\SS,\ZZ)$ as in Paragraph \ref{subsection : base change formula}. There is a lax symmetric monoidal functor
\[\gamma:\Alg_{\mathcal{AB}}(\Mod_{\ZZ})\to \Alg_{\mathcal{AB}}(\SS,\ZZ)\]
which is right adjoint to the functor $\overline{\alpha}_!$. We still write $\gamma$ for the restriction of this functor to the subcategory $\Alg_{\mathcal{AB}}(\Ab)\subset\Alg_{\mathcal{AB}}(\Mod_{\ZZ})$.

We can precompose the base change formula (Theorem \ref{theo : base change}) with the functor $\gamma$, and we get an equivalence
\begin{equation}\label{eqn : main result}
\THH(\overline{\alpha}^*\gamma(A,M))\simeq \alpha^*\HH_{\ZZ}(\overline{\alpha}_!\gamma(A,M))
\end{equation}
in the category of lax symmetric monoidal functors from $\Alg_{\mathcal{AB}}(\Ab)$ to spectra.

By composition of adjunctions, the composite $\overline{\alpha}^*\gamma$ is the right adjoint to the functor $\overline{\alpha}_!\beta$ which is equal to the functor $\alpha_!:\Alg_{\mathcal{AB}}(\Mod_{\SS})\to\Alg_{\mathcal{AB}}(\Mod_{\ZZ})$. It follows that this composite coincides with the functor $\alpha^*: \Alg_{\mathcal{AB}}(\Mod_{\ZZ})\to\Alg_{\mathcal{AB}}(\Mod_{\SS})$ In particular, the left-hand side of equivalence \ref{eqn : main result} is simply the functor $(A,M)\mapsto\THH(A,M)$

Now, we claim that we have an equivalence
\[\overline{\alpha}_!\gamma(A,M)\simeq(Q(A),M)\]
in the category of lax symmetric monoidal functors from $\Alg_{\mathcal{AB}}(\Ab)$ to $\Alg_{\mathcal{AB}}(\Mod_{\ZZ})$. This claim will be proved at the end of the section. Admitting it for the moment, the right-hand side of equivalence \ref{eqn : main result} can be identified with $\HML(A,M)$ as desired.
\end{proof}

In particular, we deduce the following corollary.

\begin{coro}
There is an equivalence of commutative algebras in $\Sp$
\[\HML(R)\simeq \THH(R)\]
that is functorial in the discrete commutative ring $R$.
\end{coro}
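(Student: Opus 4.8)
The plan is to evaluate the equivalence of lax symmetric monoidal functors supplied by Theorem \ref{theo : main} at the commutative algebra object $(R,R)$ attached to a commutative ring $R$. Concretely, for a discrete commutative ring $R$, regard $R$ as a symmetric $R$-bimodule; I claim the pair $(R,R)$ is naturally a commutative algebra object in the symmetric monoidal category $(\Alg_{\mathcal{AB}}(\Ab),\otimes)$, where $(A,M)\otimes(B,N)=(A\otimes_\ZZ B,M\otimes_\ZZ N)$. The unit $(\ZZ,\ZZ)\to (R,R)$ and the multiplication $(R\otimes_\ZZ R,\,R\otimes_\ZZ R)\to (R,R)$ are prescribed by the unit and the multiplication of $R$ in both coordinates; the only point to verify is that in each coordinate these assemble into a morphism in $\Alg_{\mathcal{AB}}(\Ab)$, i.e.\ that the map on the bimodule part is linear over the map on the algebra part, and this is immediate from commutativity of $R$. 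The associativity, unitality and commutativity axioms for $(R,R)$ then hold coordinatewise because they hold for $R$. Since this construction is manifestly functorial in $R$, it defines a functor from discrete commutative rings to commutative algebra objects of $\Alg_{\mathcal{AB}}(\Ab)$.

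Next I would transport this along the comparison of Theorem \ref{theo : main}. A lax symmetric monoidal functor sends commutative algebra objects to commutative algebra objects, and a symmetric monoidal natural transformation between two such functors induces a natural transformation between the associated functors on commutative algebra objects, which is an equivalence if the transformation is. Applying the lax symmetric monoidal functors $\HML$ and $\THH$ of Theorem \ref{theo : main} to the commutative algebra object $(R,R)$ therefore produces commutative algebras $\HML(R):=\HML(R,R)$ and $\THH(R):=\THH(R,R)$ in $\Sp$, depending functorially on $R$; and the natural equivalence $\HML\simeq\THH$ of lax symmetric monoidal functors, precomposed with $R\mapsto(R,R)$, yields a functorial equivalence $\HML(R)\simeq\THH(R)$ of commutative algebras in $\Sp$, which is the assertion of the corollary.

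The hard work has all been done in Theorem \ref{theo : main}; the present step faces no real obstacle. Step one is an elementary check that uses only commutativity of $R$, and step two is formal, resting on the fact that lax symmetric monoidal functors preserve commutative algebra objects and equivalences between them. The only optional refinement would be to identify the $E_\infty$-structure on $\THH(R)$ obtained in this way with the one usually attached to topological Hochschild homology of a commutative ring (and likewise for the $E_\infty$-structure on $\HML(R)$ coming from the symmetric monoidal structure on $Q$); this is a routine matter and is not needed for the statement as phrased.
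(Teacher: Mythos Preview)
Your proposal is correct and follows exactly the same approach as the paper: one observes that $R\mapsto (R,R)$ defines a functor from discrete commutative rings to $\Alg_{\mathcal{C}om}(\Alg_{\mathcal{AB}}(\Ab))$, and then composes with the equivalence of lax symmetric monoidal functors from Theorem~\ref{theo : main}. Your write-up merely spells out in more detail the points the paper leaves implicit (that lax symmetric monoidal functors and equivalences between them preserve commutative algebra objects), so there is nothing to correct.
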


\begin{proof}
There is an obvious functor
\[\Alg_{\mathcal{C}om}(\Ab)\to \Alg_{\mathcal{C}om}(\Alg_{\mathcal{AB}}(\Ab))\]
that sends a discrete commutative ring $R$ to the pair $(R,R)$. Composing this functor with the equivalence in the Theorem above, we obtain the desired result.
\end{proof}

The rest of this section will be devoted to the unjustified claim in the proof of Theorem \ref{theo : main}. Recall from Construction \ref{monoidalHML} that the lax symmetric monoidal functor $(A,M)\mapsto (Q(A),M)$ is defined via a cartesian lift of $Q(A)\to A$ along 
\[\Fun^{lax}(\Alg_{\mathcal{AB}}(\Ab),\Alg_{\mathcal{AB}}(\Mod_\ZZ))\to \Fun^{lax}(\Alg_{\mathcal{AB}}(\Ab),\Alg_{Ass}(\Mod_\ZZ)).\]
So to prove that statement, we only need to prove that the co-unit $\overline\alpha_!\gamma(A,M)\to (A,M)$ is such a cartesian lift (more precisely, we will prove that it's a cartesian lift of $\ZZ\otimes A\to A$, which is equivalent to what we need by theorem \ref{maintheoQ}). To prove this, we prove first a categorical result. 
\newcommand{\A}{\cat A}
\newcommand{\B}{\cat B}
\newcommand{\C}{\cat C}
\newcommand{\D}{\cat D}
\newcommand{\E}{\cat E}

\begin{prop}
Let $\A,\B,\C,\E$ be categories with a pullback square as follows~: 
\[\xymatrix{\E \ar[r]^p \ar[d]_q & \C \ar[d]^g \\ 
\B\ar[r]_b & \A}\]
Assume that $g$ is a cartesian fibration, and that $b$ has a right adjoint $b^*$.

Then $p$ also has a right adjoint $p^*$, the co-unit $pp^*\to id$ is pointwise $g$-cartesian and $qp^* \simeq b^*g$.
\end{prop}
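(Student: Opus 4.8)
The plan is to construct the right adjoint $p^*$ explicitly, out of $b^*$ and the cartesian transport of $g$, and then to verify the adjunction by a direct mapping-space computation; the two pointwise assertions---that the counit is $g$-cartesian and that $qp^*\simeq b^*g$---will fall out of the construction. Two ingredients are used throughout. First, since $g$ is a cartesian fibration, every morphism $a_0\to g(c)$ of $\A$ admits an essentially unique $g$-cartesian lift with target $c$, and these lifts are functorial, assembling into a cartesian transport functor $\Fun(\Delta^1,\A)\times_\A\C\to\C$, where the fiber product uses evaluation at $1$ on the left and $g$ on the right. Second, the adjunction $b\dashv b^*$ has a counit $\epsilon\colon bb^*\to\id_\A$ with the property that $\beta\mapsto\epsilon_a\circ b(\beta)$ is an equivalence $\map_\B(B,b^*a)\xrightarrow{\sim}\map_\A(b(B),a)$ for all $B$ and $a$. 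We also use that an object of $\E=\B\times_\A\C$ is a triple $(B,C,\alpha\colon b(B)\xrightarrow{\sim} g(C))$, together with the resulting pullback formula for mapping spaces in $\E$.

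Given $c\in\C$, set $a:=g(c)$, choose a $g$-cartesian lift $\tilde\epsilon_c\colon\tilde c\to c$ of the counit $\epsilon_a\colon b(b^*a)\to a$ (so that $g(\tilde c)=b(b^*a)$), and define $p^*(c):=\bigl(b^*g(c),\,\tilde c,\,\id\bigr)\in\E$; functoriality in $c$ is supplied by the cartesian transport functor together with the functoriality of $b^*$. By construction $q\circ p^*=b^*\circ g$, and the maps $\tilde\epsilon_c$ assemble into a natural transformation $\eta\colon p\circ p^*\to\id_\C$ which is pointwise $g$-cartesian. It then remains to prove that $\eta$ exhibits $p^*$ as a right adjoint to $p$, i.e.\ that for all $e\in\E$ and $c\in\C$ the induced map $\map_\E(e,p^*c)\to\map_\C(p(e),c)$ is an equivalence.

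Writing $e=(B,C,\alpha)$ and $a=g(c)$, I would unwind the pullback formula for $\map_\E(e,p^*c)$ and feed in the universal property of the $g$-cartesian map $\tilde\epsilon_c$: this mapping space becomes the space of tuples $(\beta\colon B\to b^*a,\ \psi\colon C\to c,\ \phi\colon g(C)\to b(b^*a))$ equipped with homotopies $\epsilon_a\circ\phi\simeq g(\psi)$ and $b(\beta)\simeq\phi\circ\alpha$, and the comparison map records $\psi$. To see this map is an equivalence I would check that its homotopy fibers are contractible: fixing $\psi$, the pair $(\phi,\ \text{homotopy }b(\beta)\simeq\phi\circ\alpha)$ is determined up to contractible choice by $\beta$, because $\alpha$ is an equivalence and hence so is precomposition with it; this reduces the fiber to the space of pairs consisting of $\beta$ and a homotopy $\epsilon_a\circ b(\beta)\simeq g(\psi)\circ\alpha$, which is a homotopy fiber of the adjunction equivalence $\map_\B(B,b^*a)\xrightarrow{\sim}\map_\A(b(B),a)$, hence contractible. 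Naturality in $e$ and $c$ being immediate, this yields $p\dashv p^*$ and finishes the proof.

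The main obstacle is exactly this last bookkeeping: one must combine the universal property of $g$-cartesian morphisms with the counit description of $b\dashv b^*$ in just the right way so that the two homotopies collapse every fiber to a point. Alternatively, one can avoid constructing $p^*$ as a functor by hand and instead invoke \cite[Proposition 5.2.4.2]{HTT}: by the pasting law for pullbacks one has $\E\times_\C\C_{/c}\simeq\B\times_\A\C_{/c}$ (with $\C_{/c}\to\A$ the composite of the forgetful functor with $g$), and the computation above shows that $\bigl(b^*g(c),\tilde\epsilon_c,\id\bigr)$ is a terminal object there, which at once produces $p^*$, the identity $qp^*\simeq b^*g$, and the pointwise $g$-cartesian counit.
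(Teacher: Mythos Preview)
Your proof is correct and follows essentially the same strategy as the paper's: construct the candidate right adjoint by taking $g$-cartesian lifts of the counit $\epsilon_{g(c)}\colon bb^*g(c)\to g(c)$, then verify the adjunction via the pullback formula for mapping spaces in $\E$ combined with the universal property of cartesian edges and the adjunction equivalence for $b\dashv b^*$. The only cosmetic difference is that the paper packages the functoriality of the cartesian lifts by observing that $g_*\colon\Fun(\C,\C)\to\Fun(\C,\A)$ is itself a cartesian fibration and lifting $\epsilon g$ once and for all, whereas you invoke a cartesian-transport functor; these are the same maneuver, and the subsequent mapping-space computations match step for step.
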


\begin{proof}
Let $\epsilon : bb^*\to id$ denote the co-unit of the adjunction $b\dashv b^*$; and consider $\epsilon g:~bb^*g\to~ g$. This is a morphism in $\Fun(\C,\A)$ and its codomain is $g_*(id_\C)$, so, because
\[g_*:\Fun(\C,\C)\to \Fun(\C,\A)\]
is a cartesian fibration, it has a cartesian lift $\delta \to id$. 

Moreover, $g\delta \simeq bb^*g$ by definition, so we may define a functor $m: \C\to \E$ by the requirements that $pm\simeq \delta$ and $qm \simeq b^*g$. 

We will now prove that $m$ is right adjoint to $p$ and that the co-unit of this adjunction is identified with the lift $pm\simeq \delta \to id$. This will prove all the statements at once. 

Let us denote by $a:\E\to\A$ the composition $a=g\circ p=b\circ q$. Naturally in $e\in \E,c\in \C$, we have
\begin{align}\label{pullbackmap}
\map_\E(e,m(c)) \simeq \map_\C(p(e),pm(c))\times_{\map_\A(a(e),am(c))} \map_\B(q(e),qm(c)).
\end{align}
Observe that, since $pm\simeq \delta$, $\map_\C(p(e),pm(c))\simeq \map_\C(p(e),\delta(c))$ and so by $c$-cartesian-ness of $\delta \to id$, the latter is equivalent to $\map_\C(p(e),c)\times_{\map_\A(a(e),g(c))}\map_\A(g(p(e)),g(\delta(c)))$, where the map on the second factor is just $g_* : \map_\C(p(e),\delta(c))\to \map_\A(g(p(e)),g(\delta(c)))$.

In particular, this is the same as the map $\map_\C(p(e),pm(c))\to \map_\A(a(e),am(c))$ which defines the fiber product in the equivalence \ref{pullbackmap}, so that we actually get 
\[\map_\E(e,m(c)) \simeq \map_\C(p(e),c)\times_{\map_\A(a(e),g(c))}\map_\B(q(e),qm(c)).\]
Note moreover that the map $\map_\B(q(e),qm(c))\to \map_\A(a(e),g(c))$ is the composite 
\[\map_\B(q(e),qm(c)) \xrightarrow{b_*} \map_\A(a(e),am(c)) \to \map_\A(a(e),g(c)),\]
where the second arrow is given by postcomposition with $am(c) \simeq g\delta(c)\xrightarrow{g(\delta(c) c) } g(c)$.

In particular, using the definition of $\delta\to id$, this is identified with $b^*bg(c) \xrightarrow{\epsilon_{g(c)}} g(c)$ (naturally in $c$), and so the map $\map_\B(q(e),qm(c))\to \map_\A(a(e),g(c))$ is, all in all, identified with 
\[\map_\B(q(e),b^*bg(c)) \xrightarrow{b_*} \map_\A(a(e),bb^*bg(c)) \to \map_\A(a(e),bg(c)) = \map_\A(b q(e), bg(c))\]
which is known to be an equivalence, because it's the natural adjunction equivalence between $b$ and $b^*$.

In particular, we get a natural map $\map_\E(e,m(c)) \to \map_\C(p(e),c)$ which is an equivalence. Tracing through the equivalences, we see that this natural map can be described as
\[\map_\E(e,m(c))\xrightarrow{p_*} \map_\C(p(e),pm(c)) = \map_\C(p(e),\delta(c)) \xrightarrow{\delta(c)\to c} \map_\C(p(e),c).\]
This is precisely the claim that $p\dashv m$ and that the co-unit is identified with $\delta\to id$, which is what we wanted.
\end{proof}

We can now finally give the proof of the claim.

\begin{proof}[Proof of the claim]
Recall that we have the following pullback square
\[
\xymatrix{\Alg_{\mathcal{AB}}(\SS,\ZZ) \ar[r]^{\overline\alpha_!} \ar[d]^\kappa & \Alg_{\mathcal{AB}}(\Mod_\ZZ) \ar[d]^{res} \\ 
\Alg_{Ass}(\Sp)\ar[r]_{\alpha_!} & \Alg_{Ass}(\Mod_\ZZ)}
\]
and $\alpha^*$ is right adjoint to $\alpha_!$, $res$ is a cartesian fibration. $\gamma$ is right adjoint to $\overline\alpha_!$, so it follows from the previous proposition that the co-unit $\overline\alpha_!\gamma(A,M)\to (A,M)$ is $res$-cartesian, which means that the bimodule component of $\overline\alpha_!\gamma(A,M)$ is just the pullback of $M$ along $res\circ~\overline\alpha_!\gamma(A,M)\to~A$.

But now, $res\circ \overline\alpha_! \gamma \simeq \alpha_! \kappa \gamma$, and by the above, $\kappa\gamma \simeq \alpha^*\circ res$, so 
\[res\circ \overline\alpha_!\gamma(A,M) \simeq \alpha_!\alpha^*A\simeq \ZZ\otimes A\]
and the map $\alpha_!\alpha^*A\to A$ is just the co-unit of $\alpha_!\dashv \alpha^*$. 

This is not quite symmetric monoidal so far, but using Appendix B, we can make this identification symmetric monoidal. Indeed, the co-unit $\overline\alpha_!\gamma(A,M)\to (A,M)$ is a pointwise cartesian morphism which is also a transformation of lax symmetric monoidal functors, so it suffices to prove that $res\circ \overline\alpha_!\gamma (A,M)\simeq \ZZ\otimes A$ is a symmetric monoidal identification. 

But now the identification $res\circ \overline\alpha_!\gamma\simeq \alpha_!\alpha^*res$ relies on the identification $\kappa\gamma \simeq \alpha^*res$, so we just need to show that we can make this symmetric monoidal. 

Let us use the notations of the previous proof. By Theorem \ref{cartesianfib}, we can chose $\delta \to id$ to be a symmetric monoidal transformation of lax symmetric monoidal functors; and we can thus also make $m$ lax symmetric monoidal with, by definition, a symmetric monoidal identification $qm \simeq b^*g$ (here, $\kappa m \simeq \alpha^*\circ res$) and $pm\simeq \delta$ (note that the forgetful functor from symmetric monoidal categories to categories preserves pullbacks).

Therefore, what we need to do is show that the equivalence $\kappa m \simeq \kappa \gamma$ (the identification that comes from the fact that both $\gamma$ and $m$ are left adjoints to $\overline\alpha_!$) can be made symmetric monoidal. But this identification can be seen as the composite $m\to \gamma\overline\alpha_! m \to \gamma$ where the first map is the unit of $\overline\alpha_! \dashv \gamma$, and the second map is the co-unit of $\overline\alpha_!\dashv m$, which is identified with $\delta \to id$ and is therefore symmetric monoidal. So then $m$ is equivalent to $\gamma$ as symmetric monoidal functors (where $m$ is this ``new'' adjoint that we defined in the proof), and so the whole identification $res\circ \overline\alpha_!\gamma(A,M)\simeq \ZZ\otimes A$ can be made symmetric monoidally. By cartesian-ness, it follows that we have an equivalence
\[\overline\alpha_!\gamma(A,M)\simeq (Q(A),M)\]
of symmetric monoidal functors of $(A,M)$, which is what was claimed. 
\end{proof}

\appendix
\section{On a remark in \cite{FPSVW}}\label{appendix : remark}

In this appendix, we make a remark about remark (3.9) in \cite{FPSVW}. Theorem \ref{maintheoQ} implies that for any ring $R$, we have an equivalence of $\ZZ$-algebras $Q(R)\simeq \ZZ\otimes R$. In particular $Q(\ZZ)\simeq \ZZ\otimes \ZZ$ and so $Q(\ZZ)\otimes_\ZZ R \simeq \ZZ\otimes R$. 

Note, however, that in this second equivalence, the $\ZZ$-algebra structure on $\ZZ\otimes R$ comes from the $R$-factor, not the $\ZZ$-factor; whereas in the equivalence $Q(R)\simeq \ZZ\otimes R$, the $\ZZ$-algebra structure on $\ZZ\otimes R$ comes from the $\ZZ$-factor.

In particular, our main result does \textit{not} imply that $Q(R)\simeq Q(\ZZ)\otimes_\ZZ R$ as $\ZZ$-algebras in general. This formula is known to hold in some special cases, e.g. when $R$ is a generalized monoid algebra (this is for instance given as exercise E.13.4.1. in \cite{loday}).

In fact, the existence of an equivalence $Q(R)\simeq Q(\ZZ)\otimes_\ZZ R$ is equivalent to requiring that the two natural $\ZZ$-algebra structures on $\ZZ\otimes R$ are equivalent. The ``recent calculations'' that are mentioned in that remark are therefore probably related to the latter question, but in fact their inequivalence does not contradict the conjecture. 

We now give an example to show that the two $\ZZ$-algebra structures on $\ZZ\otimes R$ are not always equivalent.

\begin{prop}
The two natural $\ZZ$-algebra structures on $\ZZ\otimes \FF_2$ are not equivalent. 
\end{prop}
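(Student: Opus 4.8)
The plan is to distinguish the two structures by base-changing them along $\ZZ\to\FF_2$ and comparing the resulting homotopy rings. Write $A_L$ for $\ZZ\otimes\FF_2$ equipped with the $\ZZ$-algebra structure coming from the $\ZZ$-factor; thus $A_L=\ZZ\otimes_{\SS}\FF_2$ is the base change of the $\SS$-algebra $\FF_2$ along $\SS\to\ZZ$. Write $A_R$ for the same underlying ring equipped with the structure coming from the $\FF_2$-factor, i.e.\ the composite $\ZZ\to\FF_2\xrightarrow{\,y\mapsto 1\otimes y\,}\ZZ\otimes\FF_2$. Since base change $(-)\otimes_\ZZ\FF_2$ is a functor from $A_\infty$-algebras in $\Mod_\ZZ$ to $A_\infty$-algebras in $\Mod_{\FF_2}$, an equivalence $A_L\simeq A_R$ of the former would produce an equivalence $A_L\otimes_\ZZ\FF_2\simeq A_R\otimes_\ZZ\FF_2$ of the latter, hence an isomorphism of graded homotopy rings. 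The plan is to show no such isomorphism exists.

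The key observation is that $A_R$ is in fact an $\FF_2$-algebra — its structure map $\ZZ\to A_R$ factors through the ring map $\ZZ\to\FF_2$ — whereas $A_L$ is not. For $A_R$: the standard identity $\FF_2\otimes_\ZZ A_R\simeq(\FF_2\otimes_\ZZ\FF_2)\otimes_{\FF_2}A_R$ together with the Künneth isomorphism over the field $\FF_2$ gives $\pi_*(A_R\otimes_\ZZ\FF_2)\cong\pi_*(\FF_2\otimes_\ZZ\FF_2)\otimes_{\FF_2}\pi_*(A_R)$ as graded rings; the cofiber sequence $\ZZ\xrightarrow{2}\ZZ\to\FF_2$ of $\ZZ$-modules shows $\FF_2\otimes_\ZZ\FF_2\simeq\FF_2\oplus\Sigma\FF_2$ with homotopy ring $\FF_2[x]/(x^2)$, $|x|=1$ (the square lands in $\pi_2=0$), and since $\pi_0(A_R)=H_0(H\ZZ;\FF_2)=\FF_2\neq 0$ the element $x\otimes 1$ is a nonzero square-zero class. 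For $A_L$: by transitivity of base change $A_L\otimes_\ZZ\FF_2\simeq\FF_2\otimes\FF_2$, whose homotopy ring is Milnor's mod-$2$ dual Steenrod algebra $\FF_2[\xi_1,\xi_2,\dots]$, a polynomial algebra, hence reduced. As $\pi_*(A_L\otimes_\ZZ\FF_2)$ is reduced while $\pi_*(A_R\otimes_\ZZ\FF_2)$ is not, the two $\ZZ$-algebra structures are inequivalent. (Equivalently: if $A_L$ were an $\FF_2$-algebra the same computation would force $\FF_2[\xi_1,\xi_2,\dots]\cong \FF_2[x]/(x^2)\otimes_{\FF_2}\pi_*(A_L)$, contradicting reducedness.)

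Everything to be filled in is routine: checking that, under the identifications of Sections \ref{section : sym mon equivalence} and \ref{section : conclusion}, the two structures really are $A_L$ and $A_R$ as described (a bookkeeping check about which smash factor of $\ZZ\otimes\FF_2$ carries which module structure); the transitivity/associativity identities $A_L\otimes_\ZZ\FF_2\simeq\FF_2\otimes\FF_2$ and $\FF_2\otimes_\ZZ B\simeq(\FF_2\otimes_\ZZ\FF_2)\otimes_{\FF_2}B$ for an $\FF_2$-algebra $B$; the computation $\pi_*(\FF_2\otimes_\ZZ\FF_2)=\FF_2[x]/(x^2)$ from the cofiber sequence; the multiplicativity of the Künneth isomorphism over $\FF_2$; and citing Milnor's description of the dual Steenrod algebra. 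I do not expect any genuine obstacle here — the only thing one must be careful about is that the invariant $\pi_*\big((-)\otimes_\ZZ\FF_2\big)$ is computed with the correct $\ZZ$-module structure in each of the two cases, which is exactly the subtlety that makes $A_L$ and $A_R$ inequivalent despite having the same underlying ring (and even the same Poincaré series after base change).
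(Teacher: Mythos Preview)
Your proof is correct and follows essentially the same strategy as the paper: base-change both $\ZZ$-algebra structures along $\ZZ\to\FF_2$, identify one side with $\FF_2\otimes\FF_2$ (the dual Steenrod algebra, a polynomial ring with no nilpotents) and exhibit a nonzero square-zero class on the other side coming from $\FF_2\otimes_\ZZ\FF_2\simeq\FF_2[\epsilon]/(\epsilon^2)$. The only minor difference is in how the nilpotent is shown to survive: you invoke the K\"unneth isomorphism over $\FF_2$ to compute $\pi_*(A_R\otimes_\ZZ\FF_2)\cong\FF_2[x]/(x^2)\otimes_{\FF_2}\pi_*(A_R)$, whereas the paper simply observes that the unit map $\FF_2\otimes_\ZZ\FF_2\to\ZZ\otimes(\FF_2\otimes_\ZZ\FF_2)$ is split injective on $\pi_1$.
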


\begin{rem}
Here, the claim is not that the identity cannot be lifted to a $\ZZ$-algebra map, but that there is no ``abstract'' equivalence of $\ZZ$-algebras between the two. 
\end{rem}

\begin{proof}
Suppose they were equivalent. Then tensoring over $\ZZ$ with $\FF_2$ would also yield equivalent algebras, i.e. we would have an equivalence of $\ZZ$-algebras~: 
\[\FF_2\otimes \FF_2 \simeq \ZZ\otimes (\FF_2\otimes_\ZZ \FF_2)\]
As an associative algebra, we have $\FF_2\otimes_\ZZ \FF_2\simeq \FF_2[\epsilon]$ with $|\epsilon|=1, \epsilon^2= 0$, in particular, as spectra $\FF_2\otimes_\ZZ \FF_2 \simeq \FF_2\oplus \Sigma \FF_2$ and so the inclusion $\FF_2\otimes_\ZZ \FF_2\to \ZZ\otimes(\FF_2\otimes_\ZZ \FF_2)$ is injective on $\pi_1$. In particular, the right-hand side has a nonzero nilpotent class in $\pi_1$. On the other hand, the left-hand side has the dual mod-$2$ Steenrod algebra as its homotopy groups which is a polynomial algebra (main result of \cite{milnor}) and so has no nonzero nilpotents. The result follows.
\end{proof}

\begin{rem}
A similar result holds for $\FF_p$, with $p$ odd. In that case, any class in odd degree squares to zero, so the argument does not immediately work. However, one can argue as follows. By naturality, and for degree reasons the $p$th Massey power of $\epsilon$ must vanish in the right-hand side, whereas it does not in the dual mod-$p$ Steenrod algebra. Massey products only depend on the $E_1$-structure of the algebras, so the result follows as well. 
\end{rem}

\section{Monoidal fibrations}\label{appendix : monoidal fibration}

\newcommand{\Fin}{\mathrm{Fin}}
This appendix is devoted to proving the following theorem: 
\begin{theo}\label{cartesianfib}
Let $\C^\otimes,\D^\otimes,\E^\otimes$ be symmetric monoidal categories, $p^\otimes : \D^\otimes\to \E^\otimes$ a symmetric monoidal functor such that the underlying functor $p: \D\to \E$ is a Cartesian fibration.

Then $\Fun^{lax}(\C,\D)\to \Fun^{lax}(\C,\E)$ is a Cartesian fibration too and the Cartesian morphisms are precisely the ones that are pointwise Cartesian. 
\end{theo}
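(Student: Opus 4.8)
The plan is to use the standard unstraightening/straightening philosophy together with the fact that lax symmetric monoidal functors are exactly sections of certain fibrations over $\Fin_*$, and to reduce the statement to a pointwise criterion. Recall that a symmetric monoidal category $\C^\otimes$ is a cocartesian fibration $\C^\otimes \to \Fin_*$, and a lax symmetric monoidal functor $F\colon \C\to \D$ is the data of a functor $\C^\otimes \to \D^\otimes$ over $\Fin_*$ preserving inert morphisms. Thus $\Fun^{lax}(\C,\D)$ sits inside $\Fun_{/\Fin_*}(\C^\otimes,\D^\otimes)$ as the full subcategory on those functors preserving inert edges, and similarly for $\E$. Since $p^\otimes$ is a map over $\Fin_*$ and $p^\otimes$ preserves inert edges (being symmetric monoidal), postcomposition with $p^\otimes$ carries inert-preserving functors to inert-preserving functors, so the functor $\Fun^{lax}(\C,\D)\to \Fun^{lax}(\C,\E)$ is the restriction of $\Fun_{/\Fin_*}(\C^\otimes,-)$ applied to $p^\otimes$.

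First I would treat the case of plain functor categories over a base: if $p\colon \D\to \E$ is a cartesian fibration, then for any category $\A$, the induced functor $\Fun(\A,\D)\to \Fun(\A,\E)$ is a cartesian fibration, and a natural transformation is cartesian iff it is pointwise cartesian. This is standard (it follows from \cite[Proposition 3.1.2.1]{HTT}, or can be deduced from the description of cartesian edges in functor categories). Applying this with $\A = \C^\otimes$ and $p = p^\otimes$ (the underlying functor of $p^\otimes$, which is cartesian by hypothesis, though we must be slightly careful: the relevant statement is over $\Fin_*$, so I would either work with $\Fun_{/\Fin_*}(\C^\otimes,-)$ directly, noting that $\C^\otimes\to\Fin_*$ is a cocartesian fibration and relative functor categories behave well, or observe that the fibre of $\Fun(\C^\otimes,\D^\otimes)\to\Fun(\C^\otimes,\E^\otimes)\times_{\Fun(\C^\otimes,\Fin_*)}\Fun(\C^\otimes,\Fin_*)$ over the structure map is what we want). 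The upshot of this step is that $\Fun_{/\Fin_*}(\C^\otimes,\D^\otimes)\to \Fun_{/\Fin_*}(\C^\otimes,\E^\otimes)$ is a cartesian fibration with pointwise-cartesian edges as its cartesian edges.

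The second and main step is to check that this cartesian fibration restricts to one on the full subcategories of inert-preserving functors — i.e. that $\Fun^{lax}(\C,\D)\to \Fun^{lax}(\C,\E)$ is still a cartesian fibration and still has the same class of cartesian morphisms. The content here is: given $G\in \Fun^{lax}(\C,\E)$ and a lift $\tilde G$ of $G$ to $\Fun^{lax}(\C,\D)$, and given a lax symmetric monoidal $H\to G$ in $\Fun^{lax}(\C,\E)$ (a natural transformation through lax monoidal functors, i.e. a morphism in the subcategory), the pointwise-cartesian lift $\tilde H\to \tilde G$ computed in the larger functor category is automatically again a lax symmetric monoidal functor, i.e. preserves inert morphisms. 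This is the crux: one must show that $\tilde H\colon \C^\otimes\to\D^\otimes$ sends inert edges to inert edges. The hard part will be precisely this verification — one wants to use that $p^\otimes$ detects inertness in a suitable sense. Concretely, an edge of $\D^\otimes$ is inert iff it lies over an inert edge of $\Fin_*$ and is cocartesian; since $\tilde H$ lies over $\Fin_*$ correctly (inherited from $\tilde G$ and from the base), it suffices to see that $\tilde H$ of an inert edge is $p^\otimes$-cocartesian. Here I would argue by factoring: the pointwise cartesian edge $\tilde H(x)\to\tilde H(y)$ lying over an inert edge $x\to y$ in $\C^\otimes$ maps under $p^\otimes$ to the inert edge $H(x)\to H(y)$ in $\E^\otimes$ (since $H$ is lax monoidal), and one uses the interplay between $p$-cartesian edges and the factorization of $\tilde H(x)\to\tilde H(y)$ through the $p$-cartesian lift of $H(x)\to H(y)$ with source $\tilde H(x)$; comparing this with the corresponding factorization for $\tilde G$ (which preserves inerts) and using naturality and the pointwise-cartesian property, one extracts that the comparison map is an equivalence, hence $\tilde H(x)\to\tilde H(y)$ is itself the composite of a $p$-cartesian and a fibrewise-cocartesian edge over an inert base edge, which is inert. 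I would organize this around \cite[Proposition 2.4.1.7]{HTT} on compositions and cancellation of cartesian edges. Once this is established, the restricted functor is a cartesian fibration whose cartesian edges are exactly the pointwise-cartesian ones (as these are detected in the ambient category and the ambient cartesian edges restrict), completing the proof.
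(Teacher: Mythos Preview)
Your overall strategy matches the paper's: realize $\Fun^{lax}(\C,\D)$ as the full subcategory of $\Fun_{/\Fin_*}(\C^\otimes,\D^\otimes)$ on inert-preserving functors, establish a cartesian fibration on the ambient functor categories with pointwise-cartesian morphisms, then check that inert-preservation is stable under pointwise-cartesian lifts.

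There is, however, a genuine gap in your first step. You apply the standard lemma (``cartesian fibrations induce cartesian fibrations on functor categories'') with $p^\otimes:\D^\otimes\to\E^\otimes$ in the role of the cartesian fibration, but the hypothesis only says that the fibre $p:\D\to\E$ over $\langle 1\rangle$ is cartesian, not $p^\otimes$ itself. In general $p^\otimes$ is \emph{not} a cartesian fibration: edges of $\E^\otimes$ lying over non-identity maps in $\Fin_*$ need not admit $p^\otimes$-cartesian lifts (think of an active map). Your parenthetical ``we must be slightly careful'' flags this without resolving it, and your second step's proposed factorization of $\tilde H(x)\to\tilde H(y)$ through a $p$-cartesian lift of $H(x)\to H(y)$ runs into the same problem, since that edge lies over a nontrivial inert map. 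The paper's fix is the key missing input: since $p^\otimes_{\langle n\rangle}\simeq p^n$ is cartesian fibrewise and inert transition functors $\alpha_*:\D^\otimes_{\langle n\rangle}\to\D^\otimes_{\langle m\rangle}$ are projections (hence preserve fibrewise-cartesian edges), the dual of \cite[4.3.1.15]{HTT} shows a $p^\otimes_{\langle n\rangle}$-cartesian edge is already $p^\otimes$-cartesian in $\D^\otimes$. This is enough because every natural transformation of functors over $\Fin_*$ has components over identities in $\Fin_*$.

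Once that is in place, your second step simplifies considerably. Rather than juggling factorizations through cocartesian lifts, the paper argues directly: for an inert $f:x\to y$ over $\alpha$, the square
\[
\xymatrix{
\alpha_*\tilde H(x)\ar[r]\ar[d] & \tilde H(y)\ar[d]\\
\alpha_*\tilde G(x)\ar[r]^\simeq & \tilde G(y)
}
\]
has both vertical maps $p^\otimes_{\langle m\rangle}$-cartesian (since $\alpha_*$ preserves such edges) and the bottom an equivalence (since $\tilde G$ preserves inerts); projecting to $\E^\otimes$ the top becomes an equivalence too, and cartesianness forces the top to be an equivalence in $\D^\otimes$.
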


\begin{proof}
$p:\D\to \E$ is a Cartesian fibration, therefore for each $\langle n\rangle \in \Fin_*$, $p^\otimes_{\langle n \rangle}:\D^\otimes_{\langle n\rangle}\to \E^\otimes_{\langle n\rangle}$, which is identified with $\D^n\to \E^n$, is also one. 

Moreover, for the same reason, for any inert map $\alpha :\langle n \rangle \to \langle m \rangle$, the induced $\alpha_* :~\D^\otimes_{\langle n \rangle}\to~ \D^\otimes_{\langle m\rangle}$ sends $p^\otimes_{\langle n\rangle}$-Cartesian edges to $p^\otimes_{\langle m\rangle}$-Cartesian ones. 

Now, noting that Cartesian edges are a form of relative limit, the dual of \cite[4.3.1.15.]{HTT} implies that an edge in $\D^\otimes_{\langle n\rangle}$ is a $p^\otimes_{\langle n\rangle}$-Cartesian edge in $\D^\otimes_{\langle n\rangle}$ if and only if it is $p^\otimes$-Cartesian as an edge of $\D^\otimes$. 

In particular, $p^\otimes: \D^\otimes \to \E^\otimes$ admits Cartesian lifts for all morphisms lying over $id_{\langle n\rangle}$ (although it may not be a Cartesian fibration)

Therefore $\Fun_{\Fin_*}(\C^\otimes,\D^\otimes)\to \Fun_{\Fin_*}(\C^\otimes,\E^\otimes)$ is a Cartesian fibration. Indeed suppose you have a natural transformation $\eta$ of functors $\C^\otimes \to \E^\otimes$ lying over $\Fin_*$. Then each $\eta_c$ lies over $id_{\langle n\rangle}$, and so has a $p^\otimes$-Cartesian lift. It follows that $\eta$ itself has a Cartesian lift.

By base change, $\Fun^{lax/E}(\C^\otimes,\D^\otimes)\to \Fun^{lax}(\C^\otimes,\E^\otimes)$ is a Cartesian fibration, where $\Fun^{lax/E}(\C^\otimes,\D^\otimes)$ denotes the category of functors $\C^\otimes\to \D^\otimes$ over $\Fin_*$ whose projection to $\E^\otimes$ preserves inert edges. 

It therefore finally suffices to show that if $F\to G$ is Cartesian and $G$ preserves inert edges, then $F$ does so too (knowing that the projection of $F$ also preserves inert edges).

So let $f:x\to y$ be an inert edge in $\C^\otimes$, say it lies over $\alpha: \langle n \rangle \to \langle m \rangle$ in $\Fin_*$. Then $\alpha_*F(x)\to \alpha_*G(x)$ is $p^\otimes_{\langle m\rangle}$-Cartesian because $\alpha_*$ sends $p^\otimes_{\langle n \rangle}$-Cartesian edges to $p^\otimes_{\langle m \rangle}$-Cartesian ones; and so is $F(y)\to G(y)$. Moreover, since $G$ preserves inert edges, the canonical map $\alpha_*G(x)\to G(y)$ is an equivalence.

The following square commutes: 
\[
\xymatrix{\alpha_*F(x)\ar[r]\ar[d] & F(y) \ar[d]\\
\alpha_*G(x) \ar[r]^\simeq & G(y)}
\]

and both vertical arrows are $p^\otimes_{\langle m\rangle}$-cartesian. 

If we project this to $\E^\otimes$, we get a commutative square where both horizontal arrows are equivalences, it follows that the top horizontal arrows must also be an equivalence; which means precisely that $F(x)\to F(y)$ is inert and this is what we wanted to prove.  
\end{proof}
\begin{rem}
If we add the condition that $p$-cartesian edges are stable under tensor product, then one can prove that $\Fun^\otimes(\C,\D)\to \Fun^\otimes(\C,\E)$ is a Cartesian fibration; but the version here is all we need. 
\end{rem}
\bibliographystyle{alpha}
\bibliography{Biblio}
\end{document}